\documentclass{article}
\usepackage[utf8]{inputenc}
\usepackage{amsmath}
\usepackage{amssymb}
\usepackage{amsthm}
\usepackage{enumitem}

\newtheorem{corollary}{Corollary}
\newtheorem{lemma}{Lemma}
\newtheorem{remark}{Remark}
\newtheorem{example}{Example}
\newtheorem{proposition}{Proposition}
\newtheorem{definition}{Definition}
\newtheorem{conjecture}{Conjecture}
\newtheorem{theorem}{Theorem}

\DeclareMathOperator{\toPalCouple}{toPalCpl}
\DeclareMathOperator{\Suffix}{Suf} %
\DeclareMathOperator{\Prefix}{Prf} %
\DeclareMathOperator{\Factor}{Fac} %
\DeclareMathOperator{\Pal}{Pal} %
\DeclareMathOperator{\ctPC}{ctPC} %
\DeclareMathOperator{\OrdFactor}{OrdFac} 
\DeclareMathOperator{\Spread}{Spread}

\DeclareMathOperator{\NPSCover}{NPSCover}
\DeclareMathOperator{\minNPSCov}{minNPSCov}
\DeclareMathOperator{\separate}{separate}
\DeclareMathOperator{\NPP}{NPP} 
\DeclareMathOperator{\width}{width}
\DeclareMathOperator{\height}{height}

\DeclareMathOperator{\mper}{mper}
\DeclareMathOperator{\order}{order}

\DeclareMathOperator{\PL}{PL}
\DeclareMathOperator{\PPL}{PPL}
\DeclareMathOperator{\Close}{Close}

\DeclareMathOperator{\bottom}{bottom}

\DeclareMathOperator{\maxPL}{maxPL}
\DeclareMathOperator{\maxPPL}{maxPPL}
\DeclareMathOperator{\Mirror}{Mirror}
\DeclareMathOperator{\add}{add}
\DeclareMathOperator{\PalExt}{PalExt}

\DeclareMathOperator{\Run}{Run}
\DeclareMathOperator{\UC}{UC} 

\DeclareMathOperator{\RunPal}{RunPal}
\DeclareMathOperator{\minRunPal}{minRunPal}
\DeclareMathOperator{\Period}{Period}
\DeclareMathOperator{\FirmPalPrefix}{FirmPalPrf}

\DeclareMathOperator{\pToRun}{pToRun}
\DeclareMathOperator{\perProlong}{perPrl}
\DeclareMathOperator{\CovPalAll}{cpAll}
\DeclareMathOperator{\CovPalAllLeft}{cpAllLeft}
\DeclareMathOperator{\CovPalAllRight}{cpAllRight}
\DeclareMathOperator{\CovPalEdgeLeft}{cpEL}
\DeclareMathOperator{\CovPalEdgeRight}{cpER}
\DeclareMathOperator{\CovPalEdge}{cpE}
\DeclareMathOperator{\CovPalCmd}{cpELCmd}

\DeclareMathOperator{\PalCouple}{PalCpl}
\DeclareMathOperator{\NestPer}{NPS} 
\DeclareMathOperator{\diam}{diam} %
\DeclareMathOperator{\Cut}{Cut} %
\DeclareMathOperator{\pad}{pad}

\newcommand{\Cover}{V}
\newcommand{\BB}{B} 
\newcommand{\EE}{E} 
\newcommand{\zz}{z}
\newcommand{\bb}{b} %
\newcommand{\ww}{w}
\newcommand{\kk}{k}

\newcommand{\xx}{x}
\newcommand{\YY}{Y}
\newcommand{\hh}{h} %

\begin{document}

\title{Palindromic length of infinite aperiodic words}

\author{Josef Rukavicka\thanks{Department of Mathematics,
Faculty of Nuclear Sciences and Physical Engineering, Czech Technical University in Prague
(josef.rukavicka@seznam.cz).}}

\date{\small{October 16, 2024}\\
   \small Mathematics Subject Classification: 68R15}

\maketitle

\begin{abstract}
The palindromic length of the finite word $v$ is equal to the minimal number of palindromes whose concatenation is equal to $v$. It was conjectured in 2013 that for every infinite aperiodic word $x$, the palindromic length of its factors is not bounded.
We prove this conjecture to be true.
\end{abstract}

\section{Introduction}

Let \(u=u_1u_2\cdots u_n\) be a nonempty word of length \(n\), where \(u_i\) are letters and \(i\in\{1,2,\dots,n\}\). We say that \(u\) is a \emph{palindrome} if \(u_1u_2\cdots u_n=u_nu_{n-1}\cdots u_1\). For example, the words ``noon'' and ``level'' are palindromes. 
The \emph{palindromic length} \(\PL(u)\) of the finite nonempty word \(u\) is equal to the minimal number \(\kk\) such that \(u=p_1p_2\cdots p_{\kk}\), where \(p_i\) are palindromes and \(i\in \{1,2,\dots,\kk\}\). 

Let \(\xx\) be an infinite word. If there are finite words \(u,v\) such that \(\xx=uv^{\infty}\) then we say that \(\xx\) is \emph{ultimately periodic}. If \(\xx\) is not ultimately periodic then we say that \(\xx\) is \emph{aperiodic}.
In 2013, Frid, Puzynina, and Zamboni conjectured that \cite{FrPuZa}:
\begin{conjecture}
\label{tue8eiru883}
If \(\xx\) is an infinite word and \(\kk\) is an integer such that \(\PL(u)\leq \kk\) for every factor \(u\) of \(\xx\) then \(\xx\) is ultimately periodic. 
\end{conjecture}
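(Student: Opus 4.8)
The plan is to argue by contraposition: assume $\xx$ is aperiodic and show the palindromic length of its factors is unbounded. Fix a hypothetical bound $\kk$ and suppose $\PL(u)\le\kk$ for every factor $u$ of $\xx$; I will derive that $\xx$ is ultimately periodic. The natural strategy is to study, for each prefix (or more generally each factor ending at a fixed position), an \emph{optimal palindromic factorization} — a decomposition into at most $\kk$ palindromes — and to track how these factorizations evolve as the window grows. Because the number of blocks is capped by $\kk$, some block must absorb an unbounded amount of length; the key structural fact one wants is that a very long palindromic block forces a long periodic segment in $\xx$ (a palindrome of length $m$ together with overlapping palindromic structure around it yields a period that is small relative to $m$). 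One then needs a pumping/compactness argument: infinitely many windows share, up to the relevant combinatorial data, the ``same'' factorization pattern, and the recurring long periodic segments must align to force global ultimate periodicity.

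Concretely, the steps I would carry out are as follows. First, set up bookkeeping for palindromic factorizations with $\le\kk$ parts, including how a factorization of $u$ relates to a factorization of a one-letter extension $ua$ — this is where a notion like $\PalExt$ / $\maxPL$ and the ``palindromic length does not drop too fast'' phenomenon enters. Second, prove the local lemma: if a factor $u$ of $\xx$ has an optimal factorization in which one palindrome $p$ is long (length $\ge f(\kk,|u|)$ for a suitable slowly-growing threshold), then $\xx$ contains, at the corresponding location, a factor with period at most roughly $|p|/2$, i.e. a long run. This typically uses the interaction of two palindromes sharing a large overlap (a classical fact: if $w$ has palindromic suffixes of lengths $i<j$ with $2i>j$... more precisely, if a long palindrome sits inside a word and is flanked by palindromic pieces, periodicity is forced). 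Third, use a Ramsey/pigeonhole argument over the infinitely many positions of $\xx$: either all factors admit optimal factorizations with only ``short'' palindromes — which bounds the factor complexity and, by the Morse–Hedlund theorem, already gives ultimate periodicity — or arbitrarily long palindromes appear in optimal factorizations arbitrarily far out, producing arbitrarily long runs with bounded period (bounded because a long run with a fresh period would itself raise palindromic length); recurrence of $\xx$ then forces a single period $\pp$ to reappear cofinally with overlaps, and a de Bruijn–type gluing shows $\xx=uv^{\infty}$.

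The main obstacle — and the technical heart of the argument — is the second step combined with making the pigeonhole in the third step actually yield a \emph{single} global period rather than a family of long-but-unrelated periodic segments. Controlling how an optimal factorization of $u$ degrades or rearranges under extension (so that ``the long block stays long and stays in place'' across many window sizes) is delicate: a priori the optimal factorization can reshuffle entirely when one letter is appended. I expect to need a monotonicity or amortization argument — e.g. tracking a potential function on factorizations, or using the fact proved earlier that $\PL$ of prefixes of length $n$ grows (even if slowly) to guarantee that long palindromic blocks are not merely transient — together with Fine–Wilf to merge the periods of overlapping long runs into one. Once a long run of period $\pp$ is shown to recur with overlap bigger than $\pp$ at cofinally many positions, ultimate periodicity of $\xx$ with period $\pp$ follows, completing the contrapositive and hence the conjecture.
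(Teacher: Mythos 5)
Your proposal outlines a direct approach---find a single global period and conclude ultimate periodicity---but this is not what the paper does, and as sketched it has real gaps that are not incidental details but rather the exact obstacles that blocked this line of attack for a decade.

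The central missing step is your ``local lemma'': that a long palindromic block $p$ in an optimal factorization forces a long periodic segment of period about $|p|/2$. As stated this is false---a palindrome of arbitrary length can have minimal period larger than half its length (the paper's non-periodic palindromes, $\order(p)<2$, are exactly such). Periodicity only emerges from the \emph{interaction} of nested or overlapping palindromes (via Lemma~\ref{id8ieubmzmfj} or Lemma~\ref{tudjkdi8545}), and that interaction is precisely what is hard to control. Your second gap, which you acknowledge, is that an optimal $\le\kk$-term factorization of $u$ can reorganize completely when a letter is appended, so the ``long block stays long and stays in place'' hope has no proof behind it; this is the known stumbling block in the factorization-tracking approach used in \cite{FrPuZa} and \cite{FRID2018202}, which is why those papers only treat special classes ($\alpha$-power-free, Sturmian). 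Your proposed dichotomy also does not hold as stated: if all palindromic blocks in all optimal factorizations were short, factors would have bounded length, not merely bounded complexity, so one branch is vacuous and the real work all lies in the other branch. Finally, ``Fine--Wilf plus recurrence gives a single cofinal period'' is a sizable leap: recurrence of a long run does not by itself produce overlapping occurrences, and without overlap Fine--Wilf gives you nothing.

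The paper's actual route is genuinely different. It never tries to exhibit a global period. Instead it assumes $\maxPPL(\ww_0)=\kk<\infty$, extracts an \emph{ordinary} non-periodic palindromic factor $\zz$ with $\hh$ non-periodic palindromic prefixes, and counts: the hierarchical (binary-tree) structure of those prefixes yields $2^{\hh-1}$ ``base positions'' $\widetilde\BB$, while the sets $\Cover(m)$ of positions realizing padded palindromic length $m$ are shown---via the nested periodic structure machinery (Lemma~\ref{ry88dbh3v4}) and the interaction of NPS clusters with base positions (Propositions~\ref{duid0jfgh} and~\ref{rr7tff5})---to meet $\widetilde\BB$ in only polynomially many points in $\hh$. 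Choosing $\hh$ large enough (inequality~(\ref{adhuf99j})) gives a base position not covered by any $\Cover(m)$ with $m\le\kk$, a contradiction. So the engine is an exponential-vs-polynomial counting argument, not a period-gluing argument, and it sidesteps the factorization-tracking problem entirely by working with a covering structure rather than with individual optimal factorizations.
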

It was shown in \cite{10.1007/978-3-319-66396-8_19} that for an infinite aperiodic word \(x\), we have that the palindromic length of factors of \(x\) is unbounded if and only if the palindromic length of prefixes of \(x\) is unbounded. 
Several partial solutions of Conjecture \ref{tue8eiru883} can be found. In \cite{FrPuZa}, the conjecture has been proven for infinite \(\alpha\)-power free words, where \(\alpha\) is any positive integer. In \cite{FRID2018202}, the conjecture has been proven for Sturmian words. 
Also various properties of palindromic length have been investigated; see, for instance, \cite{AMBROZ201974,10.1007/978-3-031-05578-2_6,10.1007/978-3-030-62536-8_14,RUKAVICKA2022106,10.1007/978-3-319-66396-8_19}.  Concerning  algorithmical and computational aspects of the palindromic length, the reader may find some results in \cite{borozdin_et_al:LIPIcs:2017:7338,FICI201441,10.1007/978-3-319-07566-2_16,RuSh15}.

In the current article, we prove Conjecture \ref{tue8eiru883} to be true. We sketch briefly our proof. 
Let \(\mathbb{N}_0\), \(\mathbb{N}_1\), \(\mathbb{Z}\), and \(\mathbb{Q}\) denote the set of all non-negative integers, all positive integers, all integers, and all rational numbers, respectively. Given \(n_1\leq n_2\in\mathbb{Z}\), let \(\mathbb{N}(n_1,n_2)=\{n\in\mathbb{N}_0\mid n_1\leq n\leq n_2\}\).  

Because of a technical convenience in some proofs, we consider the special letter \(\bb\) that we use for ``padding'' as follows (see also Remark \ref{dhj8f7ekv2} for more details):  
Let \(\Sigma_0\) and \(\Sigma\) be two finite alphabets such that \(\Sigma_0\cup\{\bb\}=\Sigma\) and \(\bb\not\in\Sigma_0\).  
Given an infinite word \(x_0\in\Sigma_0^{\infty}\), let \(\pad(x_0)=x\in\Sigma^{\infty}\) be such that \(x[2i-1]=\bb\) and \(x[2i]=x_0[i]\), where \(i\in\mathbb{N}_1\). Given a finite word \(u\in\Sigma_0^+\), let \(\pad(u)=u_1\bb u_2\bb \dots \bb u_n\in\Sigma^+\), where \(u=u_1u_2\cdots u_n\), \(n=\vert u\vert\), and \(u_1,u_2,\dots,u_n\in\Sigma_0\). We call \(\pad(x_0)\) and \(\pad(u)\) \emph{padded} words. Note that every second letter in padded words is the letter \(\bb\).

For \(u\in\pad(\Sigma_0^+)\) we define the \emph{padded palindromic length} \(\PPL(u)\) to be the minimal number \(\kk\) such that \(u= p_1\bb p_2\bb \cdots \bb p_{\kk}\), where \(p_1,p_2,\dots,p_{\kk}\in\Sigma^+\) are palindromes. 
Given \(x_0\in\Sigma_0^{\infty}\), let \[\begin{split}\maxPL(x_0)&=\max\{\PL(u)\mid u\in\Sigma_0^+\mbox{ is a factor of }x_0\}\mbox{ and }\\
\maxPPL(x_0)&=\max\{\PPL(\pad(u))\mid u\in\Sigma_0^+\mbox{ is a factor of }x_0\}\mbox{.}\end{split}\] Obviously \(u\in\Sigma_0^+\) is a palindrome if and only if \(\pad(u)\in\Sigma^+\) is a palindrome. Then the following lemma is easy to be verified. We omit the proof.
\begin{lemma}
\label{dd554hye88rc}
Suppose \(x_0\in\Sigma^{\infty}\). We have that  
\(\maxPL(x_0)<\infty\) if and only if \(\maxPPL(x_0)<\infty\).
\end{lemma}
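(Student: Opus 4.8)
The plan is to establish the stronger identity $\PL(u)=\PPL(\pad(u))$ for \emph{every} $u\in\Sigma_0^+$; the lemma then follows at once, since $\maxPL(x_0)$ and $\maxPPL(x_0)$ are the suprema of the very same collection of numbers (one value per factor of $x_0$), so one is finite exactly when the other is. Fix $u=u_1u_2\cdots u_n$ with $u_i\in\Sigma_0$, and keep in mind the shape of $\pad(u)=u_1\bb u_2\bb\cdots\bb u_n$: it has length $2n-1$, carries the letter $u_i\in\Sigma_0$ at position $2i-1$, and carries the letter $\bb$ at every even position.

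For the inequality $\PPL(\pad(u))\le\PL(u)$ I would take an optimal palindromic factorization $u=q_1q_2\cdots q_k$ with $k=\PL(u)$ and use the fact that padding a concatenation is the same as padding the blocks and gluing them with single $\bb$'s: $\pad(u)=\pad(q_1)\,\bb\,\pad(q_2)\,\bb\cdots\bb\,\pad(q_k)$. By the observation recorded just before the lemma, each $\pad(q_i)$ is a palindrome in $\Sigma^+$ because $q_i$ is; hence this is an admissible padded factorization and $\PPL(\pad(u))\le k$.

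For the reverse inequality $\PL(u)\le\PPL(\pad(u))$ I would take an optimal padded factorization $\pad(u)=p_1\,\bb\,p_2\,\bb\cdots\bb\,p_k$ with $k=\PPL(\pad(u))$ and argue that it must respect the ``padding grid'': each $p_i$ occupies positions $[2a_i-1,\,2b_i-1]$ of $\pad(u)$ for suitable $1\le a_i\le b_i\le n$, so $p_i=\pad(u_{a_i}u_{a_i+1}\cdots u_{b_i})$. This comes from a short induction on $i$: $p_1$ starts at position $1$, which is odd; and if $p_i$ starts at an odd position then its first letter lies in $\Sigma_0$, hence differs from $\bb$, hence (as $p_i$ is a palindrome) its last letter differs from $\bb$ as well, forcing $p_i$ to end at an odd position; the single separating $\bb$ then sits at the following (even) position, so $p_{i+1}$ again starts at an odd position. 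Writing $p_i=\pad(w_i)$ with $w_i:=u_{a_i}\cdots u_{b_i}$, the blocks $w_1,\dots,w_k$ concatenate to $u$, and each $w_i$ is a palindrome by the same observation before the lemma (a padded word is a palindrome iff its unpadded source is). Thus $u=w_1w_2\cdots w_k$ is a palindromic factorization and $\PL(u)\le k$.

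The only mildly delicate point is this parity/alignment step in the second direction: one has to rule out a palindrome of the padded factorization being ``shifted off the grid.'' Everything else is bookkeeping; it is worth noting the convention that a padded factorization into $k$ palindromes uses $k-1$ separators, so the base case $k=1$ ($\pad(u)$ itself a single palindrome) needs no separate treatment, and the last block $p_k$ simply runs to the end of $\pad(u)$.
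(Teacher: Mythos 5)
Your proof is correct, and it is exactly the argument the paper has in mind: the paper omits the proof of Lemma~\ref{dd554hye88rc}, remarking only that $u\in\Sigma_0^+$ is a palindrome iff $\pad(u)\in\Sigma^+$ is, and your proposal supplies the intended details, establishing the sharper identity $\PL(u)=\PPL(\pad(u))$. In particular, the parity/alignment induction in the second direction is precisely the ``easy'' step the paper glosses over; your observation that a palindromic block starting at an odd (non-$\bb$) position must also end at an odd position, forcing the factorization onto the padding grid, is the right way to close it.
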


Given a nonempty word \(t\in\Sigma^+\), let \(\mper(t)\) denote the minimal period of \(t\) and let \(\order(t)=\frac{\vert t\vert}{\mper(t)}\in\mathbb{Q}\). We say that a palindrome \(p\in\Sigma^+\) is \emph{non-periodic} if \(\order(p)<2\). 
Given a word \(t\in\Sigma^+\), let \(\NPP(t)\) denote the set of all nonempty non-periodic palindromic prefixes of \(t\).
We say that a word \(t\in\Sigma^+\) is \emph{ordinary} if for every factor \(u\) of \(t\) we have that \(\vert \NPP(t)\vert\geq\vert\NPP(u)\vert\). 

Suppose \(\ww_0\in\Sigma_0^{\infty}\) to be an infinite aperiodic word such that \(\maxPL(\ww_0)<\infty\) and \(\maxPPL(\ww_0)= \kk<\infty\). Let \(\ww=\pad(\ww_0)\in\Sigma^{\infty}\).
Let \(c_1=5\), \(c_2=8\), and \(c_3=10\) be constants. Given \(\hh,m\in\mathbb{N}_1\), 
let \(\lambda(\hh,m)=c_2^m(2c_1c_3\hh)^{m^2}\in\mathbb{N}_1\) be a function.
Let \(\hh_0\in\mathbb{N}_1\) be such that if \(\hh\geq \hh_0\) then 
\begin{equation}\label{adhuf99j}2^{\hh-1}>\kk(c_3\hh)^m\lambda(\hh,m)\mbox{ for all }m\in\mathbb{N}(1,\kk)\mbox{.}\end{equation} Obviously such \(\hh_0\) exists, since \(\lim_{\hh\rightarrow \infty}\frac{\kk(c_3\hh)^{\kk}\lambda(\hh,\kk)}{2^{\hh}}=0\mbox{.}\) 

We show that \(\ww\) contains an ordinary palindromic factor \(\zz\) such that \(\bb\) is a prefix of \(\zz\) and \(\vert\NPP(\zz)\vert\geq \hh_0\); see Lemma \ref{yyie9d33b}. 
Given \(m\in\mathbb{N}_1\), let 
\[\Cover(m)=\{n\in\mathbb{N}(1,\vert \zz\vert)\mid \bb=\zz[n]\mbox{ and }\PPL(\zz[2,n-1])=m\}\mbox{.}\] 

Let \(\hh=\vert\NPP(\zz)\vert\). Given \(m\in\mathbb{N}_0\), let \(\theta(m)=(2c_1c_3\hh)^{m}\in\mathbb{N}_1\) be a function.
Let \(D\subseteq \mathbb{N}_1\), \(\xi\in\mathbb{N}_1\), and \(\overline D\subseteq D\). If \(\overline D=\emptyset\) or \(\max(\overline D)-\min(\overline D)+1\leq \xi\) then we call \(\overline D\) a \(\xi\)-\emph{cut} of \(D\). 
Given \(m\geq 1\), let \(\NestPer(m)\) be the set of all couples \((D,\xi)\) such that \(D\subseteq \mathbb{N}(1,\vert \zz\vert)\), \(D\not=\emptyset\), \(\xi\) is a period of \(\zz[\min(D),\max(D)]\),  and for every \(\xi\)-cut \(\overline D\) of \(D\) there is \(M\subseteq \NestPer(m-1)\) such that \(\vert M\vert\leq \theta(m)\) and \(\overline D\subseteq \bigcup_{(C,\xi_1)\in M}C\).
The formal definition of \(\NestPer(m)\) including \(\NestPer(0)\) will be stated in Section \ref{sec8903jhd}. 
We call \((D,\xi)\in\NestPer(m)\) a \emph{nested periodic structure} (NPS) and we call the set \(D\) an NPS \emph{cluster} of \emph{degree} \(m\). The important notion here is the ``nesting'' of NPS (the iterative definition of \(\NestPer(m)\) by means of \(\NestPer(m-1)\)) and the fact that every \(\xi\)-cut of an NPS cluster of degree \(m\) can be covered by a bounded number of NPS clusters of degree \(m-1\).

Given \(m\in\mathbb{N}_1\), we show that there is \(M\subseteq \NestPer(m)\) such that 
\begin{equation}\label{djuf7831c}\Cover(m)\subseteq\bigcup_{(D,\xi)\in M}D\mbox{ and }\vert M\vert\leq (c_3\hh)^{m}\mbox{.}\end{equation} 
In other words, we show that \(\Cover(m)\) can be covered by a bounded number of NPS clusters of degree \(m\). The proof of this result takes the most of the current article (Section \ref{sec8903jhd} and Lemma \ref{ry88dbh3v4}). In the proofs, we use the fact that  \(\zz\) is an ordinary factor  and hence \(\hh\) forms an upper bound on the number of non-periodic palindromic prefixes of all suffixes of \(\zz\).

Let \(\zz_1,\zz_2,\dots,\zz_{\hh}\) be the nonempty non-periodic palindromic prefixes of \(\zz\) such that \(\vert \zz_1\vert<\vert \zz_2\vert<\dots<\vert \zz_{\hh}\vert\) and \(\zz_{\hh}=\zz\). Since \(\hh=\vert\NPP(\zz)\vert\), we have that \(\zz_i\) exist and are unique. Realize that \(\zz_{i+1}\) has two ``natural'' occurrences of \(\zz_{i}\); namely as a prefix and a suffix. We call the starting positions of these occurrences \emph{base positions}.  
Consequently \(\zz_{i+2}\) has two base positions of \(\zz_{i+1}\) and four base positions of \(\zz_{i}\). It follows that \(\zz_{\hh}\) has \(2^{\hh-1}\) base positions of \(\zz_1\). If \(g\in\mathbb{N}(1,\hh)\) and \(e\in\mathbb{N}(1,\vert \zz\vert)\) is a base position of \(\zz_g\) then \(\zz[e,e+\vert \zz_g\vert-1]=\zz_g\). Let \(\widetilde\BB\) denote the set of all base positions \(e\in\mathbb{N}(1,\vert\zz\vert)\) of the palindrome \(\zz_1\). In Section \ref{cbhd88e7jf} we state a formal definition of base positions.
Using (\ref{djuf7831c}) we show that \begin{equation}\label{ddh7923vbf}\vert\widetilde\BB\cap\bigcup_{m=1}^{\kk}\Cover(m)\vert\leq \kk(c_3\hh)^{\kk}\lambda(\hh,\kk) \mbox{.}\end{equation} 
The essential observation in the proof is that if \((D,\xi)\in\NestPer(m)\) then there is \({\EE}\subset\mathbb{N}(1,\vert \zz\vert)\) such that \(\vert{\EE}\vert\leq {c_2}\) and \[D\cap\widetilde\BB\subseteq D\cap\widetilde \BB\cap\bigcup_{\delta\in{\EE}}\mathbb{N}(\delta,\delta+\xi-1)\mbox{.}\] Hence, for the intersection of an NPS cluster with base positions, we can consider only \(c_2\) periods of the nested periodic structure in question; see Proposition \ref{duid0jfgh}. This is due to the fact that the palindromes \(\zz_i\) are non-periodic and are ``hierarchically'' (like a binary tree) organised in the ordinary  factor \(\zz\). In consequence of the nesting of NPS we iteratively apply this observation for all the ``nested NPS''.

Then (\ref{adhuf99j}) and  (\ref{ddh7923vbf}) will allow us to show that \(\widetilde \BB\cap\bigcup_{m=1}^{\kk}\Cover(m)\subset \widetilde \BB\), since the number of base positions grows exponentially with \(\hh\) and the size of \(\widetilde\BB\cap\Cover(m)\) grows only polynomially with \(\hh\). It follows that we cannot cover all base positions with the set \(\bigcup_{m=1}^{\kk}\Cover(m)\), which  is a contradiction to our assumption \(\maxPPL(\ww_0)=\kk\). We conclude that \(\maxPPL(\ww_0)=\infty\), \(\maxPL(\ww_0)=\infty\), and consequently we prove Conjecture \ref{tue8eiru883} to be true; see Corollary \ref{djj887jejf}.

\section{Preliminaries}
Let \(\epsilon\) denote the empty word.
Let \(\Sigma^+\) denote the set of all nonempty finite words over \(\Sigma\) and let \(\Sigma^*=\{\epsilon\}\cup\Sigma^+\).
Let \(\Pal^+\subseteq \Sigma^+\) denote the set of all nonempty palindromes. Let \(\Pal^*=\{\epsilon\}\cup\Pal^+\). Let \(\Sigma^{\infty}=\{a_1a_2\cdots \mid a_i\in\Sigma\mbox{ and }i\in\mathbb{N}_1\}\) denote the set of all infinite words.

Given \(t\in\Sigma^*\), let \(\Factor(t)\), \(\Prefix(t)\), and \(\Suffix(t)\) denote the set of all factors, all prefixes, and all suffixes of \(t\), respectively. Note that \(\epsilon,t\in\Factor(t)\cap\Prefix(t)\cap\Suffix(t)\). 
Given \(x\in\Sigma^{\infty}\), let \(\Factor(x)\) and \(\Prefix(x)\) denote the set of all finite factors and all prefixes of \(x\), respectively.

Given \(t\in\Sigma^+\), let \(t^R=a_na_{n-1}\cdots a_1\), where \(n=\vert t\vert\), \(a_i\in\Sigma\), \(i\in\mathbb{N}(1,n)\), and \(t=a_1a_2\cdots a_n\). We call \(t^R\) a \emph{reverse} of \(t\). Note that \(t\in\Pal^+\) if and only if \(t=t^R\). In addition, we define \(\epsilon^R=\epsilon\).

Given \(t\in\Sigma^+\), we have that \(t=a_1a_2\dots a_{\vert t\vert}\), where \(a_i\in\Sigma\) and \(i\in\mathbb{N}(1,\vert t\vert)\). If \(i\leq j\in\mathbb{N}(1,\vert t\vert)\), then let \(t[i,j]=t_{i}t_{i+1}\cdots t_{j}\) and let \(t[i]=t_i\).

Given  \(\xi\in\mathbb{N}_1\) and \(S\subset\mathbb{Q}\), let 
\(\Spread(S,\xi)=\{i+\alpha\xi\in\mathbb{Q}\mid i\in S\mbox{ and } \alpha\in\mathbb{Z}\}\mbox{.}\)
We call \(\Spread(S,\xi)\) a \emph{spread} of \(S\).

Given \(D\subset\mathbb{Z}\) and \(j\in\mathbb{Z}\), let \(\add(D,j)=\{i+j\in\mathbb{Z}\mid i\in D\}\).

Given \(n_1\leq n_2\in\mathbb{N}_1\), \(j\in\mathbb{N}(n_1,n_2)\), and \(D\subset\mathbb{N}_1\), let \(\Mirror(n_1,n_2,j)=\overline j\in\mathbb{N}(n_1,n_2)\) be such that \(j-n_1=n_2-\overline j\) and let \(\Mirror(n_1,n_2,D)=\{\Mirror(n_1,n_2,j)\mid j\in D\cap  \mathbb{N}(n_1,n_2)\}\mbox{.}\) Note that if \(t\in\Sigma^+\) and \(t[n_1,n_2]\in\Pal^+\) then \(t[j]=t[\overline j]\).


Given \(n_1,n_2\in\mathbb{Z}\), let \(\mathbb{N}^2(n_1,n_2)=\{(n_3,n_4)\mid n_3\leq n_4\in\mathbb{N}(n_1,n_2)\}\).

Given \(D\subseteq \mathbb{Z}\), let \[\diam(D)=\begin{cases}
    0 &\mbox{ if }D=\emptyset\mbox{,}\\
    \max(D)-\min(D)+1 &\mbox{ if }D\not=\emptyset\mbox{,}
\end{cases}\]
and let \[\Close(D)=\begin{cases}
    \emptyset &\mbox{ if }D=\emptyset\mbox{,}\\
    \mathbb{N}(\min(D),\max(D)) &\mbox{ if }D\not=\emptyset\mbox{.}
\end{cases}\]
We call \(\diam(D)\) the \emph{diameter} of the set \(D\).

Given \(t\in\Sigma^+\), let \[\begin{split}\Period(t)=\{\xi\in\mathbb{N}_1\mid \mbox{ if }i\in{N}_1\mbox{ and }i+\xi\leq\vert t\vert\mbox{ then }t[i]=t[i+\xi]\}\mbox{.}\end{split}\] 
\begin{remark}
Note that if \(t\in\Sigma^+\) and \(\xi\geq\vert t\vert\) then \(\xi\in\Period(t)\).
\end{remark}

\section{Known results}
\begin{lemma}\label{z7d8djje} (see \cite[Chapter 8, Lemma 8.1.2]{Lothaire_2002}). 
Let $u$, $v$, $w$ be words such that $uv$ and $vw$ have the period $j$ and $\vert v\vert\geq j$. Then $uvw$ has the period $j$.
\end{lemma}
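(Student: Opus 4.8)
The plan is to argue directly from the definition of period by a case analysis on positions. Write $t=uvw$ and put $a=\vert u\vert$, so that inside $t$ the occurrence of $u$ fills positions $1,\dots,a$, the occurrence of $v$ fills positions $a+1,\dots,a+\vert v\vert$, and the occurrence of $w$ fills positions $a+\vert v\vert+1,\dots,\vert t\vert$. By the definition of $\Period$ it suffices to fix an arbitrary index $i\in\mathbb{N}_1$ with $i+j\le\vert t\vert$ and prove that $t[i]=t[i+j]$.

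I would split according to whether $i\le a$ or $i>a$. If $i>a$, then both $i$ and $i+j$ are positions inside the occurrence of $vw$ that begins at position $a+1$; since $i+j\le\vert t\vert=a+\vert vw\vert$, the hypothesis that $vw$ has period $j$ yields $t[i]=(vw)[i-a]=(vw)[i-a+j]=t[i+j]$. If instead $i\le a$, then $i+j\le a+j\le a+\vert v\vert$ because $\vert v\vert\ge j$; hence both $i$ and $i+j$ lie inside the occurrence of $uv$, which fills positions $1,\dots,a+\vert v\vert$, and the hypothesis that $uv$ has period $j$ yields $t[i]=(uv)[i]=(uv)[i+j]=t[i+j]$. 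In either case $t[i]=t[i+j]$, so $j\in\Period(t)$.

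The only point that invokes the hypothesis $\vert v\vert\ge j$ is the inequality $i+j\le a+\vert v\vert$ in the second case; this is exactly the requirement that the two period-$j$ blocks $uv$ and $vw$ overlap in at least $j$ consecutive positions, which is what allows the period to be ``glued''. The degenerate configurations ($u=\epsilon$, $w=\epsilon$, or $j$ at least as large as $\vert uv\vert$ or $\vert vw\vert$) are absorbed by the same argument together with the Remark that $\xi\ge\vert t\vert$ always lies in $\Period(t)$. There is therefore essentially no obstacle here: the statement is routine bookkeeping on positions and I would write it out along the lines sketched above.
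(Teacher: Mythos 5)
Your proof is correct and is the standard position-by-position ``gluing'' argument for this period lemma: split on whether the index $i$ lands in $u$ or in $vw$, using $\vert v\vert\ge j$ precisely to guarantee that in the first case $i+j$ still falls inside $uv$. The paper offers no proof of its own --- Lemma~\ref{z7d8djje} is cited from Lothaire as a known result --- so there is no paper proof to compare against; your argument is exactly the textbook proof of this statement and fills the cited gap correctly.
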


We present an obvious consequence of Lemma \ref{z7d8djje}. We omit the proof.
\begin{lemma}
\label{uu78rbg68}
If \(u\in\Sigma^+\), \(p\in\Pal^+\), \(\xi\in\Period(up)\), and \(\xi\leq \vert p\vert\) then \(\xi\in\Period(upu^R)\).
\end{lemma}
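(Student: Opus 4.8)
The plan is to exhibit $upu^R$ as an overlap of two words each of which has period $\xi$, and then invoke Lemma~\ref{z7d8djje}.

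First I would record the elementary fact that $\Period$ is invariant under reversal: if $t\in\Sigma^+$ and $\xi\in\Period(t)$, then $\xi\in\Period(t^R)$. Indeed, writing $n=\vert t\vert$ and using $t^R[i]=t[n+1-i]$, the requirement ``$t^R[i]=t^R[i+\xi]$ whenever $i+\xi\le n$'' becomes ``$t[n+1-i]=t[n+1-i-\xi]$ whenever $i+\xi\le n$'', which, after the substitution $k=n+1-i-\xi$, is exactly the period condition for $t$. Applying this with $t=up$, and recalling that $p=p^R$ since $p\in\Pal^+$, we obtain $(up)^R=p^R u^R=p u^R$, hence $\xi\in\Period(p u^R)$.

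Next I would apply Lemma~\ref{z7d8djje} with the substitution $u\mapsto u$, $v\mapsto p$, $w\mapsto u^R$, and $j\mapsto\xi$. All hypotheses hold: $up$ has period $\xi$ by assumption, $pu^R$ has period $\xi$ by the previous step, $\vert p\vert\ge\xi$ by assumption (this is precisely what supplies the required overlap condition $\vert v\vert\ge j$), and $u$, $p$, $u^R$ are all nonempty. The lemma then yields that $uvw=upu^R$ has period $\xi$, that is, $\xi\in\Period(upu^R)$, which is the claim.

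There is essentially no obstacle here, consistently with the statement being ``an obvious consequence'': the only point needing a (one-line) verification is the reversal-invariance of $\Period$, and one should simply make sure that the reversal correctly matches the shared factor $v=p$ in the two overlapping words $up$ and $pu^R$ before applying Lemma~\ref{z7d8djje}.
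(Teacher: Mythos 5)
Your proof is correct and follows exactly the route the paper indicates: the paper presents this as ``an obvious consequence of Lemma~\ref{z7d8djje}'' and omits the proof, and your argument---showing $pu^R=(up)^R$ inherits the period $\xi$ from $up$, then applying Lemma~\ref{z7d8djje} to the overlap $up$, $pu^R$ with $|p|\geq\xi$---is precisely the intended one.
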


\begin{lemma} (see \cite[Lemma 1]{10.1007/978-3-662-46078-8_24})
\label{tudjkdi8545}
Suppose \(j\) is a period of a nonempty palindrome \(x\); then there are
palindromes \(p_1\) and \(p_2\) such that \(\vert p_1p_2\vert = j\), \(p_2\not=\epsilon\), and \(x = (p_1p_2)^*p_1\).
\end{lemma}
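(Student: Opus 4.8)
The plan is to read a decomposition off from the period and then use the palindromicity of $x$ to upgrade the two blocks to palindromes. First dispose of the case $j>\vert x\vert$: then $x$ is already a nonempty palindrome, so put $p_1=x$ and let $p_2$ be any nonempty palindrome of length $j-\vert x\vert\geq 1$ (for instance a power of a single letter); then $\vert p_1p_2\vert=j$, $p_2\neq\epsilon$, and $x=(p_1p_2)^0p_1$.

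Now assume $j\leq\vert x\vert$ and write $\vert x\vert=qj+r$ with $q\in\mathbb{N}_1$ and $0\leq r<j$. Set $p_1=x[1,r]$ (the empty word when $r=0$) and $p_2=x[r+1,j]$; then $\vert p_1p_2\vert=j$ and $p_2\neq\epsilon$ because $r<j$. Since $j\in\Period(x)$, the word $x$ is determined by its prefix $x[1,j]=p_1p_2$; explicitly $x=(p_1p_2)^qp_1$. So the whole content is to show that $p_1$ and $p_2$ are palindromes, and for this I would combine the two symmetries available on the set of positions $\{1,\dots,\vert x\vert\}$: the reflection $i\mapsto\vert x\vert+1-i$ (because $x$ is a palindrome) and translation by $j$ (because $j$ is a period), the latter used to push a position back into $\{1,\dots,j\}$.

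Concretely, for $i\in\{1,\dots,r\}$ one has $\vert x\vert+1-i=qj+(r+1-i)$ with $r+1-i\in\{1,\dots,r\}$, hence $x[i]=x[\vert x\vert+1-i]=x[r+1-i]$, which is exactly the assertion that $p_1$ is a palindrome; and for $i\in\{r+1,\dots,j\}$ one has $\vert x\vert+1-i=(q-1)j+(j+r+1-i)$ with $j+r+1-i\in\{r+1,\dots,j\}$, hence $x[i]=x[\vert x\vert+1-i]=x[j+r+1-i]$, and $i\mapsto j+r+1-i$ is precisely the reflection of the interval $\{r+1,\dots,j\}$, so $p_2$ is a palindrome. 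Then $x=(p_1p_2)^qp_1=(p_1p_2)^*p_1$, as required. I do not expect a genuine obstacle: the only care needed is the index bookkeeping when subtracting multiples of $j$ — verifying that the reduced index stays in $\{1,\dots,\vert x\vert\}$ so that both $\Period(x)$ and the palindrome relation legitimately apply — together with remembering the degenerate cases $r=0$ (where $p_1=\epsilon$ and the second computation just shows $x[1,j]$ is a palindrome) and $j>\vert x\vert$ treated above.
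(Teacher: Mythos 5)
The paper does not prove this lemma itself; it cites it from the literature, so there is no "paper proof" to compare against. Judged on its own, your argument is correct and complete. You take the natural direct route: extract $p_1=x[1,r]$ and $p_2=x[r+1,j]$ from the division $\vert x\vert=qj+r$, then verify palindromicity of each block by composing the mirror symmetry $i\mapsto\vert x\vert+1-i$ (from $x$ being a palindrome) with the translation by multiples of $j$ (from $j$ being a period). The index bookkeeping checks out in both blocks: for $i\in\{1,\dots,r\}$ the chain $x[i]=x[\vert x\vert+1-i]=x[r+1-i]$ is legitimate because all intermediate indices lie in $\{1,\dots,\vert x\vert\}$, and similarly for $i\in\{r+1,\dots,j\}$. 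The degenerate cases are handled: $r=0$ gives $p_1=\epsilon$ (which is acceptable here since the paper later uses this lemma with $p_1\in\Pal^*$), and $j>\vert x\vert$ is dealt with separately and consistently with the paper's convention that any $\xi\geq\vert t\vert$ is a period of $t$. An alternative route one sometimes sees is to observe that the prefix of length $\vert x\vert-j$ is itself a palindrome (Lemma~\ref{id8ieubmzmfj} in reverse) and then induct on $\vert x\vert$; your direct computation avoids the induction and is arguably cleaner.
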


\begin{lemma} (see \cite[Lemma 2]{10.1007/978-3-662-46078-8_24})
\label{id8ieubmzmfj}
Suppose \(t\) is a palindrome and \(u\) is its proper suffix-palindrome or
prefix-palindrome; then the number \(\vert t\vert-\vert u\vert\) is a period of \(t\). 
\end{lemma}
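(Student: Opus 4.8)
The plan is to reduce both cases to the classical duality between borders and periods. First I would note that the hypothesis ``$t$ is a palindrome'' automatically makes $u$ occur both as a prefix and as a suffix of $t$. Suppose $u$ is a proper prefix-palindrome, so $t=uv$ with $v\in\Sigma^+$; then $t=t^R=v^Ru^R=v^Ru$, so $u\in\Suffix(t)$ as well. If instead $u$ is a proper suffix-palindrome, write $t=vu$; then $t=t^R=u^Rv^R=uv^R$, so $u\in\Prefix(t)$. In either case $u$ is a border of $t$: with $n=\vert t\vert$ and $\kk=\vert u\vert<n$ we have $t[1,\kk]=u=t[n-\kk+1,n]$.

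Next I would extract the period. For every $i\in\mathbb{N}(1,\kk)$ the prefix occurrence gives $t[i]=u[i]$ and the suffix occurrence gives $t[n-\kk+i]=u[i]$, hence $t[i]=t[i+(n-\kk)]$; and for $i>\kk$ the index $i+(n-\kk)$ exceeds $n$, so there is nothing to check. Therefore $n-\kk=\vert t\vert-\vert u\vert$ is a period of $t$. This computation is insensitive to whether the two copies of $u$ overlap, so no case distinction on $2\kk$ versus $n$ is needed.

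The only degenerate situation is $u=\epsilon$, where $\vert t\vert-\vert u\vert=\vert t\vert$ is a period of $t$ because any integer that is at least $\vert t\vert$ is a period of $t$; and $u=t$ is excluded since $u$ is proper. I do not expect a genuine obstacle here: the statement is just the palindromic phrasing of the border/period correspondence, and the only care required is index bookkeeping together with this edge case. (Alternatively, one can feed the overlap of the prefix and suffix copies of $u$ into Lemma~\ref{z7d8djje} to obtain the period, but the direct argument above is shorter.)
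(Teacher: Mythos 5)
The paper does not prove this lemma; it is imported as a known result (cited to the reference in the statement), so there is no in-paper proof to compare your argument against. Your proof is correct and is the standard one: a palindromic proper prefix (or suffix) of a palindrome is automatically a border, and a border of length $\kk$ yields the period $\vert t\vert-\kk$ by direct index bookkeeping; the $u=\epsilon$ edge case is handled correctly via the convention that any $\xi\geq\vert t\vert$ is a period of $t$.
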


\begin{lemma} (see \cite[Lemma \(8\)]{BucMichGreedy2018}) 
\label{yer789rkjfdkf}
If the palindromic lengths of prefixes (or more generally factors) of an infinite word \(x\) are bounded, then there exists a suffix \(\overline x\) of \(x\) having infinitely many palindromic prefixes.
\end{lemma}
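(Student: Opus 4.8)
\textbf{Proof proposal for Lemma~\ref{yer789rkjfdkf}.}
The plan is to establish the prefix form (which is the stronger one, since every prefix is a factor) by contraposition. So I assume that \emph{no} infinite suffix of $x$ has infinitely many palindromic prefixes, and I aim to show that the palindromic lengths $\PL(x[1,n])$, $n\in\mathbb{N}_1$, are unbounded. For $j\in\mathbb{N}_1$ let $x^{(j)}=x[j]x[j+1]\cdots$ be the infinite suffix of $x$ starting at position $j$, and let $L(j)$ be the length of the longest palindrome occurring as a prefix of $x^{(j)}$. The single letter $x[j]$ is such a palindrome, so $L(j)\geq 1$; and $L(j)<\infty$ by the standing assumption.

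Next I would define a strictly increasing sequence $(b_m)_{m\geq 1}$ of positive integers bounding how far into $x$ a factorization into $m$ palindromes can reach: put $b_1=L(1)$ and $b_{m+1}=b_m+\max\{L(a)\mid 1\leq a\leq b_m+1\}$. Each $b_m$ is finite, and $b_{m+1}>b_m$ since $L\geq 1$. The key claim is: \emph{if $x[1,n]=p_1p_2\cdots p_m$ with $p_1,\dots,p_m\in\Pal^+$, then $n\leq b_m$}, and I would prove it by induction on $m$. For $m=1$ the word $x[1,n]=p_1$ is a palindromic prefix of $x^{(1)}$, so $n=|p_1|\leq L(1)=b_1$. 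For $m\geq 2$ set $j=1+|p_1\cdots p_{m-1}|$, so that $x[1,j-1]=p_1\cdots p_{m-1}$ is a concatenation of $m-1$ palindromes; by the induction hypothesis $j-1\leq b_{m-1}$, i.e.\ $j\leq b_{m-1}+1$. Moreover $p_m=x[j,n]$ is a palindromic prefix of $x^{(j)}$, so $n-j+1=|p_m|\leq L(j)\leq\max\{L(a)\mid 1\leq a\leq b_{m-1}+1\}$; adding these two inequalities gives $n\leq b_{m-1}+\max\{L(a)\mid 1\leq a\leq b_{m-1}+1\}=b_m$.

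With the claim in hand I would finish as follows. Let $N\in\mathbb{N}_1$ be arbitrary. Every prefix $x[1,n]$ with $\PL(x[1,n])\leq N$ admits a factorization into $m\leq N$ palindromes, so by the claim and the monotonicity of $(b_m)$ we get $n\leq b_m\leq b_N$. Hence $\PL(x[1,b_N+1])>N$. Since $N$ was arbitrary, $\PL$ is unbounded on the prefixes of $x$, which is exactly the contrapositive of the prefix form of the lemma. The factor form is then immediate: if the palindromic lengths of all factors of $x$ are bounded, then in particular those of its prefixes are bounded, so some infinite suffix of $x$ has infinitely many palindromic prefixes.

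The argument is short, and I do not expect a substantive obstacle. The two points that need attention are: (i) one must work with an \emph{arbitrary} palindromic factorization of $x[1,n]$ rather than, e.g., a greedy one, the only property used being the trivial inequality $|p_i|\leq L(\text{start of }p_i)$, valid for any factorization; and (ii) the recursion defining $(b_m)$ must take the maximum of $L$ over positions up to $b_{m-1}+1$ (not merely $b_{m-1}$), so that the position $j$ arising in the inductive step is within range.
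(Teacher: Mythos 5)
The paper does not give its own proof of this lemma; it is cited as an external result, namely \cite[Lemma 8]{BucMichGreedy2018}. Your argument is a correct, self-contained proof by contraposition. The key device is the recursively defined sequence $b_1=L(1)$, $b_{m+1}=b_m+\max\{L(a)\mid 1\le a\le b_m+1\}$, where $L(j)$ is the (finite, by hypothesis) length of the longest palindromic prefix of the suffix of $x$ starting at position $j$; the inductive claim that any factorization of $x[1,n]$ into $m$ nonempty palindromes forces $n\le b_m$ then gives $\PL(x[1,b_N+1])>N$ for every $N$, which is the contrapositive of the prefix form. You have also handled the one delicate point correctly: taking the maximum of $L$ over $1\le a\le b_{m-1}+1$ (rather than up to $b_{m-1}$) is exactly what keeps the start position $j$ of the last palindrome within range, and the deduction of the factor form from the prefix form is immediate since every prefix is a factor. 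I see no gap.
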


\begin{theorem}(see \cite[Theorem of Fine and Wilf]{WILF1965})
\label{dyjf8e9ekjfdi}
Let \(t\) be a word having periods \(i\) and \(j\) with \(i\leq j\). If \(\vert t\vert\geq i+j-\gcd(i,j)\) then \(t\) has also the period \(\gcd(i,j)\). 
\end{theorem}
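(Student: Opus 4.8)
The plan is to read the two periods off the length-$j$ prefix of $t$, viewed as a colouring of a cyclic group. Write $p=i$, $q=j$, $d=\gcd(i,j)$, $n=|t|$; since $d\mid p$ we have $p\ge d$, hence $n\ge p+q-d\ge q$. Let $u=t[1,q]$. As $t$ has period $q$ and $n\ge q$, the whole word is the periodic continuation of $u$; encoding the letters of $u$ by a map $\bar w\colon\mathbb{Z}/q\mathbb{Z}\to\Sigma$ via $\bar w(\overline{k-1})=u[k]$, we get $t[k]=\bar w(\overline{k-1})$ for every $k\in\mathbb{N}(1,n)$. The period $p$ now says that $\bar w$ is \emph{almost} invariant under the shift $x\mapsto x+\bar p$ of $\mathbb{Z}/q\mathbb{Z}$: the equality $\bar w(\bar m)=\bar w(\overline{m+p})$ is forced for every $m\in\mathbb{N}(0,n-p-1)$, so the ``defect'' indices — those $m\in\{0,\dots,q-1\}$ for which it is not forced — all lie in $\{n-p,\dots,q-1\}$, a block of at most $p+q-n\le d$ consecutive integers.

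The combinatorial core is that $d$ defects cannot disconnect things. The shift $x\mapsto x+\bar p$ partitions $\mathbb{Z}/q\mathbb{Z}$ into the $d$ cosets of $\langle\bar p\rangle=\langle\bar d\rangle$, and on each coset it acts as a single cycle of length $q/d$. Because the defect indices form a run of at most $d$ consecutive integers, they occupy pairwise distinct residues modulo $d$, hence at most one of them lies in each coset. Removing at most one edge from a cycle leaves a connected graph, along which the values of $\bar w$ must all coincide; so $\bar w$ is constant on every coset. Since $\bar d\in\langle\bar p\rangle$, this yields $\bar w(\bar x)=\bar w(\overline{x+d})$ for all $x$, i.e.\ $t[k]=t[k+d]$ for all $k\in\mathbb{N}(1,n-d)$: the word $t$ has period $d=\gcd(i,j)$. (The case $p=q$, where $d=p=q$, is immediate, and the tiny case $n=p$ is handled directly, the claimed period being $\ge|t|$.)

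I expect the delicate point to be precisely the counting in the first paragraph: the sharp hypothesis $n\ge i+j-\gcd(i,j)$ is exactly what pins the number of defects down to $\le\gcd(i,j)$ and forces them into one consecutive block, which in turn is what gives ``at most one per coset''; the weaker bound $n\ge i+j-1$ would not suffice for the sharp conclusion. One could instead argue by strong induction on $i+j$: passing from $t$ to $v=t[1,n-i]=t[i+1,n]$ (equal as words, since $t$ has period $i$), the word $v$ inherits periods $i$ and $j-i$ and still satisfies $|v|=n-i\ge i+(j-i)-\gcd(i,j-i)$, so $v$ has period $d$ by induction; one then reassembles the two overlapping copies of $v$ inside $t$ using Lemma~\ref{z7d8djje}. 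That reassembly is immediate when the overlap has length $\ge d$, while in the single borderline configuration $j=i+\gcd(i,j)$ one again invokes a small instance of the cyclic-shift argument — so the cyclic-group viewpoint is the real content in either approach.
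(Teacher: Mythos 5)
The paper does not prove this statement; it cites the Fine--Wilf theorem as a classical result, so there is no internal argument to compare against. Your cyclic-group proof is correct and complete: encoding the length-$j$ prefix as a map on $\mathbb{Z}/j\mathbb{Z}$, the shift $x\mapsto x+\bar\imath$ decomposes the group into the $d=\gcd(i,j)$ cosets of $\langle\bar d\rangle$, each a single cycle of length $j/d$; period $i$ forces equality of $\bar w$ along every edge except for those indexed by $m\in\{n-i,\dots,j-1\}$, and the hypothesis $n\geq i+j-d$ is precisely what bounds this block by $d$ consecutive integers, hence at most one unforced edge per coset, whence $\bar w$ is constant on each coset and $t$ has period $d$. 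The degenerate cases ($i=j$, or $n\geq i+j$ so that no edge is unforced) work automatically. Your alternative inductive sketch (pass to $v=t[1,n-i]=t[i+1,n]$ with periods $i$ and $j-i$, then reassemble via Lemma~\ref{z7d8djje}) is also a valid route, and you correctly flag its borderline case: when $j=i+d$ the overlap length $n-2i$ can fall below $d$, so Lemma~\ref{z7d8djje} alone does not close the gap and one must still argue directly there; the coset argument avoids that case split, so it is the cleaner choice for a self-contained proof.
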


\section{Palindromic couples and ordinary factors}
Let
\(\PalCouple=\{(p_1,p_2)\mid p_1\in\Pal^*\mbox{ and }p_2\in\Pal^+\mbox{ and }\order(p_1p_2p_1)<2\}\mbox{}\).
We call \((p_1,p_2)\in\PalCouple\) a \emph{non-periodic palindromic couple}. 
We show several properties of non-periodic palindromes and palindromic couples.
\begin{lemma}
We have that \begin{enumerate}[ref=S\arabic*,label=S\arabic*:]
\item \label{uyue4d9bh} If \(p_1,p_2\in\Pal^+\), \(\order(p_1),\order(p_2)<2\), \(p_1\in\Prefix(p_2)\), and \(\vert p_1\vert<\vert p_2\vert\) then \(2\vert p_1\vert<\vert p_2\vert\).
\item \label{yud8u33d} If \((p_1,p_2)\in\PalCouple\), \(t\in\Prefix((p_1p_2)^{\infty})\), and \(\vert t\vert\geq {2}\vert p_1p_2\vert\) then \(\mper(t)=\vert p_1p_2\vert\).
\item \label{hhd73d22d} If \((p_1,p_2)\in\PalCouple\), \(t\in\Prefix((p_1p_2)^{\infty})\), \(p_1p_2p_1\in\Prefix(t)\), \(\xi\in\Period(t)\), and \(\vert t\vert\geq {4}\xi\)  then \(\frac{\xi}{\vert p_1p_2\vert}\in\mathbb{N}_1\).
\item \label{yuc79drx1} If \(u\in\Sigma^+\) and \(T=\{(p_1,p_2)\in\PalCouple\mid p_1p_2=u\}\) then \(\vert T\vert\leq 1\).
\item \label{dy889ekirf} If \(p\in\Pal^+\) and \(\order(p)<2\) then \((\epsilon,p)\in\PalCouple\). 
\item \label{uhe76fgt} If \((p_1,p_2),(p_3,p_4)\in\PalCouple\), \(\alpha_1,\alpha_2\in\mathbb{N}_1\), \(\alpha_1,\alpha_2\geq 2\), and \((p_1p_2)^{\alpha_1}p_1=(p_3p_4)^{\alpha_2}p_3\) then \((p_1,p_2)=(p_3,p_4)\).
\end{enumerate}
\end{lemma}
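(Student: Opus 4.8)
The plan is to dispatch S5 and S1 at once, and to reduce the remaining four items to a single Fine--Wilf argument. S5 is immediate from the definition of $\PalCouple$: $\epsilon\in\Pal^*$, $p\in\Pal^+$, and $\order(\epsilon p\epsilon)=\order(p)<2$. For S1 I would note that $p_1$ is a \emph{proper} prefix-palindrome of the palindrome $p_2$, so by Lemma~\ref{id8ieubmzmfj} the number $|p_2|-|p_1|$ is a period of $p_2$; hence $\mper(p_2)\le|p_2|-|p_1|$, and if $2|p_1|\ge|p_2|$ this would give $\mper(p_2)\le|p_2|/2$, i.e.\ $\order(p_2)\ge 2$, a contradiction (the assumption $\order(p_1)<2$ is not needed).

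The common device for S2, S3, and S4 is the following ``violation'' principle. If $(q_1,q_2)\in\PalCouple$ and the palindrome $q_1q_2q_1$ has a period $d$ with $d\mid|q_1q_2|$ and $d<|q_1q_2|$, then $d$ is a proper divisor of $|q_1q_2|$, so $\mper(q_1q_2q_1)\le d\le|q_1q_2|/2\le|q_1q_2q_1|/2$, forcing $\order(q_1q_2q_1)\ge 2$ --- impossible (when $q_1=\epsilon$, read $q_1q_2q_1$ as $q_2$ throughout). Since a period of a word is a period of each of its prefixes, it then suffices to use the theorem of Fine and Wilf (Theorem~\ref{dyjf8e9ekjfdi}) to convert two known periods of an ambient word containing $q_1q_2q_1$ as a prefix into a short common divisor. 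For S2: $t\in\Prefix((p_1p_2)^{\infty})$ has period $q:=|p_1p_2|$, so $\mper(t)\le q$; assuming $\mper(t)=\xi<q$, the bound $|t|\ge 2q\ge q+\xi\ge\xi+q-\gcd(\xi,q)$ lets Fine--Wilf give $t$ the period $\gcd(\xi,q)\le\xi<q$, which, as $|p_1p_2p_1|\le 2q\le|t|$, is also a period of the prefix $p_1p_2p_1$ and hence violates $(p_1,p_2)$; therefore $\mper(t)=q$.

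For S3, put $q:=|p_1p_2|$. From $p_1p_2p_1\in\Prefix(t)$ and $\xi\in\Period(t)$ we get $\mper(p_1p_2p_1)\le\xi$, and $(p_1,p_2)\in\PalCouple$ then forces $\xi\ge\mper(p_1p_2p_1)>\tfrac{1}{2}|p_1p_2p_1|\ge q/2$; hence $3\xi>q$, so $|t|\ge 4\xi\ge\xi+q\ge\xi+q-\gcd(\xi,q)$ and Fine--Wilf gives $t$, hence its prefix $p_1p_2p_1$, the period $d:=\gcd(\xi,q)$. If $d<q$ it violates $(p_1,p_2)$, so $d=q$, i.e.\ $\xi/|p_1p_2|\in\mathbb{N}_1$. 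For S4, suppose $(p_1,p_2),(p_1',p_2')\in\PalCouple$ with $p_1p_2=p_1'p_2'=u$; since $u$ and $p_1$ determine $p_2$, a failure of $|T|\le1$ forces $|p_1|\ne|p_1'|$, say $|p_1|<|p_1'|$. Both $p_1,p_1'$ are prefix-palindromes of $u$, so $up_1=p_1p_2p_1$ is a proper prefix-palindrome of the palindrome $up_1'=p_1'p_2'p_1'$; by Lemma~\ref{id8ieubmzmfj}, $s:=|p_1'|-|p_1|$ is a period of $up_1'$, and so is $|u|$. As $1\le s\le|p_1'|<|u|$ and $|up_1'|=|u|+|p_1'|\ge|u|+s\ge|u|+s-\gcd(s,|u|)$, Fine--Wilf gives $up_1'$ the period $\gcd(s,|u|)<|u|=|p_1'p_2'|$, which violates $(p_1',p_2')$; hence $|T|\le1$. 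Finally, S6 follows quickly from S2 and S4: the word $w:=(p_1p_2)^{\alpha_1}p_1=(p_3p_4)^{\alpha_2}p_3$ lies in $\Prefix((p_1p_2)^{\infty})$ with $|w|\ge 2|p_1p_2|$ (as $\alpha_1\ge 2$) and in $\Prefix((p_3p_4)^{\infty})$ with $|w|\ge 2|p_3p_4|$ (as $\alpha_2\ge 2$), so S2 gives $|p_1p_2|=\mper(w)=|p_3p_4|$; then $p_1p_2$ and $p_3p_4$ are the same (length-$\mper(w)$) prefix of $w$, and S4 yields $(p_1,p_2)=(p_3,p_4)$.

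The underlying ideas are short, so the main obstacle is bookkeeping. In S2, S3, and S4 one must pair the right two periods and verify Fine--Wilf's hypothesis $|\cdot|\ge i+j-\gcd(i,j)$ for them, and one must separately treat the degenerate case where the outer palindrome ($p_1$, $p_1'$, resp.\ $p_3$) is empty, so that $q_1q_2q_1$ collapses to $q_2$ and the violation principle is run on $\order(q_2)$ directly. A secondary point is making sure that $p_1p_2p_1$ (resp.\ $up_1'$) really does occur as a prefix of the ambient word, which is precisely where $|p_1p_2p_1|\le 2|p_1p_2|$ (resp.\ $|p_1|<|p_1'|$) and the bounds $\alpha_1,\alpha_2\ge 2$ are needed.
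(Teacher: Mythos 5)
Your proof is correct and follows essentially the same route as the paper: Lemma~\ref{id8ieubmzmfj} for S1, Fine--Wilf combined with the constraint $\order(p_1p_2p_1)<2$ for S2--S4, the definition directly for S5, and S2 plus S4 for S6. The one genuine (though harmless) divergence is in S4, where you invoke Fine--Wilf; the paper observes more directly that the single period $s=|p_1'|-|p_1|$ obtained from Lemma~\ref{id8ieubmzmfj} is already $<|p_1'|<\tfrac{1}{2}|p_1'p_2'p_1'|$, so the contradiction with $\order(p_1'p_2'p_1')<2$ is immediate and Fine--Wilf is unnecessary there. Conversely, in S3 you go slightly more directly by deriving $2\xi>|p_1p_2|$ from non-periodicity up front, whereas the paper first rules out $|t|<2|p_1p_2|$ and then invokes S2; both are sound. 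Your framing of these steps as a single ``violation principle'' is a nice bit of consolidation, but it is bookkeeping rather than a new idea.
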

\begin{proof}
\begin{itemize}
\item \ref{uyue4d9bh}: Obvious from Lemma \ref{id8ieubmzmfj}.
    \item \ref{yud8u33d}: Let \(\xi=\mper(t)\). Clearly \(\xi\leq \vert p_1p_2\vert\).  Suppose that \(\xi<\vert p_1p_2\vert\). Then \(\vert t\vert \geq \vert p_1p_2\vert+\xi\) and thus Theorem \ref{dyjf8e9ekjfdi} implies that \(\gcd(\xi,\vert p_1p_2\vert)\in\Period(t)\). Since \(\xi<\vert p_1p_2\vert\), it is clear that  \(\gcd(\xi,\vert p_1p_2\vert)\leq \frac{1}{2}\vert p_1p_2\vert\). Since \(\vert t\vert\geq 2\vert p_1p_2\vert\) and consequently \(p_1p_2p_1\in\Factor(t)\), it follows that \(\frac{1}{2}\vert p_1p_2\vert\in\Period(p_1p_2p_1)\). This is a contradiction to \(\order(p_1p_2p_1)<2\). The property follows.
    \item \ref{hhd73d22d}: 
    Suppose that \(\vert t\vert<2\vert p_1p_2\vert\). Since \(4\xi\geq \vert t\vert\) and consequently \(\order(t)\geq 4\), it follows that \(\order(p_1p_2p_1)\geq 2\). It is a contradiction. We conclude that \(\vert t\vert\geq 2\vert p_1p_2\vert\). Then Property \ref{yud8u33d} implies that \(\vert p_1p_2\vert=\mper(t)\) and  Theorem \ref{dyjf8e9ekjfdi} implies that \(\gcd(\xi,\vert p_1p_2\vert)\in\Period(t)\). Hence \(\gcd(\xi,\vert p_1p_2\vert)=\vert p_1p_2\vert\). The property follows. 
    \item \ref{yuc79drx1}: Suppose \((p_1,p_2),(p_3,p_4)\in\PalCouple\) such that \(p_1p_2=p_3p_4\) and \(\vert p_1\vert<\vert p_3\vert\). Let \(\xi=\vert p_3p_4p_3\vert-\vert p_1p_2p_1\vert=\vert p_3\vert-\vert p_1\vert\). From Lemma \ref{id8ieubmzmfj} we have that \(\xi\in\Period(p_3p_4p_3)\). This is a contradiction since \(\order(p_3p_4p_3)<2\). The property follows.
    \item \ref{dy889ekirf}: Obvious from the definition of \(\PalCouple\).
    \item \ref{uhe76fgt}: Without loss of generality, suppose that \(\vert p_1p_2\vert\leq \vert p_3p_4\vert\). Property \ref{yud8u33d} implies that \(\vert p_1p_2\vert=\vert p_3p_4\vert=\mper((p_3p_4)^2)\). The property follows then from Property \ref{yuc79drx1}.
\end{itemize}
This ends the proof.
\end{proof}

We show that \(\ww\) has an infinite suffix with infinitely many non-periodic palindromic prefixes.
\begin{lemma}
\label{rrht7dhgfgt}
There are \(\ww_1\in\Sigma^+\) and \(\ww_2\in\Sigma^{\infty}\) such that \(\ww=\ww_1\ww_2\), \(\bb\in\Prefix(\ww_2)\), and  \(\vert\NPP(\ww_2)\vert=\infty\mbox{.}\)
\end{lemma}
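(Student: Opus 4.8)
The plan is to construct the suffix from Lemma~\ref{yer789rkjfdkf} together with the padding structure, reducing the statement to the assertion that an infinite aperiodic word with infinitely many palindromic prefixes must have infinitely many \emph{non-periodic} palindromic prefixes. Since $\maxPL(\ww_0)<\infty$, the factors of the infinite word $\ww_0[2,\infty]$ have bounded palindromic length, so Lemma~\ref{yer789rkjfdkf} yields an integer $k\ge 2$ for which $\bar\ww_0:=\ww_0[k,\infty]$ has infinitely many palindromic prefixes. Set $\ww_1:=\ww[1,2k-2]$ and $\ww_2:=\ww[2k-1,\infty]$; then $\ww_1\in\Sigma^+$ because $2k-2\ge 2$, and $\ww=\ww_1\ww_2$. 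Since $\bb$ occupies exactly the odd positions of $\ww=\pad(\ww_0)$, the position $2k-1$ carries $\bb$ and $\ww_2=\bb\,\ww_0[k]\,\bb\,\ww_0[k+1]\,\bb\cdots=\pad(\bar\ww_0)$, so $\bb\in\Prefix(\ww_2)$. It remains to prove $\vert\NPP(\ww_2)\vert=\infty$.

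Next I would transfer the notion of a non-periodic palindromic prefix across the padding. A prefix of $\ww_2=\pad(\bar\ww_0)$ can be a palindrome only if it is $\bb$ itself or has the form $\bb\,\bar\ww_0[1]\,\bb\cdots\bb\,\bar\ww_0[n]\,\bb=\bb\,\pad(\bar\ww_0[1,n])\,\bb$, and the latter is a palindrome precisely when $\bar\ww_0[1,n]$ is. For such an $n$, every nontrivial period of $\bb\,\pad(\bar\ww_0[1,n])\,\bb$ is even (an odd period would identify an odd position, carrying $\bb$, with an even one, carrying a letter of $\Sigma_0$), and $2\xi$ is a period of it exactly when $\xi$ is a period of $\bar\ww_0[1,n]$; hence its minimal period equals $2\,\mper(\bar\ww_0[1,n])$, and a short computation shows $\order(\bb\,\pad(\bar\ww_0[1,n])\,\bb)<2$ is equivalent to $n\le 2\,\mper(\bar\ww_0[1,n])-1$, i.e.\ to $\order(\bar\ww_0[1,n])<2$. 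Together with the trivially non-periodic prefix $\bb$, this gives $\vert\NPP(\ww_2)\vert=1+\vert\NPP(\bar\ww_0)\vert$, so the task reduces to proving $\vert\NPP(\bar\ww_0)\vert=\infty$, where $\bar\ww_0$ is aperiodic (a suffix of the aperiodic word $\ww_0$) and has infinitely many palindromic prefixes.

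Now for the core argument, suppose for contradiction $\vert\NPP(\bar\ww_0)\vert<\infty$, list all palindromic prefixes of $\bar\ww_0$ as $r_1\subsetneq r_2\subsetneq\cdots$, and fix $N$ with $r_n$ periodic for all $n\ge N$; put $q_n=\mper(r_n)$. Aperiodicity forces $\mper(\bar\ww_0[1,\ell])\to\infty$, hence $q_n\to\infty$, while $(q_n)$ is non-decreasing. For $n\ge N$ the word $r_n$ is the longest proper palindromic prefix of $r_{n+1}$, so Lemma~\ref{id8ieubmzmfj} makes $\vert r_{n+1}\vert-\vert r_n\vert$ a period of $r_{n+1}$, hence $\ge q_{n+1}$; writing $r_{n+1}=(p_1p_2)^{\alpha}p_1$ by Lemma~\ref{tudjkdi8545} with $\vert p_1p_2\vert=q_{n+1}$ and $\alpha\ge 2$, the palindrome $(p_1p_2)^{\alpha-1}p_1$ is a proper palindromic prefix of length $\vert r_{n+1}\vert-q_{n+1}$, which forces $\vert r_{n+1}\vert=\vert r_n\vert+q_{n+1}$. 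Now pick $n\ge N$ with $q_n<q_{n+1}$ (infinitely many such exist). If $q_n\mid q_{n+1}$, then $r_{n+1}[1,q_{n+1}]=r_n[1,q_{n+1}]$ is a power of $r_n[1,q_n]$, so $r_{n+1}$ has period $q_n<q_{n+1}=\mper(r_{n+1})$, a contradiction; hence $q_n\nmid q_{n+1}$, and applying Fine--Wilf (Theorem~\ref{dyjf8e9ekjfdi}) to $r_n$, which has periods $q_n$ and $q_{n+1}$, would yield a period $\gcd(q_n,q_{n+1})<q_n$ unless $\vert r_n\vert<q_n+q_{n+1}-\gcd(q_n,q_{n+1})$. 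Combining this with $\vert r_n\vert=(\alpha-1)q_{n+1}+\vert p_1\vert$ and $\vert p_1\vert<q_{n+1}$ forces $\alpha=2$, i.e.\ $r_{n+1}=(p_1p_2)^2p_1$, $r_n=p_1p_2p_1$, and $\vert p_1\vert<q_n$.

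The main obstacle is then to rule out this forced configuration recurring indefinitely. Here $r_n=p_1p_2p_1$ is itself a periodic palindromic prefix with $\mper(r_n)=q_n$, so $p_1p_2=r_n[1,q_{n+1}]$ has period $q_n$; decomposing $r_n$ via Lemma~\ref{tudjkdi8545} as $(cd)^{K}c$ with $\vert cd\vert=q_n$, $cd$ primitive and $K\ge 2$, and using that $p_1$ is simultaneously a prefix of $cd$ and a suffix of $r_n$ (the latter because $r_n$ is a palindrome, so $p_1$ is determined by the period $q_n$), one pins $p_1,p_2$ down and obtains $q_{n+1}=Kq_n+\vert c\vert-\vert p_1\vert$ in terms of the previous period-block. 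Iterating this down the strictly increasing period sequence and comparing the resulting lengths against the constraints $\vert r_n\vert\ge 2q_n$ and $\vert r_n\vert<q_n+q_{n+1}$ should drive the parameters into an impossible range, contradicting $q_n\to\infty$ while $\bar\ww_0$ stays aperiodic; the delicate point throughout is keeping primitivity of the successive period-roots compatible with palindromicity as the periods grow, which is exactly where Fine--Wilf and Lemma~\ref{tudjkdi8545} must be reapplied. Granting this, $\vert\NPP(\bar\ww_0)\vert=\infty$, hence $\vert\NPP(\ww_2)\vert=\infty$, and the lemma follows.
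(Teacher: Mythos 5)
Your reduction to proving $\vert\NPP(\bar\ww_0)\vert=\infty$, and the explicit bijection between $\NPP(\pad(\bar\ww_0))$ and $\{\bb\}\cup\NPP(\bar\ww_0)$, are correct (and more detailed than the paper, which merely asserts these steps are easy). The problem is the core argument: you take a genuinely different route from the paper and do not finish it. The paper's proof of this step is a short pigeonhole: under the contradiction hypothesis, infinitely many of the palindromic prefixes of $\ww_{0,2}$ must share a single non-periodic palindromic couple $(p_1,p_2)\in\PalCouple$, so $(p_1p_2)^j$ is a prefix for arbitrarily large $j$ and $\ww_{0,2}$ acquires the period $\vert p_1p_2\vert$, contradicting aperiodicity.

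Your route instead tracks consecutive palindromic prefixes $r_n$ and their minimal periods $q_n$, and the Fine--Wilf deductions are sound: $\vert r_{n+1}\vert=\vert r_n\vert+q_{n+1}$ via Lemma \ref{id8ieubmzmfj}, and at each jump $q_n<q_{n+1}$ Theorem \ref{dyjf8e9ekjfdi} forces $\alpha=2$, so $r_{n+1}=(p_1p_2)^2p_1$, $r_n=p_1p_2p_1$, $\vert p_1\vert<q_n$. But at that point the argument stops. The closing paragraph is a heuristic --- ``should drive the parameters into an impossible range'', ``Granting this'' --- and no contradiction is actually derived. The configuration you reached is not self-evidently impossible: it describes a periodic palindromic prefix $p_1p_2p_1$ at each jump with $\mper(p_1p_2p_1)=q_n<\vert p_1p_2\vert=q_{n+1}$, and nothing in your deductions prevents such jumps from recurring with $q_n\to\infty$. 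Indeed the finiteness of $\NPP(\bar\ww_0)$ is used only once, to choose $N$, and is never exploited again afterwards; the missing ingredient is exactly a pigeonhole over the finitely many admissible non-periodic roots or couples, which is what the paper's proof does. As written, the proposal has a genuine gap.
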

\begin{proof}
Lemma \ref{yer789rkjfdkf} implies the existence of \(\ww_{0,1}\) and \(\ww_{0,2}\) such that \(\ww_0=\ww_{0,1}\ww_{0,2}\) and \(\vert\Prefix(\ww_{0,2})\cap\Pal^+\vert=\infty\).
Suppose that \(\vert\NPP(\ww_{0,2})\vert<\infty\). Then \(\vert\Prefix(\ww_{0,2})\cap\Pal^+\vert=\infty\) implies that there is \((p_1,p_2)\in \PalCouple\) such that \((p_1p_2)^j\in\Prefix(\ww_{0,2})\) for infinitely many \(j\in\mathbb{N}_1\). It follows that \(\ww_{0,2}\) has a period \(\vert p_1p_2\vert\), which is a contradiction, because we supposed \(\ww_0\) to be aperiodic and consequently \(\ww_{0,2}\) to be aperiodic. It follows that \(\vert\NPP(\ww_{0,2})\vert<\infty\). 
Let \(w_2=\pad(w_{0,2})\). It is easy to see that \(w_2\) is an infinite suffix of \(\ww\), \(\bb\in\Prefix(w_2)\), and \(\vert\NPP(w_2)\vert=\infty\).
The lemma follows. This ends the proof.
\end{proof}

We show that for every integer \(\hh_0\) there is an ordinary non-periodic palindromic factor \(\zz\) of \(\ww\) with \(\hh\geq \hh_0\) non-periodic  palindromic prefixes. 
Let \(\OrdFactor\subseteq \Sigma^+\) denote the set of all ordinary factors.
\begin{lemma}
\label{yyie9d33b}
If \(\hh_0\in\mathbb{N}_1\) then there is \(\zz\in\Factor(\ww)\cap\OrdFactor\cap\Pal^+\) such that \(\vert \NPP(\zz)\vert\geq\hh_0\), \(\order(\zz)<2\) and \(\bb\in\Prefix(\zz)\).
\end{lemma}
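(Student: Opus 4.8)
The plan is to find, inside $\ww_0$, an ordinary non-periodic palindrome $w\in\Sigma_0^+$ with many non-periodic palindromic prefixes, and then take $\zz=\bb\pad(w)\bb$. As in the proof of Lemma \ref{rrht7dhgfgt}, $\ww_0$ has a suffix with infinitely many non-periodic palindromic prefixes; listing them by increasing length as $p_1,p_2,\dots$, every non-periodic palindromic prefix of $p_i$ is one of them, so $\NPP(p_i)=\{p_1,\dots,p_i\}$ and $\vert\NPP(p_i)\vert=i$. Hence $\tilde f(n):=\max\{\vert\NPP(u)\vert:u\in\Factor(\ww_0),\ \vert u\vert\le n\}$ is finite, non-decreasing and unbounded. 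I would pick $n$ with $\tilde f(n)\ge\hh_0-1$, choose $w_0\in\Factor(\ww_0)$ of length $\le n$ attaining $\tilde f(n)$ (so $w_0$ is ordinary, since every factor of $w_0$ has length $\le n$ and therefore at most $\tilde f(n)=\vert\NPP(w_0)\vert$ non-periodic palindromic prefixes), and let $w$ be the longest element of $\NPP(w_0)$; any member of $\NPP(w_0)$ is a palindromic prefix of $w_0$ of length $\le\vert w\vert$, hence a prefix of $w$, so $\NPP(w)=\NPP(w_0)$, and $w$ is an ordinary non-periodic palindrome with $\vert\NPP(w)\vert\ge\hh_0-1$. (Running the same maximality argument inside $\ww$ itself produces an ordinary non-periodic palindromic factor $q$ of $\ww$ with $\vert\NPP(q)\vert\ge\hh_0$; if $\bb\in\Prefix(q)$ we are already done with $\zz=q$, while otherwise a parity count gives $q=\pad(v)$ for a palindrome $v\in\Sigma_0^+$, confirming that the obstruction lives entirely inside $\Sigma_0$.)

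Next I would set $\zz=\bb\pad(w)\bb$ and read off its properties from the behaviour of $\mper$ under padding. For a palindromic prefix $u'$ of a word $u\in\Sigma_0^+$ one checks that $\order(\pad(u'))<2$ iff $u'$ is non-periodic or $u'=s^2$ with $s$ a primitive palindrome, while $\order(\bb\pad(u')\bb)<2$ iff $u'$ is non-periodic. Together with the obvious correspondences between palindromic prefixes of $\pad(u)$, resp.\ of $\bb\pad(u)\bb$, and those of $u$, this gives $\vert\NPP(\bb\pad(u)\bb)\vert=1+\vert\NPP(u)\vert$ and $\vert\NPP(\pad(u))\vert=\vert\NPP(u)\vert+P(u)$, where $P(u)$ is the number of palindromic prefixes of $u$ of the form $s^2$ with $s$ a primitive palindrome. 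Moreover, if $s^2$ and ${s'}^2$ are both palindromic prefixes of a word with $\vert s\vert<\vert s'\vert$, then $\vert s'\vert\ge2\vert s\vert$: otherwise $s'$ is a proper prefix of $s^2$, which (since $s'$ is a palindrome and $s$ primitive) forces a commutation $s p=p s$ with $p$ a nonempty prefix of $s$, contradicting Theorem \ref{dyjf8e9ekjfdi}. Hence $P(u)\le1+\log_2\vert u\vert$. Applying the two identities to our $w$, the word $\zz=\bb\pad(w)\bb$ lies in $\Factor(\ww)\cap\Pal^+$, begins with $\bb$, is non-periodic (because $w$ is), and satisfies $\vert\NPP(\zz)\vert=1+\vert\NPP(w)\vert\ge\hh_0$.

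The step I expect to be the main obstacle is showing that $\zz$ is ordinary. Apart from the single-letter factor $\bb$, every factor of $\zz$ has one of the shapes $\bb\pad(u')\bb$, $\bb\pad(u')$, $\pad(u')\bb$, $\pad(u')$ with $u'\in\Factor(w)$; the first two have at most $1+\vert\NPP(u')\vert\le\vert\NPP(\zz)\vert$ non-periodic palindromic prefixes because $w$ is ordinary, but the last two have $\vert\NPP(u')\vert+P(u')$ of them, which may exceed $1+\vert\NPP(w)\vert=\vert\NPP(\zz)\vert$ precisely because padding turns a periodic palindromic square $s^2$ into the non-periodic palindrome $\pad(s)\bb\pad(s)$. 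Closing this gap requires either taking $n$ so large that $\vert\NPP(w)\vert$ exceeds $\hh_0-1$ by more than the logarithmic bound on $P(u')$, or, more robustly, folding $P$ into the extremal choice: maximize the number of non-periodic palindromic prefixes over factors of $\ww$ of the shape $\bb\pad(u)\bb$ and use the Fine--Wilf/commutation argument to exclude nested palindromic square prefixes in the extremal $u$. Making this bookkeeping airtight is the bulk of the proof; everything else above is routine.
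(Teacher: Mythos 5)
Your approach --- find an ordinary non-periodic palindrome $w$ inside $\ww_0$ with $\vert\NPP(w)\vert\geq\hh_0-1$ and set $\zz=\bb\pad(w)\bb$ --- has exactly the gap you flag, and the first repair you propose cannot work. For a factor $u'$ of $w$, the factor $\pad(u')$ of $\zz$ has, as you compute, $\vert\NPP(\pad(u'))\vert=\vert\NPP(u')\vert+P(u')$, whereas $\vert\NPP(\zz)\vert=1+\vert\NPP(w)\vert$. Taking $u'=w$ already shows that ordinariness of $\zz$ would force $P(w)\leq 1$, and more generally $P(u')\leq 1+\vert\NPP(w)\vert-\vert\NPP(u')\vert$ for every factor $u'$ of $w$; nothing in the maximality choice of $w$ gives this. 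Enlarging $n$ does not absorb the $P$-excess, because $\vert\NPP(\zz)\vert$ and the comparison bound $\vert\NPP(w)\vert+P(u')$ grow by the same $\vert\NPP(w)\vert$, so the deficit survives at every $n$. Your second repair (maximize $\vert\NPP\vert$ over factors of $\ww$ of shape $\bb\pad(u)\bb$) also does not directly give ordinariness, since ordinariness quantifies over \emph{all} factors of $\zz$, including those of shape $\pad(u')$, and a maximum over the restricted shape class need not dominate them.

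Your parenthetical note is in fact where the correct proof lives, and you stopped one observation short. The paper works inside $\ww$ throughout: pick a prefix $v$ of a suffix $\ww_2$ of $\ww$ (with $\bb\in\Prefix(\ww_2)$, $\vert\NPP(\ww_2)\vert=\infty$) with $\vert\NPP(v)\vert\geq\hh_0$, take $\overline v\in\Factor(v)$ maximizing $\vert\NPP(\cdot)\vert$ over $\Factor(v)$, and let $\zz$ be the longest non-periodic palindromic prefix of $\overline v$. Then $\vert\NPP(\zz)\vert=\vert\NPP(\overline v)\vert\geq\hh_0$ and $\zz$ is ordinary by the choice of $\overline v$. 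The missing piece is the sandwich observation: if $\bb\not\in\Prefix(\zz)$, then (since $v$ is padded and $\zz$ begins and ends with a $\Sigma_0$-letter) $\bb\zz\bb\in\Factor(v)$ and $\NPP(\bb\zz\bb)=\{\bb\}\cup\{\bb p\bb\mid p\in\NPP(\zz)\}$, so $\vert\NPP(\bb\zz\bb)\vert=1+\vert\NPP(\zz)\vert$, contradicting the maximality of $\vert\NPP(\overline v)\vert$. This one line forces $\bb\in\Prefix(\zz)$ and makes the whole $P(\cdot)$ bookkeeping unnecessary.
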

\begin{proof}
Given \(t\in\Sigma^+\), let  \(\tau(t)=\max\{\vert\NPP(u)\vert\mid u\in \Factor(t)\}\).
Lemma \ref{rrht7dhgfgt} asserts that there is an infinite suffix \(\ww_2\) of \(\ww\) with \(\vert\NPP(\ww_2)\vert=\infty\) and \(\bb\in\Prefix(\ww_2)\).
Then let \(v\in\Prefix(\ww_2)\) be such that \(\vert\NPP(v)\vert=\hh_0\). Clearly such \(v\) exists and  \(\bb\in\Prefix(v)\).
Let \(\overline v\in \Factor(v)\) be such that \(\vert\NPP(\overline v)\vert=\tau(v)\). 
Let \(\zz\) be the longest non-periodic palindromic prefix of \(\overline v\). Obviously \(\vert \NPP(\zz)\vert=\vert \NPP(\overline v)\vert\geq \hh_0\). From the definition of \(\tau\) it is clear that \(\zz,\overline v\in\OrdFactor\).
Suppose \(\bb\not\in\Prefix(\zz)\). Since \(\bb\in\Prefix(v)\cap\Suffix(v)\), then clearly  \(\bb\zz\bb\in\Factor(v)\) and 
\[\NPP(\bb\zz\bb)=\{\bb\}\cup\{\bb p\bb\mid p\in\NPP(\zz)\}\mbox{.}\]
It follows that \(\NPP(\bb\zz\bb)>\NPP(\zz)\), which is a contradiction. Hence \(\bb\in\Prefix(\zz)\).
The lemma follows. This ends the proof.
\end{proof}

\section{Nesting}\label{sec8903jhd}
We define formally the nested periodic structures. Let
\[\begin{split}
\widetilde\NestPer=
    \{(D,\xi)\mid D\subseteq\mathbb{N}(1,\vert\zz\vert)\mbox{ and }\xi\in\mathbb{N}_1\mbox{ and } D\not=\emptyset\mbox{ and }\\D=\Spread(D,\xi)\cap\Close(D)\mbox{ and }
    \xi\in\Period(\zz[\min(D),\max(D)])
    \}\mbox{.}
\end{split}\] 

Given \((D,\xi)\in\widetilde \NestPer\), let \(\varphi(D,\xi)=D\). Given \(M\subseteq \widetilde\NestPer\), let  \(\varphi(M)=\{\emptyset\}\cup\{D\mid (D,\xi)\in M\}\) and let \(\widetilde \varphi(M)=\bigcup_{D\in\varphi(M)}D\).
\begin{remark}
    Note that \(\emptyset\in\varphi(M)\), although \((\emptyset,\xi)\not\in\widetilde\NestPer\).
\end{remark}

Given \((D,\xi)\in\widetilde \NestPer\), let \(\Cut(D,\xi)=\{\overline D\subseteq D\mid \overline D\mbox{ is a }\xi\mbox{-cut of }D\}\mbox{.}\)

\begin{definition}
Let \(\NestPer(0)=\{ (D,\xi)\in\widetilde \NestPer\mid\vert D\vert=1 \}\mbox{.}\)
Given \(m\in\mathbb{N}_1\), let
\[\begin{split}
    \NestPer(m)=\{(D,\xi)\in\widetilde \NestPer\mid 
    \mbox{ if }\overline D\in\Cut(D,\xi)\mbox{ then there is }M\subseteq \NestPer(m-1)\\\mbox{ such that }\vert M\vert\leq \theta(m)\mbox{ and }\overline D\subseteq \widetilde \varphi(M)\subseteq \Close(\overline D)
    \}\mbox{.}
\end{split}\] 

\end{definition}

We call \((D,\xi)\in\NestPer(m)\) a \emph{nested periodic structure} \((\NestPer)\) of \emph{degree} \(m\) and we call \(D\) an  \emph{NPS cluster} of \emph{degree} \(m\). 

Given \(m\in\mathbb{N}_0\) and \(D\subseteq \mathbb{N}(1,\vert \zz\vert)\), let
\[\begin{split}\NPSCover(m,D)=\{M\subseteq \NestPer(m)\mid D\subseteq \widetilde \varphi(M)\subseteq\Close(D)
 \}\mbox{,}\end{split}\] 
  let \(\omega(m,D)=\min\{\vert M\vert\mid M\in\NPSCover(m,D)\}\), and let \(\minNPSCov(m,D)=M\in \NPSCover(m,D)\) be such that \(\vert M\vert=\omega(m,D)\). If  \(M\in\NPSCover(m,D)\) then we call \(M\) an \emph{NPS cover} of \(D\) and if  \(M=\minNPSCov(m,D)\) then we call \(M\) the \emph{minimal NPS cover} of \(D\). 
  
\begin{remark}
We have that \(\NPSCover(m,D)\not=\emptyset\), since obviously \((\{n\},1)\in\NestPer(m)\) for every \(n\in\mathbb{N}(1,\vert\zz\vert)\) and \(m\in\mathbb{N}_0\).  Consequently \(\minNPSCov(m,D)\) exists but is not uniquely determined.
\end{remark}

Given \(m,\xi\in\mathbb{N}_1\) and \(D\subseteq \mathbb{N}(1,\vert \zz\vert)\), let 
\[\begin{split}
    \omega(m,D,\xi)=\max\{\omega(m,\overline D)\mid \overline D\in\Cut(D,\xi)\}\mbox{.}
\end{split}\]

\begin{remark}
Note that if \(m\in\mathbb{N}_1\) then  
\[\begin{split}
    \NestPer(m)=\{(D,\xi)\in\widetilde \NestPer\mid \omega(m-1,D,\xi)\leq \theta(m)
    \}\mbox{.}
\end{split}\]
\end{remark}

We present several simple properties of nested periodic structures. We omit the proof. All these properties can be easily proven by induction on \(m\).
\begin{lemma}
If \(m\in\mathbb{N}_1\), \((D,\xi)\in\NestPer(m)\), \(\mu_1=\min(D)\), and \(\mu_2=\max(D)\)  then we have that: 
\begin{enumerate}[ref=R\arabic*,label=R\arabic*:]
\item \label{uj499b} If \(\mu_3\leq \mu_4\in\mathbb{N}(\mu_1,\mu_2)\), \(\overline D=D\cap\mathbb{N}(\mu_3,\mu_4)\), and \(\overline D\not=\emptyset\) 
then \((\overline D,\xi)\in\NestPer(m)\).
\item \label{euu9cb2f1}
If \(\mu_3\leq \mu_4\in\mathbb{N}(1,\vert \zz\vert)\), and \(\zz[\mu_1,\mu_2]=\zz[\mu_3,\mu_4]\), and \(\overline D=\add(D,\mu_3-\mu_1)\)
then 
 \((\overline D,\xi)\in\NestPer(m)\) and \(\vert\minNPSCov(m,D)\vert=\vert\minNPSCov(m,\overline D)\vert\).
\item \label{dy73bh2249}
If \(\mu_3\leq \mu_4\in\mathbb{N}(1,\vert \zz\vert)\), \(\zz[\mu_1,\mu_2]=(\zz[\mu_3,\mu_4])^R\), \(\mu_2\leq \mu_4\), and \(\overline D=\Mirror(\mu_1,\mu_4,D)\) then 
\((\overline D,\xi)\in\NestPer(m)\).
\item \label{yb33hfg0d}
If \(\diam(D)\leq \xi\) then \((D,\vert\zz\vert)\in\NestPer(m)\).
    \end{enumerate}    
\end{lemma}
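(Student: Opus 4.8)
The plan is to prove R1--R4 simultaneously by induction on \(m\), taking \(m=0\) as the base: when \(m=0\) every NPS cluster is a singleton \(\{n\}\), for which all four assertions are trivial (for R2 note in addition that \(\vert\minNPSCov(0,\{n\})\vert=1\) for every \(n\), and throughout one uses the remark that \((\{n\},\xi')\in\NestPer(\ell)\) for every singleton \(\{n\}\), every \(\xi'\in\mathbb{N}_1\) and every \(\ell\in\mathbb{N}_0\)). For the inductive step fix \(m\geq 1\) and an arbitrary \((D,\xi)\in\NestPer(m)\) with \(\mu_1=\min(D)\), \(\mu_2=\max(D)\), so that \(\Close(D)=\mathbb{N}(\mu_1,\mu_2)\). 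In each of R1--R4 we must produce the candidate set \(\overline D\) and check two things: (i) that \((\overline D,\eta)\in\widetilde\NestPer\), where \(\eta=\xi\) for R1--R3 and \(\eta=\vert\zz\vert\) for R4; and (ii) that \((\overline D,\eta)\) satisfies the \(\eta\)-cut condition of the definition of \(\NestPer(m)\).

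Part (i) needs no induction. In R1 the set \(\overline D=D\cap\mathbb{N}(\mu_3,\mu_4)\) is a mere restriction; in R2 and R3 the maps \(\add(\cdot,\mu_3-\mu_1)\) and \(\Mirror(\mu_1,\mu_4,\cdot)\) are (respectively increasing and decreasing) bijections of \(\mathbb{N}(1,\vert\zz\vert)\) onto sub-intervals, and both commute with \(\Spread(\cdot,\xi)\) and with \(\Close(\cdot)\); hence \(\overline D=\Spread(\overline D,\xi)\cap\Close(\overline D)\) is inherited from the corresponding identity for \(D\). For the period, \(\zz[\min(\overline D),\max(\overline D)]\) is a factor of \(\zz[\mu_1,\mu_2]\) in R1, equals \(\zz[\mu_1,\mu_2]\) in R2, and equals \((\zz[\mu_1,\mu_2])^R\) in R3 (in the last two cases length matching gives \(\mu_3=\mu_1+\mu_4-\mu_2\), so \(\Close(\overline D)=\mathbb{N}(\mu_3,\mu_4)\)); in each case it again has period \(\xi\). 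For R4, \(\diam(D)\leq\xi\leq\vert\zz\vert\) forces every residue class mod \(\vert\zz\vert\) to meet \(\Close(D)\) at most once, which gives \(D=\Spread(D,\vert\zz\vert)\cap\Close(D)\), and \(\vert\zz\vert\) is a period of \(\zz[\mu_1,\mu_2]\) vacuously.

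For part (ii), R1 and R4 are again immediate: every \(\xi\)-cut of \(D\cap\mathbb{N}(\mu_3,\mu_4)\) is already a \(\xi\)-cut of \(D\) (R1), and, since \(\diam(D)\leq\xi\), the \(\vert\zz\vert\)-cuts of \(D\) coincide with its \(\xi\)-cuts (R4), so in both cases the covers guaranteed by \((D,\xi)\in\NestPer(m)\) serve verbatim. For R2 we pull a \(\xi\)-cut \(\overline{\overline D}\) of \(\overline D\) back along \(\add(\cdot,\mu_1-\mu_3)\) to a \(\xi\)-cut of \(D\), take \(M\subseteq\NestPer(m-1)\) of size \(\leq\theta(m)\) covering it (necessarily inside \(\mathbb{N}(\mu_1,\mu_2)\)), and, using \(\zz[\mu_1,\mu_2]=\zz[\mu_3,\mu_4]\), apply the inductive hypothesis R2 at degree \(m-1\) to translate every member of \(M\) by \(\mu_3-\mu_1\) into \(\NestPer(m-1)\); this yields the required cover of \(\overline{\overline D}\). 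Having thereby obtained \((\overline D,\xi)\in\NestPer(m)\), we get the count equality because translation by \(\mu_3-\mu_1\) maps \(\NPSCover(m,D)\) into \(\NPSCover(m,\overline D)\) (apply the membership part of R2 at degree \(m\), just established, to each cluster), with inverse translation by \(\mu_1-\mu_3\); so \(\omega(m,D)=\omega(m,\overline D)\). R3 proceeds the same way, reflecting through \(\Mirror(\mu_1,\mu_4,\cdot)\) rather than translating and invoking R3 at degree \(m-1\) to carry each cluster \((C,\xi_1)\in M\) to \((\Mirror(\mu_1,\mu_4,C),\xi_1)\in\NestPer(m-1)\).

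The step I expect to be the main obstacle is precisely this last transport in R3. Applying R3 at degree \(m-1\) to a cluster \((C,\xi_1)\) requires a reference interval \([\mu_3',\mu_4']\) with \(\zz[\min(C),\max(C)]=(\zz[\mu_3',\mu_4'])^R\) and \(\max(C)\leq\mu_4'\); the natural choice is the mirror of \([\min(C),\max(C)]\) in \([\mu_1,\mu_4]\), which carries the correct word (since \(C\subseteq\mathbb{N}(\mu_1,\mu_2)\) forces \(\zz[j]=\zz[\mu_1+\mu_4-j]\) on \(C\)) and produces exactly \(\Mirror(\mu_1,\mu_4,C)\), but whose right endpoint \(\mu_1+\mu_4-\min(C)\) meets \(\max(C)\leq\mu_4'\) only when \(\min(C)+\max(C)\leq\mu_1+\mu_4\), i.e.\ when \(C\) lies in the left half of \([\mu_1,\mu_4]\). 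When \([\mu_1,\mu_2]\) and \([\mu_3,\mu_4]\) are disjoint (equivalently \(2\mu_2<\mu_1+\mu_4\)) this is automatic; but when they overlap — in particular in the important case \(\mu_1=\mu_3,\ \mu_2=\mu_4\), where \(\zz[\mu_1,\mu_2]\) is itself a palindrome — a cluster \(C\) straddling the reflection centre overlaps its own image and R3 cannot be applied ``leftward'' as stated. Disposing of that case, by exploiting that \(\zz[\mu_1,\mu_4]\) is then a palindrome (so \(\zz[\min(C),\max(C)]\) acquires an internal reflective symmetry about \((\mu_1+\mu_4)/2\), which together with \(\xi_1\in\Period(\zz[\min(C),\max(C)])\) supplies enough structure to name an admissible \([\mu_3',\mu_4']\)) or by composing the reflection with a translation handled through R2, is where the actual work lies; everything else in R1--R4 is routine bookkeeping.
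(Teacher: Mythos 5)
Your handling of R1, R2, and R4 is correct, as is the two-stage bookkeeping in R2 (first establish the membership part for all degree-\(m\) clusters, then translate covers to get \(\omega(m,D)=\omega(m,\overline D)\)). The base case and the commutation of \(\add\) and \(\Mirror\) with \(\Spread\) and \(\Close\) are also fine. The paper omits the proof, so there is nothing to compare against; but your own argument contains a genuine gap in R3 which you have correctly diagnosed and then not closed. In the inductive step, after pulling back a \(\xi\)-cut \(\overline{\overline D}\) of \(\overline D\) and taking a degree-\((m-1)\) cover \(M\) of its preimage, a cluster \((C,\xi_1)\in M\) with \(\min(C)+\max(C)>\mu_1+\mu_4\) can occur: the only reflection interval \([\mu_3',\mu_4']\) whose mirror reproduces \(\Mirror(\mu_1,\mu_4,C)\) has \(\mu_4'=\mu_1+\mu_4-\min(C)<\max(C)\), so the side condition of R3 at degree \(m-1\) fails. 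Since R3 as stated reflects only rightward, the inductive hypothesis cannot transport such a \(C\). This happens exactly when the two reversed occurrences overlap (\(\mu_3\leq\mu_2\)), and you only sketch two candidate repairs without carrying either out; the proposal is therefore incomplete.

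The repair is short and worth making explicit. Prove, by the same induction and with no directionality condition at all, the symmetric variant: if \(n_1\leq n_2\in\mathbb{N}(1,\vert\zz\vert)\), \(\zz[n_1,n_2]\in\Pal^+\), \((C,\xi_1)\in\NestPer(\ell)\), and \(C\subseteq\mathbb{N}(n_1,n_2)\), then \((\Mirror(n_1,n_2,C),\xi_1)\in\NestPer(\ell)\). In the inductive step the covering clusters at degree \(\ell-1\) all lie in \(\mathbb{N}(n_1,n_2)\) and are mirrored by the same lemma over the same palindrome \([n_1,n_2]\), so the induction closes without ever needing a one-sided hypothesis. R3 then follows in two cases: if \(\mu_3>\mu_2\) then for every covering cluster \(C\subseteq\mathbb{N}(\mu_1,\mu_2)\) one has \(\min(C)+\max(C)\leq 2\mu_2<\mu_1+\mu_4\), so your direct appeal to R3 at degree \(m-1\) already goes through; if \(\mu_3\leq\mu_2\), the equality \(\zz[\mu_1,\mu_2]=(\zz[\mu_3,\mu_4])^R\) together with \(\mu_3=\mu_1+\mu_4-\mu_2\) forces \(\zz[\mu_1,\mu_4]\in\Pal^+\), and the symmetric variant applied to \([\mu_1,\mu_4]\) gives R3 outright. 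Filling in this lemma is what your proposal still needs.
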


We consider an NPS \((D,\xi)\) such that \(\zz[\min(D),\max(D)]\) is also periodic with \(\xi_2\). Then we show how to construct a set \(G\) with the diameter smaller than \(\xi_2\) and such that \(D\) is a subset of a  spread of \(G\) and \(\omega(m,G)\) is bounded.
\begin{lemma}
\label{uid93jjd} 
If \(m\in\mathbb{N}_1\), \((D,\xi)\in\NestPer(m)\), \(\mu_3\leq \mu_4\in\mathbb{N}(1,\vert \zz\vert)\), \(D\subseteq \mathbb{N}(\mu_3,\mu_4)\), \(\xi_2\in\Period(\zz[\mu_3,\mu_4])\), and \(\mu_4-\mu_3+1\leq \alpha \xi_2\) then there is \(G\subseteq \mathbb{N}(\mu_3,\mu_3+\xi_2-1)\) such that \(\omega(m,G)\leq \alpha\) and \(D\subseteq \Spread(G,\xi_2)\).
\end{lemma}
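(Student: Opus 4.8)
The goal is to produce a set $G \subseteq \mathbb{N}(\mu_3, \mu_3 + \xi_2 - 1)$ — i.e. $G$ living inside a single $\xi_2$-window — that is small in the NPS-cover sense ($\omega(m,G) \le \alpha$) and whose $\xi_2$-spread captures $D$. The natural candidate is the obvious "projection" of $D$ into that window: set
\[
G = \{\, n \in \mathbb{N}(\mu_3, \mu_3+\xi_2-1) \mid \Spread(\{n\},\xi_2) \cap D \neq \emptyset \,\},
\]
equivalently $G = \mathbb{N}(\mu_3,\mu_3+\xi_2-1) \cap \Spread(D,\xi_2)$. Then $D \subseteq \Spread(G,\xi_2)$ is immediate, and $G \subseteq \mathbb{N}(\mu_3,\mu_3+\xi_2-1)$ by construction. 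So the whole content is the bound $\omega(m,G) \le \alpha$.

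First I would decompose $G$ according to which "copy" of the period each element comes from: for $\beta \in \mathbb{N}(0,\alpha-1)$ let $W_\beta = \mathbb{N}(\mu_3 + \beta\xi_2,\ \mu_3+(\beta+1)\xi_2 - 1)$, intersected with $\mathbb{N}(\mu_3,\mu_4)$, and put $D_\beta = D \cap W_\beta$. Since $\mu_4 - \mu_3 + 1 \le \alpha\xi_2$, the windows $W_0,\dots,W_{\alpha-1}$ cover $\mathbb{N}(\mu_3,\mu_4)$, hence cover $D$. For each nonempty $D_\beta$, apply R1 (restricting an NPS cluster to a subinterval keeps it in $\NestPer(m)$) to get $(D_\beta,\xi) \in \NestPer(m)$, and then apply R2 with the shift $\mu_3 - (\mu_3 + \beta\xi_2) = -\beta\xi_2$: because $\xi_2$ is a period of $\zz[\mu_3,\mu_4]$, the factor $\zz$ read over $W_\beta$ equals the factor read over the base window $W_0' = \mathbb{N}(\mu_3,\mu_3+\xi_2-1)$ on the overlap, so $\add(D_\beta, -\beta\xi_2) =: G_\beta \subseteq \mathbb{N}(\mu_3,\mu_3+\xi_2-1)$ satisfies $(G_\beta,\xi)\in\NestPer(m)$ and $\vert \minNPSCov(m,D_\beta)\vert = \vert\minNPSCov(m,G_\beta)\vert$. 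But each $D_\beta$, being a nonempty cluster in $\NestPer(m)$, is covered by its own singleton NPS-cover $\{(D_\beta,\xi)\}$, so $\omega(m,D_\beta) = 1$ — hence $\omega(m,G_\beta) = 1$ as well. Here I should double-check the minor technical point: $\{(G_\beta,\xi)\} \in \NPSCover(m,G_\beta)$ requires $\widetilde\varphi(\{(G_\beta,\xi)\}) = G_\beta \subseteq \Close(G_\beta)$, which holds trivially, so $\omega(m,G_\beta) = 1$ indeed.

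Now $G = \bigcup_{\beta} G_\beta$ where the (at most $\alpha$) indices $\beta$ range over those with $D_\beta \neq \emptyset$. Taking $M = \bigcup_\beta \{(G_\beta,\xi)\}$ gives an NPS cover of $G$ of size at most $\alpha$ — except I must verify the containment $\widetilde\varphi(M) \subseteq \Close(G)$ required by the definition of $\NPSCover$. Since every $G_\beta \subseteq \mathbb{N}(\mu_3,\mu_3+\xi_2-1)$ and $G$ itself is a nonempty subset of the same window, $\Close(G) = \mathbb{N}(\min(G),\max(G))$; this need not contain all of $W_0'$. The clean fix is to replace each $(G_\beta,\xi)$ by $(G_\beta \cap \mathbb{N}(\min(G),\max(G)),\xi)$ — but $G_\beta \subseteq G$ already, so $G_\beta = G_\beta \cap \Close(G)$ automatically, and $\widetilde\varphi(M) = \bigcup_\beta G_\beta = G \subseteq \Close(G)$. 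Thus $M \in \NPSCover(m,G)$ with $\vert M\vert \le \alpha$, giving $\omega(m,G) \le \alpha$, and we are done.

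**Main obstacle.** The genuine steps are all invocations of R1 and R2, so the only thing that can go wrong is a bookkeeping slip in the index arithmetic of the period-shift: one must be sure that $\zz[\mu_3+\beta\xi_2,\ \min(\mu_4,\mu_3+(\beta+1)\xi_2-1)]$ really coincides with the corresponding prefix of $\zz[\mu_3,\mu_3+\xi_2-1]$, which is exactly the definition of $\xi_2 \in \Period(\zz[\mu_3,\mu_4])$ applied repeatedly — and that R2 is being fed a valid equal-factor hypothesis of matching lengths. I expect this to be routine; the subtle-but-harmless point worth stating explicitly is the $\Close(\cdot)$ side-condition in the definition of $\NPSCover$, which the construction satisfies only because every piece $G_\beta$ sits inside $G$ by design.
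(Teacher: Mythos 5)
Your proof is correct and takes essentially the same approach as the paper: decompose $D$ into the pieces $D_\beta$ over consecutive $\xi_2$-windows, use Property \ref{uj499b} and then Property \ref{euu9cb2f1} (applied via the period $\xi_2$ of $\zz[\mu_3,\mu_4]$) to shift each into the base window while staying in $\NestPer(m)$, and cover $G$ by the at most $\alpha$ resulting clusters. Your explicit verification of the $\Close(\cdot)$ side-condition in the definition of $\NPSCover$ is a detail the paper compresses into ``Then obviously $\omega(m,G)\leq\alpha$,'' so no real difference.
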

\begin{proof}
    Given \(i\in\mathbb{N}(1,\alpha)\), let \(D_i=D\cap \mathbb{N}(\mu_3+(i-1)\xi_2,\mu_3+i\xi_2-1)\)
    and let \(\overline D_i=\add(D_i, -(i-1)\xi_2)\). We have that \(D=\bigcup_{i=1}^{\alpha}D_i\).
    Let \(G=\bigcup_{i=1}^{\alpha}\overline D_i\). It is easy to see that \(G\subseteq \mathbb{N}(\mu_3,\mu_3+\xi_2-1)\) and \(D\subseteq \Spread(G,\xi_2)\).

Property \ref{uj499b} implies that \((D_i,\xi)\in \NestPer(m)\) and Property \ref{euu9cb2f1} implies that \((\overline D_i,\xi)\in\NestPer(m)\). Then obviously \(\omega(m,G)\leq \alpha\).
The lemma follows. This completes the proof.
\end{proof}

Given \(m\in\mathbb{N}_1\) and \((D,\xi)\in\NestPer(m)\), let \[\bottom(D,\xi)=\{\overline D\subseteq D\mid D=\Spread(\overline D,\xi)\cap\Close(D)\mbox{ and }\diam(\overline D)\leq \xi\}\mbox{.}\]
We call \(\overline D\in\bottom(D,\xi)\) a \emph{bottom} of \((D,\xi)\). 
For a nested periodic structure \((D,\xi)\in\NestPer(m)\), the next lemma derives an upper bound on the size of the minimal NPS cover of \(\overline D\in\Cut(D,\xi)\) as a function of the size of the minimal NPS cover of a bottom of \((D,\xi)\).
\begin{lemma}
\label{d8733j5r9}
    If \(m\in\mathbb{N}_1\), \((D,\xi)\in\NestPer(m)\), \(D_{0}\in\bottom(D,\xi)\), \(\omega(m-1,D_{0})\leq \alpha\in\mathbb{N}_1\), 
    \(\overline D\in\Cut(D,\xi)\) then \(\omega(m-1, \overline D)\leq 2\alpha\).
\end{lemma}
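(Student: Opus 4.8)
The plan is to transport Properties~\ref{uj499b} and~\ref{euu9cb2f1}, which concern a single NPS cluster, up to the covering number $\omega(m-1,\cdot)$, and then to use the defining feature of a bottom: a $\xi$-cut has diameter at most $\xi$, hence meets at most two consecutive $\xi$-translates of $D_{0}$, which is exactly the source of the factor $2$.

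First I would record three elementary facts about $\omega(m-1,\cdot)$, each valid because Properties~\ref{uj499b} and~\ref{euu9cb2f1} hold for NPS clusters of every degree (the degree-$0$ case being trivial, as those clusters are singletons). \emph{(a) Monotonicity:} if $A\subseteq B\subseteq\mathbb{N}(1,\vert\zz\vert)$ then $\omega(m-1,A)\le\omega(m-1,B)$; take a minimal $M\in\NPSCover(m-1,B)$, replace each $(C,\eta)\in M$ by $(C\cap\Close(A),\eta)$ (which lies in $\NestPer(m-1)$ by~\ref{uj499b} when nonempty), discard empties, and note that the resulting family $M'$ satisfies $\widetilde\varphi(M')=\widetilde\varphi(M)\cap\Close(A)$, so $A\subseteq\widetilde\varphi(M')\subseteq\Close(A)$ and $\vert M'\vert\le\vert M\vert$. \emph{(b) Shift invariance:} if $A,\add(A,s)\subseteq\mathbb{N}(1,\vert\zz\vert)$ and $\zz[j]=\zz[j+s]$ for every $j\in\Close(A)$, then $\omega(m-1,\add(A,s))=\omega(m-1,A)$; replace each cluster $(C,\eta)$ of a minimal cover of $A$ (so $C\subseteq\Close(A)$) by $(\add(C,s),\eta)$, which lies in $\NestPer(m-1)$ by~\ref{euu9cb2f1}, and observe that the image family is an element of $\NPSCover(m-1,\add(A,s))$; symmetry yields equality. \emph{(c) Subadditivity:} $\omega(m-1,A_{1}\cup A_{2})\le\omega(m-1,A_{1})+\omega(m-1,A_{2})$; take the union of minimal covers and use $\Close(A_{1})\cup\Close(A_{2})\subseteq\Close(A_{1}\cup A_{2})$.

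Next I would treat the geometry. The case $\overline D=\emptyset$ is immediate, since $\omega(m-1,\emptyset)=0$, so assume $\overline D\neq\emptyset$ and put $\mu_{1}=\min(D)$, $\mu_{2}=\max(D)$; since $(D,\xi)\in\widetilde\NestPer$ we have $\xi\in\Period(\zz[\mu_{1},\mu_{2}])$. Since $D_{0}\in\bottom(D,\xi)$, we have $D=\Spread(D_{0},\xi)\cap\Close(D)$ and $\diam(D_{0})\le\xi$; thus each translate $\add(D_{0},j\xi)$ lies inside the block $B_{j}:=\mathbb{N}(\min(D_{0})+j\xi,\min(D_{0})+(j+1)\xi-1)$, the blocks $\{B_{j}\}_{j\in\mathbb{Z}}$ tile $\mathbb{Z}$, and $\Spread(D_{0},\xi)\cap B_{j}=\add(D_{0},j\xi)$. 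As $\diam(\overline D)\le\xi$, the set $\overline D$ fits inside a window of $\xi$ consecutive integers, which meets at most two consecutive blocks $B_{k},B_{k+1}$; together with $\overline D\subseteq D\subseteq\Spread(D_{0},\xi)$ this forces $\overline D\subseteq\add(D_{0},k\xi)\cup\add(D_{0},(k+1)\xi)$. I would then write $\overline D=\overline D_{1}\cup\overline D_{2}$ with $\overline D_{i}=\overline D\cap\add(D_{0},(k+i-1)\xi)$, and introduce $A_{i}=\add(\overline D_{i},-(k+i-1)\xi)\subseteq D_{0}$ for $i\in\{1,2\}$.

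Finally I would combine everything. Fix $i\in\{1,2\}$ and set $s_{i}=(k+i-1)\xi$. Then $\Close(A_{i})\subseteq\Close(D)=\mathbb{N}(\mu_{1},\mu_{2})$ because $A_{i}\subseteq D_{0}\subseteq D$, and $\Close(\overline D_{i})=\add(\Close(A_{i}),s_{i})\subseteq\mathbb{N}(\mu_{1},\mu_{2})$ because $\overline D_{i}\subseteq D$; since $\xi$ is a period of $\zz[\mu_{1},\mu_{2}]$ this yields $\zz[j]=\zz[j+s_{i}]$ for all $j\in\Close(A_{i})$. Hence (b) gives $\omega(m-1,\overline D_{i})=\omega(m-1,A_{i})$, (a) with $A_{i}\subseteq D_{0}$ gives $\omega(m-1,A_{i})\le\omega(m-1,D_{0})\le\alpha$, and (c) gives $\omega(m-1,\overline D)\le\omega(m-1,\overline D_{1})+\omega(m-1,\overline D_{2})\le 2\alpha$, as required. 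The main obstacle I anticipate is Step~(a)--(c): the definition of $\NPSCover(m-1,A)$ demands not only $A\subseteq\widetilde\varphi(M)$ but also $\widetilde\varphi(M)\subseteq\Close(A)$, which is exactly why each cluster must be truncated to $\Close(A)$ in~(a) and why the shift in~(b) is legitimate only when $\zz$ genuinely agrees on $\Close(A)$; once this bookkeeping (and the degenerate degree-$0$ case) is pinned down, the geometric and combinatorial steps are routine.
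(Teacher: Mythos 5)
Your proof is correct and follows essentially the same strategy as the paper: split \(\overline D\) across (at most) two consecutive \(\xi\)-translates of \(D_0\), use the period \(\xi\) and Properties~\ref{uj499b}/\ref{euu9cb2f1} to carry an NPS cover of \(D_0\) to each piece, and take the union to get the factor \(2\). Your preliminary lemmas (a)--(c) formalize the translation/truncation bookkeeping more carefully than the paper's terse argument does, in particular handling cleanly the case where a translate of \(D_0\) is partially cut off by \(\Close(D)\), but the underlying decomposition and counting are the same.
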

\begin{proof}    
The lemma is obvious for \(\overline D=\emptyset\).
Suppose that \(\overline D\not=\emptyset\). Let \(\mu_0=\min(D_0)\), \(\mu_3=\min(\overline D)\), and \(\mu_4=\max(\overline D)\).
Let \(j\in\mathbb{Z}\) be such that \(\mu_0+(j-1)\xi\leq \mu_3\leq \mu_0+j\xi-1\). 
Let \(D_1=D\cap \mathbb{N}(\mu_0+(j-1)\xi,\mu_0+j\xi-1)\)
and \(D_2=D\cap \mathbb{N}(\mu_0+j\xi,\mu_0+(j+1)\xi-1)\). Obviously \(\overline D\subseteq D_1\cup D_2\).
    Let \(M_{i}=\minNPSCov(m-1,D_i)\), where \(i\in\{1,2\}\).
    Let \[\overline M_i=\{(\overline C,\xi_2)\mid (C,\xi_2)\in M_i\mbox{ and }\overline C=D_i \cap C\}\mbox{.}\]
    Property \ref{uj499b} implies that \(\overline M_i\subseteq \NestPer(m-1)\). Property \ref{euu9cb2f1} implies that \(\vert M_i\vert=\vert \minNPSCov(D_0)\vert\leq \alpha\). It follows that \(\overline M_i\leq \alpha\), where \(i\in\{1,2\}\). 
    Let \(M_3=\overline M_1\cup \overline M_2\). Then \(\vert M_3\vert\leq 2\alpha\). It is easy to see that \(M_3\in\NPSCover(m,\overline D)\). The lemma follows.    
    This ends the proof.
\end{proof}

Given \(p_0\in\Pal^+\), let \(\PalCouple(p_0)=\{(p_1,p_2)\in \PalCouple\mid p_1p_2p_1=p_0\}\).
Given \(n\in\mathbb{N}(1,\vert \zz\vert)\), let 
\[\begin{split}\FirmPalPrefix(n)=\{ p_0\in\NPP(\zz[n,\vert \zz\vert])\mid\mbox{ for all }(p_1,p_2)\in\PalCouple(p_0)\\
\mbox{ we have that }(p_1p_2)^2p_1\not\in\Prefix(\zz[n,\vert \zz\vert])
\}\mbox{.}\end{split}\]

Given \(n\in\mathbb{N}(1,\vert \zz\vert)\), let \[\begin{split}
    \Gamma(n)=\{(p_1,p_2)\in\PalCouple\mid p_1p_2p_1\in\Prefix(\zz[n,\vert \zz\vert])\mbox{ and }\\\mbox{ if }p_1p_2p_1\in\FirmPalPrefix(n)\mbox{ then }p_1=\epsilon\}\mbox{.}
\end{split}\]

From Property \ref{uhe76fgt} and the definition of ordinary factor and \(\FirmPalPrefix\) it is straightforward to verify the following lemma. This lemma illuminates the meaning of the set \(\FirmPalPrefix\). We omit the proof.
\begin{lemma}
\label{iiubn334hd}    
If \(n\in\mathbb{N}(1,\vert \zz\vert)\) then \(\vert \Gamma(n)\vert\leq \hh\).
\end{lemma}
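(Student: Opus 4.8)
\section*{Proof proposal for Lemma \ref{iiubn334hd}}

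The plan is to produce an injection from \(\Gamma(n)\) into \(\NPP(\zz[n,\vert\zz\vert])\). Since \(\zz[n,\vert\zz\vert]\in\Factor(\zz)\) and \(\zz\) is ordinary, we have \(\vert\NPP(\zz[n,\vert\zz\vert])\vert\leq\vert\NPP(\zz)\vert=\hh\), and the lemma follows at once. The first step is to organise \(\Gamma(n)\) by the value \(p_0=p_1p_2p_1\). For \((p_1,p_2)\in\Gamma(n)\) the word \(p_0\) is a palindrome with \(\order(p_0)<2\) that is a prefix of \(\zz[n,\vert\zz\vert]\), hence \(p_0\in\NPP(\zz[n,\vert\zz\vert])\); thus \(\Gamma(n)=\bigcup_{p_0}\Gamma(n,p_0)\) with \(\Gamma(n,p_0)=\{(p_1,p_2)\in\Gamma(n)\mid p_1p_2p_1=p_0\}\). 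By the defining condition of \(\Gamma(n)\): if \(p_0\in\FirmPalPrefix(n)\) then \(\Gamma(n,p_0)=\{(\epsilon,p_0)\}\) (one couple, using \ref{dy889ekirf} to see \((\epsilon,p_0)\in\PalCouple\)), while if \(p_0\notin\FirmPalPrefix(n)\) then \(\Gamma(n,p_0)=\PalCouple(p_0)\).

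The crux is therefore to understand \(\PalCouple(p_0)\) for a non-firm \(p_0\) and to charge its members injectively to elements of \(\NPP(\zz[n,\vert\zz\vert])\). Since \(p_0\notin\FirmPalPrefix(n)\), there is a couple \((q_1,q_2)\in\PalCouple(p_0)\) with \((q_1q_2)^2q_1\in\Prefix(\zz[n,\vert\zz\vert])\); I would first prove, via \ref{yud8u33d} and the theorem of Fine and Wilf, that this ``periodic'' couple is unique and in fact \(\vert q_1q_2\vert=\mper(p_0)\), so \(q_1\) is the longest proper palindromic prefix of \(p_0\). Taking \(\alpha\geq 2\) maximal with \(w:=(q_1q_2)^{\alpha}q_1\in\Prefix(\zz[n,\vert\zz\vert])\), Lemma \ref{tudjkdi8545} together with \ref{yuc79drx1} shows that every palindromic prefix of \(w\) of length \(\geq\vert q_1q_2\vert\) has the form \((q_1q_2)^{\beta}q_1\), and among these only \(p_0=(q_1q_2)^1q_1\) is non-periodic. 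The remaining couples of \(p_0\) correspond to its palindromic prefixes of length \(<\vert p_0\vert/2\); each of them, together with the firm couples, I would send to a distinct non-periodic palindromic prefix of \(\zz[n,\vert\zz\vert]\) — here Property \ref{uhe76fgt} is used to guarantee that the periodic palindromic prefixes of \(\zz[n,\vert\zz\vert]\) obtained as extensions \((p_1p_2)^{\alpha}p_1\) of distinct couples are distinct words, and \ref{uyue4d9bh} is used to control how many non-periodic palindromic prefixes a factor can have (they grow geometrically in length).

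I expect the bookkeeping in the preceding paragraph to be the main obstacle: making the assignment on \(\bigcup_{p_0\notin\FirmPalPrefix(n)}\PalCouple(p_0)\) injective and disjoint from the image of the firm part, so that the total lands inside \(\NPP(\zz[n,\vert\zz\vert])\) without collisions. This is exactly the point at which the three hypotheses must interact: Property \ref{uhe76fgt} pins down the couple underlying each periodic palindrome \((p_1p_2)^{\alpha}p_1\) with \(\alpha\geq 2\); the ``ordinary'' hypothesis caps \(\vert\NPP(u)\vert\) by \(\hh\) for every factor \(u\) of \(\zz\); and the definition of \(\FirmPalPrefix\) is precisely what isolates the palindromic prefixes already accounted for by the trivial couple \((\epsilon,p_0)\) from those requiring the refined argument above. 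Once this is set up, the case \(p_0\in\FirmPalPrefix(n)\) and the reduction \(\vert\NPP(\zz[n,\vert\zz\vert])\vert\leq\hh\) are immediate, completing the proof.
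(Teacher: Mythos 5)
The paper omits its own proof, declaring it straightforward, so I assess your sketch on its internal soundness rather than against a reference argument.

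Your plan hinges on producing an injection from \(\Gamma(n)\) into \(\NPP(\zz[n,\vert\zz\vert])\). That cannot work, because \(\Gamma(n)\) can be strictly larger than \(\NPP(\zz[n,\vert\zz\vert])\). Here is a concrete obstruction. Take \(\zz=\bb a\bb a\bb a\bb c\bb a\bb a\bb a\bb\) (a padded palindrome, \(\bb\in\Prefix(\zz)\), \(\order(\zz)=15/8<2\), and one checks it is ordinary with \(\hh=\vert\NPP(\zz)\vert=3\), the elements being \(\bb\), \(\bb a\bb\), \(\zz\)). Let \(n=2\), so \(\zz[n,\vert\zz\vert]=a\bb a\bb a\bb c\bb a\bb a\bb a\bb\). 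Its nonempty non-periodic palindromic prefixes are exactly \(a\), \(a\bb a\), and \(v:=\zz[2,14]\), so \(\vert\NPP(\zz[n,\vert\zz\vert])\vert=3\). Now \(a\) and \(v\) are firm (all their couple-extensions overrun the suffix), while \(a\bb a\) is not firm, since \((a,\bb)\in\PalCouple(a\bb a)\) and \((a\bb)^2a=a\bb a\bb a\in\Prefix(\zz[n,\vert\zz\vert])\). By the definition of \(\Gamma(n)\), the non-firm \(p_0=a\bb a\) contributes \emph{both} of its couples \((\epsilon,a\bb a)\) and \((a,\bb)\), while \(a\) and \(v\) each contribute one. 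So \(\Gamma(n)=\{(\epsilon,a),(\epsilon,a\bb a),(a,\bb),(\epsilon,v)\}\) has four elements, while your intended target \(\NPP(\zz[n,\vert\zz\vert])\) has three. No injection exists, so the ``bookkeeping'' you flag as the main obstacle is in fact an impossibility along the proposed lines.

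The underlying issue is that a single non-firm \(p_0\) occupies only one slot in \(\NPP(\zz[n,\vert\zz\vert])\), yet contributes every element of \(\PalCouple(p_0)\) to \(\Gamma(n)\), and \(\vert\PalCouple(p_0)\vert\) is one plus the number of nonempty palindromic prefixes of \(p_0\) of length \(<\vert p_0\vert/2\), a quantity that has no a priori relation to the number of \emph{non-periodic} palindromic prefixes. Any correct proof therefore has to deploy ordinariness against factors other than the single suffix \(\zz[n,\vert\zz\vert]\) (for instance, factors obtained by shifting left into \(\zz\), where the padding and palindromicity of \(\zz\) force additional non-periodic palindromic prefixes), or reorganise the count in a way that doesn't pass through an injection into \(\NPP(\zz[n,\vert\zz\vert])\). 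As written, your argument identifies the right ingredients (\ref{uhe76fgt}, the firm/non-firm split, ordinariness), but assembles them into a pigeonhole that does not hold; this is a genuine gap, not a bookkeeping detail. You should also double-check the auxiliary claim that the extending couple always has \(\vert q_1q_2\vert=\mper(p_0)\); it holds in easy cases but is not obviously true in general, and your sketch leans on it without proof.
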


Given \(n\in \mathbb{N}(1,\vert \zz\vert)\), let \[\begin{split}\PalExt_1(n)=\{(n,p_1,p_2,\alpha) \mid (p_1p_2)^{\alpha}p_1\in\Prefix(\zz[n,\vert \zz\vert])\mbox{ and }(p_1,p_2)\in\PalCouple\\\mbox{ and }\alpha\in\mathbb{N}_1\mbox{ and }p_1p_2p_1\not\in\FirmPalPrefix(n)\}\mbox{,}\end{split}\]
\[\begin{split}\PalExt_2(n)=\{(n,\epsilon,p_1,1)\mid (\epsilon,p_1)\in\PalCouple\mbox{ and }p_1\in\FirmPalPrefix(n)\}\mbox{,}\end{split}\] 
and let \(\PalExt(n)=\PalExt_1(n)\cup \PalExt_2(n)\). 
We call \((n,p_1,p_2,\alpha)\in\PalExt(n)\) a \emph{palindromic extension tuple} of the position \(n\).

Given \(D\subseteq \mathbb{N}(1,\vert \zz\vert)\), let \(\PalExt(D)=\bigcup_{n\in D}\PalExt(n)\).
Given \(n\in\mathbb{N}(1,\vert\zz\vert)\) and \((n,p_1,p_2,\alpha)\in\PalExt(n)\), let
\(\sigma(n,p_1,p_2,\alpha)=n-1+\vert (p_1p_2)^{\alpha}p_1\vert\mbox{}\)
and let \[\sigma(\PalExt(D))=\{\sigma(n,p_1,p_2,\alpha)\mid (n,p_1,p_2,\alpha)\in\PalExt(D)\}\mbox{.}\]
We call \(\sigma(n,p_1,p_2,\alpha)\in\mathbb{N}(1,\vert \zz\vert)\) a \emph{palindromic extension} of \(n\).

\begin{remark}
From Property \ref{uhe76fgt} and the definitions of \(\FirmPalPrefix\) and \(\PalExt\) it is easy to see that \(\vert \PalExt(n)\vert=\vert\sigma(\PalExt(n))\vert\) and \(\PalExt_1(n)\cap\PalExt_2(n)=\emptyset\).  Also it is obvious that if \(p_0\in\Prefix(\zz[n,\vert \zz\vert])\cap\Pal^+\) then there is \((n,p_1,p_2,\alpha)\in\PalExt(n)\) such that \(p_0=(p_1p_2)^{\alpha}p_1\).

It is clear that if \(p_0\in\FirmPalPrefix(\zz, n)\), then \(\order(p_0)<2\). Hence from Property \ref{dy889ekirf} it follows that if \(p_0\in\FirmPalPrefix(n)\) then \((\epsilon,p_0)\in\PalCouple\), hence the definition of \(\PalExt_2(n)\) makes sense.
\end{remark}

Without a proof we state the following simple observation.
\begin{lemma}
\label{uud8kt87e}    
If \((n,p_1,p_2,\alpha)\in\PalExt(n)\) and \(\overline \alpha\in\mathbb{N}(1,\alpha)\) then \((n,p_1,p_2,\overline \alpha)\in\PalExt(n)\).
\end{lemma}

We prove that all palindromic extensions of a position \(n\) are a subset of the union of \(\hh\) NPS clusters of degree \(1\).
\begin{lemma}
\label{iodi9efjg}
    If \(n\in\mathbb{N}(1,\vert \zz\vert)\) and  \(D=\sigma(\PalExt(n))\) then \(\omega(1,D)\leq \hh\).
\end{lemma}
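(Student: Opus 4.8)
Show that for $n\in\mathbb N(1,|\zz|)$, setting $D=\sigma(\PalExt(n))$ we have $\omega(1,D)\le\hh$, i.e. $D$ can be covered by at most $\hh$ NPS clusters of degree $1$ that stay inside $\Close(D)$.

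**Plan.** The plan is to split $\PalExt(n)=\PalExt_1(n)\cup\PalExt_2(n)$ and organize the palindromic extensions of $n$ by which non-periodic palindromic couple generates them. For each $(p_1,p_2)\in\Gamma(n)$ I would look at the set $D_{(p_1,p_2)}=\{\sigma(n,p_1,p_2,\alpha)\mid (n,p_1,p_2,\alpha)\in\PalExt(n)\}$. By Lemma~\ref{uud8kt87e} the admissible exponents $\alpha$ form an interval $\mathbb N(1,\alpha_{\max})$ (for some $\alpha_{\max}$ depending on the couple), so $D_{(p_1,p_2)}=\{\,n-1+|(p_1p_2)^{\alpha}p_1|\mid 1\le\alpha\le\alpha_{\max}\,\}$ is an arithmetic progression with common difference $|p_1p_2|$. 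I would then argue that $(D_{(p_1,p_2)},|p_1p_2|)\in\NestPer(1)$: it lies in $\mathbb N(1,|\zz|)$, it is nonempty, it equals its own $|p_1p_2|$-spread intersected with $\Close$ of itself (being a full arithmetic progression), and $|p_1p_2|$ is a period of $\zz[\min D_{(p_1,p_2)},\max D_{(p_1,p_2)}]$ because that factor is a factor of $(p_1p_2)^{\infty}$ shifted appropriately, using $(p_1p_2)^{\alpha_{\max}}p_1\in\Prefix(\zz[n,|\zz|])$. For membership in $\NestPer(1)$ (as opposed to just $\widetilde\NestPer$) I must also check that every $|p_1p_2|$-cut $\overline D$ of $D_{(p_1,p_2)}$ is covered by at most $\theta(1)=2c_1c_3\hh$ singletons from $\NestPer(0)$ inside $\Close(\overline D)$; but a $|p_1p_2|$-cut has diameter $\le|p_1p_2|$, hence contains at most two points of the progression $D_{(p_1,p_2)}$, so two singletons suffice and $2\le\theta(1)$.

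**Counting the clusters.** Next I would bound the number of distinct couples that actually occur. Each $(n,p_1,p_2,\alpha)\in\PalExt_1(n)$ has $(p_1,p_2)\in\PalCouple$ with $p_1p_2p_1\in\Prefix(\zz[n,|\zz|])$ and $p_1p_2p_1\notin\FirmPalPrefix(n)$; each $(n,\epsilon,p_1,1)\in\PalExt_2(n)$ has $p_1\in\FirmPalPrefix(n)$, so by Property~\ref{dy889ekirf} the couple $(\epsilon,p_1)$ lies in $\PalCouple$ with $\epsilon p_1\epsilon=p_1\in\FirmPalPrefix(n)$, whence $p_1=\epsilon\cdot p_1$ trivially satisfies the "$p_1=\epsilon$" clause and so the couple sits in $\Gamma(n)$. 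For $\PalExt_1(n)$: $p_1p_2p_1\in\Prefix(\zz[n,|\zz|])\cap\Pal^+$ and $\order(p_1p_2p_1)<2$, so $p_1p_2p_1\in\NPP(\zz[n,|\zz|])$; since it is not in $\FirmPalPrefix(n)$ the couple meets the defining condition of $\Gamma(n)$ with no constraint forced, i.e. $(p_1,p_2)\in\Gamma(n)$. Hence every couple arising from $\PalExt(n)$ lies in $\Gamma(n)$, and by Lemma~\ref{iiubn334hd} there are at most $\hh$ of them. Therefore $D=\bigcup_{(p_1,p_2)\in\Gamma(n)}D_{(p_1,p_2)}$ is a union of at most $\hh$ NPS clusters of degree $1$, giving $\omega(1,D)\le\hh$, provided I also check the "$\widetilde\varphi(M)\subseteq\Close(D)$" side condition — each $D_{(p_1,p_2)}\subseteq\mathbb N(n,\max D)\subseteq\Close(D)$ since all palindromic extensions of $n$ are $\ge n$ and $\min D\le n$ might fail, so I will note $\min D\ge n$ forces $\Close(D)=\mathbb N(\min D,\max D)\supseteq$ each piece automatically.

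**Main obstacle.** The delicate point is verifying that $|p_1p_2|$ is genuinely a period of $\zz[\min D_{(p_1,p_2)},\max D_{(p_1,p_2)}]$ rather than of a longer prefix: I must pin down $\max D_{(p_1,p_2)}=n-1+|(p_1p_2)^{\alpha_{\max}}p_1|$ and argue $(p_1p_2)^{\alpha_{\max}}p_1$ is exactly the relevant prefix of $\zz[n,|\zz|]$, so that the factor $\zz[n,n-1+|(p_1p_2)^{\alpha_{\max}}p_1|]$ equals $(p_1p_2)^{\alpha_{\max}}p_1\in\Prefix((p_1p_2)^{\infty})$ and hence has period $|p_1p_2|$; then $\zz[\min D_{(p_1,p_2)},\max D_{(p_1,p_2)}]$, being a suffix of that, also has period $|p_1p_2|$. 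I expect this, together with carefully ruling out that two different couples could be forced to coincide via Property~\ref{yuc79drx1}, to be the only part needing real care; the rest is bookkeeping with the already-established properties of $\widetilde\NestPer$, $\NestPer(0)$, and $\theta(1)$.
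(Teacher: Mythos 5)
Your proof is correct and follows essentially the same route as the paper: group the palindromic extensions of $n$ by their generating couple $(p_1,p_2)$, observe that each group is an arithmetic progression with common difference $|p_1p_2|$ (hence an NPS cluster of degree $1$), and bound the number of groups by $\hh$ via Lemma~\ref{iiubn334hd}. One tiny inaccuracy that does not affect anything: a $|p_1p_2|$-cut of such a progression has $\max(\overline D)-\min(\overline D)<|p_1p_2|$, so it contains at most one point rather than two; one singleton suffices, exactly as the paper's ``$\omega(0,D,\xi)\leq 1$'' notes.
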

\begin{proof}
Let \(K=\{(p_1,p_2)\mid (n,p_1,p_2,\alpha)\in\PalExt(n)\}\mbox{.}\) 
    Suppose \((p_1,p_2)\in K\). Let \(D=\{n+\vert(p_1p_2)^{\alpha}p_1\vert-1\mid (n,p_1,p_2,\alpha)\in \PalExt(n)\}\mbox{}\) and \(\xi=\vert p_1p_2\vert\). 
    Let \(\mu_1=n+\vert p_1p_2p_1\vert-1\) and let \(\mu_2=\max(D)\). Lemma \ref{uud8kt87e} implies that \(\mu_1\in D\) and in consequence \(\mu_1=\min(D)\). Note that \(\mu_1\in\varphi(\NestPer(0))\). Lemma \ref{uud8kt87e} implies also that \(D=\Spread(\{\mu_1\},\xi)\cap\mathbb{N}(\mu_1,\mu_2)\). Obviously \(\omega(0,D,\xi)\leq 1\).
    
    Then it is easy to see that \((D,\xi) \in\NestPer(1)\mbox{.}\)
    From Lemma \ref{iiubn334hd} we have that \(\vert K\vert\leq \hh\). The lemma follows. 
This ends the proof.
\end{proof}
    
Let 
\[\begin{split}
    \Omega=\{m\in\mathbb{N}_1\mid \mbox{ for every }(D,\xi)\in\NestPer(m-1)\mbox{ we have that }\\\omega(m,\widehat D)\leq c_3\hh\mbox{, where }\widehat D=\sigma(\PalExt(D))\}\mbox{.}
\end{split}\]
\begin{remark}
Lemma \ref{iodi9efjg} implies that \(1\in\Omega\). 
The main result of Section \ref{sec8903jhd} will show that \(\Omega=\mathbb{N}_1\); see Corollary \ref{yysbn44df}
\end{remark}

\subsection{Inside NPS palindromic extension}
Given \(n_1,\xi\in\mathbb{N}(1,\vert\zz\vert)\), let \(\perProlong(n_1,\xi)=n_2\in\mathbb{N}(1,\vert\zz\vert)\) be such that \(\xi\in\Period(\zz[n_1,n_2])\) and if \(n_2< \vert\zz\vert\) then \(\xi\not\in\Period(\zz[n_1,n_2+1])\). We call \(\zz[n_1,n_2]\) the \emph{periodic prolongation} of \((n_1,\xi)\).

For the nested palindromic structure \((D,\xi)\in\NestPer(m)\), we show that the palindromic extensions of \(D\) inside of the periodic prolongation of \((\min(D),\xi)\) are a subset of a spread of palindromic extensions of a bottom \(C_0\) of \((D,\xi)\). 
This result will allow us to restrict our attention only to a bottom of \(D\) when identifying the palindromic extensions of \(D\) inside of a periodic prolongation.

\begin{lemma}
    \label{ppdio08bg}
    If \(m\in \mathbb{N}_1\), \((D,\xi)\in\NestPer(m)\), \(\mu_1=\min(D)\), 
    \(C_0=D\cap \mathbb{N}(\mu_1,\mu_1+\xi-1)\),
    \(\mu_3=\perProlong(\mu_1,\xi)\),
    \(\widehat D=\sigma(\PalExt(D))\cap \mathbb{N}(\mu_1,\mu_3)\), 
    and
     \[\widehat C_0=\sigma(\PalExt(C_0))\cap \mathbb{N}(\mu_1,\mu_3)\]
     then 
    \(\widehat D\subseteq \Spread(\widehat C_0,\xi)\).    
\end{lemma}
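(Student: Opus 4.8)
The plan is to show that every palindromic extension of a position $n\in D$ that lands inside $\mathbb{N}(\mu_1,\mu_3)$ can be "shifted back" by a multiple of $\xi$ to a palindromic extension of a position in the bottom $C_0$, landing at a point of $\widehat C_0$. Fix $n\in D$ and a tuple $(n,p_1,p_2,\alpha)\in\PalExt(n)$ with $s:=\sigma(n,p_1,p_2,\alpha)=n-1+|(p_1p_2)^\alpha p_1|\le\mu_3$. Write $n=\mu_1+k\xi+r$ with $0\le r<\xi$, so that $n':=\mu_1+r\in C_0$ (this uses $C_0=D\cap\mathbb{N}(\mu_1,\mu_1+\xi-1)$ and $D=\Spread(D,\xi)\cap\Close(D)$, which forces $n'\in D$, hence $n'\in C_0$). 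The target is $s'=\sigma(n',p_1,p_2,\alpha)=s-k\xi$; then $s\in\Spread(\{s'\},\xi)$, and it remains to verify that $s'\in\widehat C_0$, i.e. that $(n',p_1,p_2,\alpha)\in\PalExt(n')$ and $s'\le\mu_3$ (the latter being clear since $s'\le s\le\mu_3$ and $s'\ge n'\ge 1$).

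The crux is transferring the tuple from $n$ to $n'$. First I would use that $\xi\in\Period(\zz[\mu_1,\mu_3])$: since $\mu_3=\perProlong(\mu_1,\xi)$ and $n-1+|(p_1p_2)^\alpha p_1|=s\le\mu_3$, the factor $\zz[n,s]=(p_1p_2)^\alpha p_1$ occurs inside the $\xi$-periodic block $\zz[\mu_1,\mu_3]$; because $n\equiv n'\pmod\xi$ and $n'\ge\mu_1$, the window $\zz[n',n'+s-n]$ equals $\zz[n,s]=(p_1p_2)^\alpha p_1$. Hence $(p_1p_2)^\alpha p_1\in\Prefix(\zz[n',|\zz|])$. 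Second, I must check the side conditions in the definition of $\PalExt_1$ and $\PalExt_2$: namely the status of $p_0:=p_1p_2p_1$ with respect to $\FirmPalPrefix(n')$ versus $\FirmPalPrefix(n)$. Here I would argue that $\zz[n',|\zz|]$ and $\zz[n,|\zz|]$ agree on a prefix of length $\ge\mu_3-n+1\ge |(p_1p_2)^\alpha p_1|$, and since $\alpha\ge 1$ gives $(p_1p_2)p_1=p_0\in\Prefix$ of both, and whether $p_0\in\FirmPalPrefix$ depends only on whether $(p_1p_2)^2p_1$ is a prefix (via Property \ref{uhe76fgt}, the palindromic couple decomposition of $p_0$ is unique) — so "$p_0\in\FirmPalPrefix(n)$ iff $(p_1p_2)^2p_1\not\in\Prefix(\zz[n,|\zz|])$" and likewise for $n'$. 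If $\alpha\ge 2$ then $(p_1p_2)^2p_1$ is a prefix of $\zz[n,s]$ of length $\le\mu_3-n+1$, so it is a prefix at both $n$ and $n'$, and $p_0\notin\FirmPalPrefix$ at either, placing the tuple in $\PalExt_1$ for both. If $\alpha=1$, I would split on whether $(p_1p_2)^2p_1\in\Prefix(\zz[\mu_1,\mu_3])$ shifted appropriately: if its length exceeds $\mu_3-n+1$ the periodicity no longer forces agreement, and here is where I must invoke $\mu_3=\perProlong(\mu_1,\xi)$ together with the fact that $n,n'$ differ by a multiple of $\xi$ and both tails start inside the same periodic block — the key point being that "$(p_1p_2)^2p_1\in\Prefix(\zz[n,|\zz|])$" is equivalent to "$\zz[\mu_1,\mu_3]$ extends periodically far enough past $n$", which is a condition symmetric in $n$ and $n'$.

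The main obstacle I anticipate is exactly this last case-distinction at $\alpha=1$: one must be careful that shifting $n$ to $n'$ does not change membership in $\FirmPalPrefix$, because a firm palindromic prefix at $n$ is one whose periodic repetition $(p_1p_2)^2p_1$ is \emph{not} a prefix at $n$, and a priori the extra $\xi$ worth of room available at $n'$ (which sits further left) could make $(p_1p_2)^2p_1$ a prefix at $n'$ while it fails at $n$. I would resolve this by noting that any such extension of length $>\mu_3-n+1$ starting at $n$ (or at $n'$) would have period $|p_1p_2|=\xi'$ dividing — via Property \ref{hhd73d22d} — a multiple of $\xi$, and combine with the maximality in the definition of $\perProlong(\mu_1,\xi)$; if $\xi'\ne\xi$, Fine–Wilf (Theorem \ref{dyjf8e9ekjfdi}) on the overlap forces a smaller common period, contradicting $\order(p_0)<2$, while if the extension genuinely runs past $\mu_3$, it runs past $\mu_3$ for \emph{both} starting points since they are congruent mod $\xi$ and $\zz[\mu_1,\mu_3+1]$ is the fixed obstruction. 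Once this symmetry is established, the map $(n,p_1,p_2,\alpha)\mapsto(n',p_1,p_2,\alpha)$ sends $\PalExt(D)\cap(\text{extensions}\le\mu_3)$ into $\PalExt(C_0)$ with $\sigma$ shifting by a multiple of $\xi$, which is precisely $\widehat D\subseteq\Spread(\widehat C_0,\xi)$.
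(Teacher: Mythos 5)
Your overall route matches the paper's: pick the unique $n'\in C_0$ with $n\equiv n'\pmod\xi$, use $\xi\in\Period(\zz[\mu_1,\mu_3])$ to conclude that $(p_1p_2)^\alpha p_1\in\Prefix(\zz[n',\vert\zz\vert])$ (both windows lie inside the $\xi$-periodic block since $s\le\mu_3$ and $n'\le n$), and deduce $s\in\Spread(\{s'\},\xi)$ with $s'=s-k\xi$. You also correctly flag the subtlety that the paper's ``obviously'' glosses over: whether the very tuple $(n',p_1,p_2,\alpha)$ still lands in $\PalExt(n')$, given that $\FirmPalPrefix$ looks past $\mu_3$.

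Where the proposal goes wrong is in both over-committing to that subtlety and then failing to resolve it. The phrase ``$s'\in\widehat C_0$, i.e.\ $(n',p_1,p_2,\alpha)\in\PalExt(n')$'' is not an equivalence: $\widehat C_0=\sigma(\PalExt(C_0))\cap\mathbb{N}(\mu_1,\mu_3)$ asks only for \emph{some} tuple over $n'$ whose $\sigma$-value equals $s'$, and by the paper's Remark following the definition of $\PalExt$ such a tuple exists for every palindromic prefix $p_0$ of $\zz[n',\vert\zz\vert]$ (so $\sigma(\PalExt(n'))$ is precisely the set of right endpoints of palindromic prefixes of $\zz[n',\vert\zz\vert]$). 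Once you have $(p_1p_2)^\alpha p_1\in\Prefix(\zz[n',\vert\zz\vert])$ from periodicity, you are already done; the case analysis on $\FirmPalPrefix$ is unnecessary. As for the $\alpha=1$ argument you actually write, it does not close: Property \ref{hhd73d22d} requires a word of length at least $4\xi$, which you never exhibit; the claim that an extension running past $\mu_3$ at $n$ also runs past $\mu_3$ at $n'$ is false in general, since the right endpoint shifts left by $k\xi$; and the Fine--Wilf step is missing the needed length hypothesis. So the $\alpha=1$ subcase is a genuine gap. The fix is not a harder combinatorial argument but to drop the exact-tuple requirement in favour of the Remark-based observation above, which is also how the paper's one-line justification should be read.
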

\begin{proof}
    Suppose that there is \(n\in D\) and \((n,p_1,p_2,\alpha)\in\PalExt(n)\) such that \(\sigma(n,p_1,p_2,\alpha)\in\mathbb{N}(\mu_1,\mu_3)\) and \(\sigma(n,p_1,p_2,\alpha)\not\in \Spread(\widehat C_0,\xi)\). 
Let \(n_0\in C_0\) be such that \begin{equation}\label{hy8dj33f}n\equiv n_0\pmod{\xi}\mbox{.}\end{equation}
Realize that \(C_0\in\bottom(D,\xi)\).  Hence \(n_0\) exists and is unique. From the definition of \(C_0\) it is clear that \(n_0\leq n\).
Then obviously \((n_0,p_1,p_2,\alpha)\in\PalExt(n_0)\) and \(\sigma(n_0,p_1,p_2,\alpha)\in\mathbb{N}(\mu_1,\mu_3)\), since \(\xi\in\Period(\zz[\mu_1,\mu_3])\). Let \[\widehat n_0=\sigma(n_0,p_1,p_2,\alpha)=n_0-1+\vert(p_1p_2)^{\alpha}p_1\vert\in\widehat C_0\] and \[\widehat n=\sigma(n,p_1,p_2,\alpha)=n-1+\vert(p_1p_2)^{\alpha}p_1\vert\in\widehat D\mbox{.}\] From (\ref{hy8dj33f}) it follows that \(\widehat n\in \Spread(\{\widehat n_0\},\xi)\), which is a contradiction to our assumption. We conclude that \(\widehat D\subseteq \Spread(\widehat C_0,\xi)\). 
This ends the proof.
\end{proof}

We show that all palindromic extensions \(\widehat C_0\) of the bottom \(C_0\) of NPS \((D,\xi)\in\NestPer(m)\) inside of the periodic prolongation of \((\min(D),\xi)\) are a subset of the spread of a subset \(G\) of \(\widehat C_0\) such that \(G\) has a ``short'' diameter (\(\leq c_1\xi\)). 
\begin{lemma}
    \label{uuidje933h}
        If \(m\in \mathbb{N}_1\), \((D,\xi)\in\NestPer(m)\), \(\mu_1=\min(D)\), \(\mu_3=\perProlong(\mu_1,\xi)\), 
    \(C_0=D\cap \mathbb{N}(\mu_1,\mu_1+\xi-1)\),    
     \(\widehat C_0=\sigma(\PalExt(C_0))\cap \mathbb{N}(\mu_1,\mu_3)\),
     and \(G=\widehat C_0\cap \mathbb{N}(\mu_1,\mu_1+c_1\xi-1)\),
     then 
    \(\widehat C_0\subseteq \Spread(G,\xi)\).    
\end{lemma}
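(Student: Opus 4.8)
The plan is to take an arbitrary $\widehat n\in\widehat C_0$ and prove $\widehat n\in\Spread(G,\xi)$; this gives the claimed inclusion. If $\widehat n<\mu_1+c_1\xi$ this is immediate, since then $\widehat n\in\widehat C_0\cap\mathbb{N}(\mu_1,\mu_1+c_1\xi-1)=G$, so assume $\widehat n\geq\mu_1+c_1\xi$. Choose $n\in C_0$ and a tuple $(n,p_1,p_2,\alpha)\in\PalExt(n)$ with $\sigma(n,p_1,p_2,\alpha)=\widehat n$, and put $w=(p_1p_2)^{\alpha}p_1$, so that $w\in\Prefix(\zz[n,\vert\zz\vert])$ and $\zz[n,\widehat n]=w$. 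As $n\in C_0\subseteq\mathbb{N}(\mu_1,\mu_1+\xi-1)$, the word $w$ is long: $\vert w\vert=\widehat n-n+1\geq(c_1-1)\xi+2$. Moreover $\mathbb{N}(n,\widehat n)\subseteq\mathbb{N}(\mu_1,\mu_3)$ and $\xi\in\Period(\zz[\mu_1,\mu_3])$ by definition of $\mu_3=\perProlong(\mu_1,\xi)$, hence $\xi\in\Period(w)$ and so $\mper(w)\leq\xi$.

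First I would rule out the ``short'' tuples, namely the case $\alpha=1$ and the possibility $(n,p_1,p_2,\alpha)\in\PalExt_2(n)$. In both situations $w$ is a non-periodic palindrome, that is $\order(w)<2$: in the first case $w=p_1p_2p_1$ with $(p_1,p_2)\in\PalCouple$, and in the second case $w$ is an element of $\FirmPalPrefix(n)\subseteq\NPP(\zz[n,\vert\zz\vert])$. But then $\vert w\vert<2\mper(w)\leq2\xi$, which contradicts $\vert w\vert\geq(c_1-1)\xi+2$ because $c_1=5$. Hence $(n,p_1,p_2,\alpha)\in\PalExt_1(n)$ with $\alpha\geq2$. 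Put $\xi_2=\vert p_1p_2\vert$; since $w$ is a prefix of $(p_1p_2)^{\infty}$ we have $\xi_2\in\Period(w)$, and clearly $p_1p_2p_1\in\Prefix(w)$ and $\vert p_1\vert<\xi_2$.

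The crucial step is to pit the two periods $\xi$ and $\xi_2$ of $w$ against the non-periodicity of $(p_1,p_2)$ via Fine and Wilf. Whether $\xi_2\leq\xi$ or $\xi_2>\xi$, one has $\vert w\vert>\xi+\xi_2\geq\xi+\xi_2-\gcd(\xi,\xi_2)$ (using $\vert w\vert\geq(c_1-1)\xi+2$ in the first case, and $\vert w\vert\geq\alpha\xi_2\geq2\xi_2$ in the second), so Theorem \ref{dyjf8e9ekjfdi} gives $d:=\gcd(\xi,\xi_2)\in\Period(w)$, whence $d\in\Period(p_1p_2p_1)$. If $d<\xi_2$, then $d\mid\xi_2$ forces $d\leq\xi_2/2\leq\vert p_1p_2p_1\vert/2$, so $\mper(p_1p_2p_1)\leq\vert p_1p_2p_1\vert/2$, i.e.\ $\order(p_1p_2p_1)\geq2$, contradicting $(p_1,p_2)\in\PalCouple$. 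Therefore $d=\xi_2$, that is $\xi_2\mid\xi$ (in particular $\xi_2\leq\xi$); write $k=\xi/\xi_2\in\mathbb{N}_1$.

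Finally I would retract $\widehat n$ by a multiple of $\xi$. Let $j$ be the least positive integer with $\widehat n-j\xi<\mu_1+c_1\xi$; then $\mu_1+(c_1-1)\xi\leq\widehat n-j\xi<\mu_1+c_1\xi$, so $\widehat n-j\xi\in\mathbb{N}(\mu_1,\mu_3)$ (it is strictly below $\widehat n\leq\mu_3$ and above $\mu_1$). Set $\alpha'=\alpha-jk$; since $\xi_2\mid\xi$ this is an integer, and $\sigma(n,p_1,p_2,\alpha')=n-1+\alpha'\xi_2+\vert p_1\vert=\widehat n-j\xi$. The estimate $\alpha'\xi_2+\vert p_1\vert=(\widehat n-j\xi)-n+1\geq(c_1-2)\xi+2$ together with $\vert p_1\vert<\xi_2\leq\xi$ and $c_1=5$ gives $\alpha'\geq3$, while $j,k\geq1$ gives $\alpha'\leq\alpha-1$; hence $\alpha'\in\mathbb{N}(1,\alpha)$ and Lemma \ref{uud8kt87e} yields $(n,p_1,p_2,\alpha')\in\PalExt(n)$. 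Consequently, recalling $n\in C_0$, $\widehat n-j\xi\in\sigma(\PalExt(C_0))\cap\mathbb{N}(\mu_1,\mu_3)=\widehat C_0$, and since $\widehat n-j\xi<\mu_1+c_1\xi$ also $\widehat n-j\xi\in G$; as $\widehat n\equiv\widehat n-j\xi\pmod{\xi}$, we get $\widehat n\in\Spread(G,\xi)$. The step I expect to be the main obstacle is the case $\alpha\geq2$ and, inside it, extracting the divisibility $\xi_2\mid\xi$ from Fine and Wilf; it is exactly this, together with keeping $\alpha'\geq1$ in the retraction, that makes it necessary to choose the constant $c_1$ with a little slack.
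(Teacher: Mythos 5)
Your proof is correct and follows essentially the same approach as the paper's: both establish the divisibility $\vert p_1p_2\vert\mid\xi$ (you via an inline Fine--Wilf argument combined with the non-periodicity of $p_1p_2p_1$, the paper by invoking Property \ref{hhd73d22d}, which is itself proved that way) and then reduce $\alpha$ so that the palindromic extension lands in $G$. The only cosmetic differences are that you explicitly discard the $\alpha=1$ and $\PalExt_2(n)$ cases (the paper leaves these implicit, as they are incompatible with $\widehat n-n>4\xi$) and that you retract $\widehat n$ by multiples of $\xi$ rather than selecting $\alpha_0\in\mathbb{N}(1,\delta)$ with $\alpha_0\equiv\alpha\pmod{\delta}$; both choices land in $G$.
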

\begin{proof}
Suppose  \((n,p_1,p_2,\alpha)\in \PalExt(C_0)\) and \(\widehat n=\sigma(n,p_1,p_2,\alpha)\in\widehat C_0\setminus G\). Since \(\max(C_0)\leq \mu_1+\xi-1\) and \(\widehat n> \mu_1+c_1\xi-1\) it is clear that \(\widehat n-n> (c_1-1)\xi\). 
It follows that \(\frac{\xi}{\vert p_1p_2\vert}=\delta \in\mathbb{N}_1\), because \(\xi\in\Period(\zz[n,\widehat n])\); see Property \ref{hhd73d22d}. This implies that there is \(\alpha_0\in\mathbb{N}(1,\delta)\) such that \(\alpha \vert p_1p_2\vert-\alpha_0\vert p_1p_2\vert\equiv 0\pmod{\xi}\). 
Then clearly \((n,p_1,p_2,\alpha_0)\in\PalExt(C_0)\), \(\sigma(n,p_1,p_2,\alpha_0)\in G\), and \(\widehat n-\sigma(n,p_1,p_2,\alpha_0)\equiv 0\pmod{\xi}\). 
It follows that  \(\widehat n\in\Spread(G,\xi)\).
This completes the proof.
\end{proof}

The main result of this subsection says that the palindromic extensions of an NPS cluster of degree \(m\in\Omega\) inside of the periodic prolongation are a subset of an NPS cluster of degree \(m+1\).
\begin{theorem}
    \label{f8u556ju}    
     If \(m\in \Omega\), \((D,\xi)\in\NestPer(m)\), \(\mu_1=\min(D)\), \(\mu_3=\perProlong(\mu_1,\xi)\),  and
	\(\widehat D=\sigma(\PalExt(D)) \cap \mathbb{N}(\mu_1,\mu_3)\)
     then 
    there is \(G\subseteq \mathbb{N}(\mu_1,\mu_3)\) such that \(\widehat D\subseteq G\) and 
     \((G,\xi)\in\NestPer(m+1)\).
\end{theorem}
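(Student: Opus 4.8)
The plan is to build $G$ from a \emph{bottom} of $(D,\xi)$ and to verify $(G,\xi)\in\NestPer(m+1)$ through the remark characterising $\NestPer(m+1)$ as the pairs $(G,\xi)\in\widetilde\NestPer$ with $\omega(m,G,\xi)\le\theta(m+1)$. Set $C_0=D\cap\mathbb{N}(\mu_1,\mu_1+\xi-1)$, $\widehat C_0=\sigma(\PalExt(C_0))\cap\mathbb{N}(\mu_1,\mu_3)$, and $G'=\widehat C_0\cap\mathbb{N}(\mu_1,\mu_1+c_1\xi-1)$ (the set called $G$ in Lemma \ref{uuidje933h}). From $(D,\xi)\in\widetilde\NestPer$ we get $\xi\in\Period(\zz[\mu_1,\max(D)])$, and since $\mu_3=\perProlong(\mu_1,\xi)$ is by definition the largest index with $\xi\in\Period(\zz[\mu_1,\mu_3])$, it follows that $\max(D)\le\mu_3$ and $\xi\in\Period(\zz[\mu_1,\mu_3])$. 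Lemma \ref{ppdio08bg} gives $\widehat D\subseteq\Spread(\widehat C_0,\xi)$ and Lemma \ref{uuidje933h} gives $\widehat C_0\subseteq\Spread(G',\xi)$; since spreading is idempotent, $\widehat D\subseteq\Spread(G',\xi)$. I would then \emph{define}
\[G=\Spread(G',\xi)\cap\mathbb{N}(\mu_1,\mu_3),\]
so that $\widehat D\subseteq G\subseteq\mathbb{N}(\mu_1,\mu_3)$ is immediate; this is the easy half of the statement.

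Next I would verify $(G,\xi)\in\widetilde\NestPer$. Every element of $G'$ lies both in $\widehat C_0\subseteq\mathbb{N}(\mu_1,\mu_3)$ and in $\Spread(G',\xi)$, so $G'\subseteq G$, hence $\Spread(G,\xi)=\Spread(G',\xi)$; a short computation then gives $G=\Spread(G,\xi)\cap\Close(G)$. Using the one-letter palindromic prefix of $\zz[\mu_1,\vert\zz\vert]$ (cf.\ the remark before Lemma \ref{uud8kt87e}) one sees $\mu_1\in\sigma(\PalExt(C_0))$, so $\mu_1\in G$, whence $G\ne\emptyset$, $\min(G)=\mu_1$, and $\max(G)\le\mu_3$; consequently $\xi\in\Period(\zz[\min(G),\max(G)])$ since periods pass to prefixes. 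Thus $(G,\xi)\in\widetilde\NestPer$, and it remains to prove $\omega(m,G,\xi)\le\theta(m+1)$.

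The core estimate is a bound on $\omega(m,\widehat C_0)$. Because $C_0\in\Cut(D,\xi)$ and $(D,\xi)\in\NestPer(m)$, there is $M\subseteq\NestPer(m-1)$ with $\vert M\vert\le\theta(m)$ and $C_0\subseteq\widetilde\varphi(M)$, so $\sigma(\PalExt(C_0))\subseteq\bigcup_{(C,\xi_1)\in M}\sigma(\PalExt(C))$; since $m\in\Omega$, each $\omega(m,\sigma(\PalExt(C)))\le c_3\hh$, and as $\omega(m,\cdot)$ is monotone under inclusion and subadditive over finite unions (both immediate from Property \ref{uj499b} by intersecting clusters with $\Close(\cdot)$), $\omega(m,\widehat C_0)\le\omega(m,\sigma(\PalExt(C_0)))\le\vert M\vert c_3\hh\le c_3\hh\,\theta(m)$. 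Now fix a nonempty $\overline G\in\Cut(G,\xi)$, so $\diam(\overline G)\le\xi$. Writing each $g\in\overline G\subseteq\Spread(G',\xi)$ as $g=g'+\alpha\xi$ with $g'\in G'\subseteq\mathbb{N}(\mu_1,\mu_1+c_1\xi-1)$, the shift $\alpha$ ranges over at most $c_1+1$ integers, because $\overline G$ spans at most $\xi$ integers while $G'$ spans at most $c_1\xi$; hence $\overline G=\bigcup_\alpha H_\alpha$ with $H_\alpha=\overline G\cap\add(G',\alpha\xi)$, where $\add(H_\alpha,-\alpha\xi)\subseteq G'\subseteq\widehat C_0$ and $H_\alpha\subseteq\mathbb{N}(\mu_1,\mu_3)$. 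Since $\zz[\mu_1,\mu_3]$ has period $\xi$ and $\alpha\xi$ is a multiple of $\xi$, Property \ref{euu9cb2f1}, applied to every cluster of a minimal cover of $\add(H_\alpha,-\alpha\xi)$, yields $\omega(m,H_\alpha)=\omega(m,\add(H_\alpha,-\alpha\xi))\le\omega(m,\widehat C_0)\le c_3\hh\,\theta(m)$. Summing over the $\le c_1+1\le 2c_1$ pieces gives $\omega(m,\overline G)\le 2c_1c_3\hh\,\theta(m)=\theta(m+1)$, so $\omega(m,G,\xi)\le\theta(m+1)$ and $(G,\xi)\in\NestPer(m+1)$, completing the proof.

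The steps I expect to demand the most care are in the third paragraph: first, justifying that $\overline G$ decomposes into only $O(c_1)$ translates of subsets of $\widehat C_0$ by multiples of $\xi$ — a residues-in-a-window count, in which a single period-window of $G'$ may straddle the $\xi$-window of $\overline G$ and therefore contribute two pieces, which is why $c_1+1$ (rather than $c_1$) appears — and second, checking the hypotheses of Property \ref{euu9cb2f1} for the shifts, namely that a cluster and its $\alpha\xi$-translate both lie inside the periodic prolongation $\zz[\mu_1,\mu_3]$ so that the corresponding factors of $\zz$ agree.
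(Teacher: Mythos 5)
Your proof is correct, and it reaches the same target set $G$ as the paper (the paper's $G_0$ is obtained by folding your $G'$ into a single $\xi$-window, so $\Spread(G_0,\xi)=\Spread(G',\xi)$), but the route to the bound $\omega(m,G,\xi)\le\theta(m+1)$ is genuinely different. Both arguments begin identically: set $C_0=D\cap\mathbb{N}(\mu_1,\mu_1+\xi-1)$, pass through Lemmas \ref{ppdio08bg} and \ref{uuidje933h}, and combine the $\theta(m)$-sized degree-$(m-1)$ cover of $C_0$ with the $c_3\hh$ bound provided by $m\in\Omega$. They diverge at the point where a cut of $G$ must be bounded. The paper applies Lemma \ref{uid93jjd} to each degree-$m$ piece $\widehat C_{i,j}$ sitting in the $c_1\xi$-window, producing a \emph{bottom} $G_0\subseteq\mathbb{N}(\mu_1,\mu_1+\xi-1)$ with $\omega(m,G_0)\le c_1c_3\hh\,\theta(m)$, and then invokes Lemma \ref{d8733j5r9} to pass from the bottom to an arbitrary cut at the cost of a factor $2$. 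You instead keep $G'$ in its $c_1\xi$-window, establish $\omega(m,\widehat C_0)\le c_3\hh\,\theta(m)$ directly by monotonicity and subadditivity of $\omega(m,\cdot)$ (both via Property \ref{uj499b}), and then bound a cut $\overline G$ by a residue count: $\overline G$ meets at most $c_1+1$ translates $\add(G',\alpha\xi)$, and each piece pulls back to a subset of $\widehat C_0$ via Property \ref{euu9cb2f1}. This bypasses Lemmas \ref{uid93jjd} and \ref{d8733j5r9} entirely, paying $c_1+1$ rather than $2$ in that step, which is harmless since $c_1+1\le 2c_1$ and so $(c_1+1)c_3\hh\,\theta(m)\le\theta(m+1)$. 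A small bonus of your write-up is the explicit verification that $(G,\xi)\in\widetilde\NestPer$ (nonemptiness via the one-letter prefix, the spread-closure identity, and the period condition), which the paper leaves implicit.
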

\begin{proof}
Let \(C_0=D\cap\mathbb{N}(\mu_1,\mu_1+\xi-1)\) and let \(\widehat C_0=\sigma(\PalExt(C_0))\cap\mathbb{N}(\mu_1,\mu_3)\).
From the definition of \(\NestPer(m)\) we have that \(\omega(m-1,C_0)\leq \theta(m)\). 
It follows that there are \(g\leq \theta(m)\) and \(C_i\in\varphi(\NestPer(m-1))\) such that \(C_0\subseteq\bigcup_{i=1}^gC_i\) and \(C_i\subseteq\mathbb{N}(\mu_1,\mu_1+\xi-1)\), where \(i\in\mathbb{N}(1,g)\).

Let \[\widehat C_i=\sigma(\PalExt(C_i))\cap\mathbb{N}(\mu_1,\min\{\mu_3,\mu_1+c_1\xi-1\})\mbox{, where  }i\in\mathbb{N}(1,{g})\mbox{.}\]
Lemma \ref{uuidje933h} implies that \begin{equation}\label{ndj88fdhu} \widehat C_0\subseteq \bigcup_{i=1}^g\Spread(\widehat C_i,\xi)\mbox{.}\end{equation}


Because \(m\in\Omega\), we have that \(\omega(m,\sigma(\PalExt(C_i)))\leq c_3\hh\). Then Property \ref{uj499b}  implies that \(\omega(m,\widehat C_i)\leq c_3\hh\).
  It follows that there are \(\widehat C_{i,j}\in\varphi(\NestPer(m))\) such that \(j\in\mathbb{N}(1,c_3\hh)\),  \(\widehat C_{i,j}\subseteq\mathbb{N}(\mu_1,\mu_1+c_1\xi-1)\) and \begin{equation}\label{cnbgfbv65}
      \widehat C_i\subseteq\bigcup_{j=1}^{ c_3\hh}\widehat C_{i,j}\mbox{.}
  \end{equation}

Let \(\widehat C_{i,j,\delta}\subseteq \mathbb{N}(\mu_1,\mu_1+\xi-1)\) be such that \(\delta\in\mathbb{N}(1,c_1)\), 
\(\widehat C_{i,j,\delta}\in\varphi(\NestPer(m))\), and \begin{equation}\label{ood93jfb3}\widehat C_{i,j}\subseteq\bigcup_{\delta=1}^{c_1}\Spread(\widehat C_{i,j,\delta},\xi)\mbox{.}\end{equation} Lemma \ref{uid93jjd} asserts that such \(\widehat C_{i,j,\delta}\) exist.
Let \(G_0=\bigcup_{i,j,\delta}\widehat C_{i,j,\delta}\). It is easy to see that \(G_0\subseteq \mathbb{N}(\mu_1,\mu_1+\xi-1)\). Since \(i\leq\theta(m)\), \(j\leq c_3\hh\), and \(\delta\leq c_1\), we have that \begin{equation}\label{iid8bdh2}\omega(m,G_0)\leq c_1c_3\hh\theta(m)\mbox{.}\end{equation}
Let \(G=\Spread(G_0,\xi)\cap\mathbb{N}(\mu_1,\mu_3)\). Lemma \ref{d8733j5r9} and (\ref{iid8bdh2}) imply that 
\begin{equation}
    \label{hh8cb2gfb3}
\omega(m,G,\xi)\leq 2c_1c_3\hh\theta(m)\leq\theta(m+1)\mbox{.}\end{equation}

From (\ref{ndj88fdhu}), (\ref{cnbgfbv65}), and (\ref{ood93jfb3}) we have that 
    \(\widehat C_0\subseteq \Spread(G_0,\xi)\) and consequently \(\Spread(\widehat C_0,\xi)\subseteq \Spread(G_0,\xi)\). 
Lemma \ref{ppdio08bg} implies that \(\widehat D\subseteq\Spread(\widehat C_0,\xi)\). It follows that \(\widehat D\subseteq G\). 
 Then from (\ref{hh8cb2gfb3}) we conclude that \((G,\xi)\in\NestPer(m+1)\). This ends the proof.
\end{proof}

\subsection{Runs and canonical palindromes}
In this subsection we define runs and we derive an upper bound for the number of distinct runs covering a given position of the factor \(\zz\). 
 Let \[\begin{split}\Run=\{(\nu_1,\nu_2,\xi)\mid \nu_1\leq\nu_2\in \mathbb{N}(1,\vert \zz\vert)\mbox{ and }\xi\in\Period(\zz[\nu_1,\nu_2])\mbox{ and }\\\nu_2-\nu_1+1\geq\xi\mbox{ and }\nu_2<\vert \zz\vert\implies\xi\not\in\Period(\zz[\nu_1,\nu_2+1])\\\mbox{ and }\nu_1>1\implies \xi\not\in\Period(\zz[\nu_1-1,\nu_2]\mbox{ and }\\\mbox{ there is }(p_1,p_2)\in\PalCouple\mbox{ such that }\vert p_1p_2\vert=\xi\mbox{ and }p_1p_2\in\Prefix(\zz[\nu_1,\nu_2])\}\mbox{.}\end{split}\] 

We call \((\nu_1,\nu_2,\xi)\in \Run\) a \emph{run}. 
The term ``run'' was already used in \cite{FrPuZa}. Our definition has the additional requirement that \(p_1p_2\in\Prefix(\zz[\nu_1,\nu_2])\) and consequently \(\zz[\nu_1,\nu_2]\in\Prefix((p_1p_2)^{\infty})\).

Given \((p_1,p_2)\in \PalCouple\), \(n_1\in\mathbb{N}(1,\vert \zz\vert)\) with \(p_1p_2\in\Prefix(\zz[n_1,\vert \zz\vert])\), 
let \(\pi(n_1,p_1,p_2)=\{\gamma_1,\gamma_2\}\subset\mathbb{Q}\) be such that \(n_3=n_1+\vert p_1p_2\vert-1\), \(n_2=n_3-\vert p_2\vert+1\), \[\gamma_1=\frac{n_1+n_2-1}{2}\in\mathbb{Q}\mbox{,}\quad\mbox{ and }\quad\gamma_2=\frac{n_2+n_3}{2}\in\mathbb{Q}\mbox{.}\]  
\begin{remark}
If \(\gamma\in\pi(n_1,p_1,p_2)\) then \(\gamma\in\mathbb{Q}\) is an integer or half-integer and \(\gamma\) is the center position of the palindrome \(p_1\) or \(p_2\). Note that if \(p_1=\epsilon\) then \(n_2=n_1\) and \(\gamma_1=n_1-\frac{1}{2}\).
\end{remark}

\begin{lemma}
    \label{ff7e6erd}
    If \(n_1\in\mathbb{N}(1,\vert \zz\vert)\), \((p_1,p_2)\in\PalCouple\), \(p_1p_2\in\Prefix(\zz[n_1,\vert \zz\vert])\), \(\xi=\vert p_1p_2\vert\) , and \(\{\gamma_1,\gamma_2\}=\pi(n_1,p_1,p_2)\) then \(\vert \gamma_1-\gamma_2\vert=\frac{\xi}{2}\in\mathbb{Q}\).
\end{lemma}
\begin{proof} Let \(n_3=n_1+\xi-1\) and \(n_2=n_3-\vert p_2\vert+1\).
    From the definition of \(\pi\) we have that
    \[\begin{split}
        \gamma_1-\gamma_2=\frac{n_1+n_2-1}{2}-\frac{n_2+n_3}{2}=\frac{n_1-1-n_3}{2}=\\\frac{n_1-1-(n_1+\xi-1)}{2}=-\frac{\xi}{2}\mbox{.}
    \end{split}\]
    The lemma follows. This ends the proof.
\end{proof}

Given \((\nu_1,\nu_2,\xi)\in\Run\), let \(\PalCouple(\nu_1,\xi)=(p_1,p_2)\in\PalCouple\) be such that 
 \(p_1p_2\in\Prefix(\zz[\nu_1,\vert \zz\vert])\) and \(\xi=\vert p_1p_2\vert\). The definition of a run asserts the existence of such \((p_1,p_2)\) and Property \ref{yuc79drx1} asserts that \((p_1,p_2)\) is unique.
 
Given \((\nu_1,\nu_2,\xi)\in\Run\), let
\[\begin{split}
    \RunPal(\nu_1,\nu_2,\xi)=\{(n_1,n_2)\in\mathbb{N}^2_1(\nu_1,\nu_2)\mid  (p_1,p_2)=\PalCouple(\nu_1,\xi)\mbox{ and } \\\frac{n_1+n_2}{2}\in\Spread(\pi(\nu_1,p_1,p_2),\xi)\}\mbox{.}
\end{split}\]

\begin{remark}
    Realize that if \((n_1,n_2)\in\RunPal(\nu_1,\nu_2,\xi)\) then \(\zz[n_1,n_2]\in \Pal^+\). 
\end{remark}
We call \(\zz[n_1,n_2]\) a \emph{canonical palindrome} of the run \((\nu_1,\nu_2,\xi)\in\Run\), where \((n_1,n_2)\in\RunPal(\nu_1,\nu_2,\xi)\).

\begin{lemma}
    \label{hhyd89j33b}
    If \((n_1,n_2),(n_1,n_3)\in\RunPal(\nu_1,\nu_2,\xi)\) then \(n_3-n_2\equiv 0\pmod{\xi}\).
\end{lemma}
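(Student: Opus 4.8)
The plan is a short case analysis driven by the two-point structure of $\pi(\nu_1,p_1,p_2)$, using Lemma~\ref{ff7e6erd} to control the distance between its two centers.

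First I would fix notation: let $(p_1,p_2)=\PalCouple(\nu_1,\xi)$ and $\{\gamma_1,\gamma_2\}=\pi(\nu_1,p_1,p_2)$. By the definition of $\RunPal(\nu_1,\nu_2,\xi)$, both $\tfrac{n_1+n_2}{2}$ and $\tfrac{n_1+n_3}{2}$ lie in $\Spread(\{\gamma_1,\gamma_2\},\xi)$, so there are indices $i,j\in\{1,2\}$ and integers $\alpha,\beta\in\mathbb{Z}$ with $\tfrac{n_1+n_2}{2}=\gamma_i+\alpha\xi$ and $\tfrac{n_1+n_3}{2}=\gamma_j+\beta\xi$. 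Subtracting the first equation from the second gives $\tfrac{n_3-n_2}{2}=(\gamma_j-\gamma_i)+(\beta-\alpha)\xi$.

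Then I would split into two cases. If $i=j$, then $\gamma_j-\gamma_i=0$, so $\tfrac{n_3-n_2}{2}=(\beta-\alpha)\xi$ and hence $n_3-n_2=2(\beta-\alpha)\xi\equiv 0\pmod{\xi}$. If $i\not=j$, then Lemma~\ref{ff7e6erd} yields $\vert\gamma_1-\gamma_2\vert=\tfrac{\xi}{2}$, whence $\gamma_j-\gamma_i=\pm\tfrac{\xi}{2}$ and therefore $n_3-n_2=\pm\xi+2(\beta-\alpha)\xi=(\pm 1+2(\beta-\alpha))\xi$, which is again a multiple of $\xi$. In both cases $n_3-n_2\equiv 0\pmod{\xi}$, as claimed.

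The only point requiring care is the half-integer bookkeeping: $\gamma_1,\gamma_2,\tfrac{n_1+n_2}{2},\tfrac{n_1+n_3}{2}$ may all be half-integers, but the spread only adds integer multiples of $\xi$, so the computation should be carried out cleanly over $\mathbb{Q}$. I expect no genuine obstacle here; indeed one even sees that $\gamma_1\not\equiv\gamma_2\pmod{\xi}$ because their difference is $\tfrac{\xi}{2}$, so the two arithmetic progressions comprising the spread are disjoint, though this remark is not needed for the statement.
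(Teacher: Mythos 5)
Your proof is correct and takes essentially the same approach as the paper: both invoke Lemma~\ref{ff7e6erd} to control the spacing $\vert\gamma_1-\gamma_2\vert=\xi/2$ and then conclude from the structure of the spread. The paper compresses your two cases into one line by observing directly that any two elements of $\Spread(\pi(\nu_1,p_1,p_2),\xi)$ differ by an integer multiple of $\xi/2$, so $\tfrac{n_2-n_3}{2}=\alpha\tfrac{\xi}{2}$ for some $\alpha\in\mathbb{Z}$ and hence $n_2-n_3=\alpha\xi$.
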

\begin{proof}
From Lemma \ref{ff7e6erd} it follows that there is \(\alpha\in\mathbb{Z}\) such that \[\frac{n_2+n_1}{2}-\frac{n_3+n_1}{2}=\frac{n_2-n_3}{2}=\alpha\frac{\xi}{2}\mbox{.}\]
The lemma follows.
\end{proof}

Given \(n\in \mathbb{N}(1,\vert \zz\vert)\), let
\[\begin{split}
    \CovPalAll(n)=&\{(n_1,n_2)\in \mathbb{N}^2(1,\vert \zz\vert)\mid n\in\mathbb{N}(n_1,n_2)\mbox{ and }\zz[n_1,n_2]\in\Pal^+\}\mbox{,}\\
\CovPalAllLeft(n)=&\{(n_1,n_2)\in \CovPalAll(n)\mid \frac{n_2+n_1}{2}\geq n\}\mbox{,}\\
\CovPalAllRight(n)=&\{(n_1,n_2)\in \CovPalAll(n)\mid \frac{n_2+n_1}{2}\leq n\}\mbox{,}\\
\CovPalEdge(n)=&\{(n_1,n_2)\in\CovPalAll(n)\mid \mbox{ if }n_1>1\mbox{ and }n_2<\vert \zz\vert\mbox{ then }\\ &\zz[n_1-1,n_2+1]\not\in\Pal^+\}\mbox{,}\\
\CovPalEdgeLeft(n)=&\CovPalAllLeft(n)\cap\CovPalEdge(n)\mbox{, and }\\
\CovPalEdgeRight(n)=&\CovPalAllRight(n)\cap\CovPalEdge(n)\mbox{.}
\end{split}\]

We call \((n_1,n_2)\in\CovPalAll(n)\) a \emph{covering palindrome} of \(n\) and we call \((n_1,n_2)\in\CovPalEdge(n)\) an \emph{edge covering palindrome} of \(n\).

Given \((n_1,n_2)\in\mathbb{N}(1,\vert\zz\vert)\) with \(\zz[n_1,n_2]\in\Pal^+\), let \(\toPalCouple(n_1,n_2)= (p_1,p_2)\in\PalCouple\) be such that:
\begin{itemize}
    \item If \(\zz[n_1,n_2]\in\FirmPalPrefix(n_1)\) then \((p_1,p_2)=(\epsilon,\zz[n_1,n_2])\).
    \item If \(\zz[n_1,n_2]\not\in\FirmPalPrefix(n_1)\) then \((p_1p_2)^{2}p_1\in\Prefix(\zz[n_1,\vert \zz\vert])\), \(\zz[n_1,n_2]=(p_1p_2)^{\alpha}p_1\), and \(\alpha\in\mathbb{N}_1\).
\end{itemize}
\begin{remark}
    Note that \(\toPalCouple(n_1,n_2)\) exists and is unique, see Lemma \ref{tudjkdi8545} and Property \ref{uhe76fgt}. Property \ref{dy889ekirf} asserts that \((\epsilon,\zz[n_1,n_2])\in\PalCouple\).
\end{remark}

Given \(n\in\mathbb{N}(1,\vert \zz\vert)\),  and \((n_1,n_2)\in\CovPalEdgeLeft(n)\), let \[\ctPC(n_1,n_2,n)=\toPalCouple(n,\Mirror(n_1,n_2,n))\in\PalCouple\mbox{.}\]
We call \(\ctPC(n_1,n_2,n)\) a \emph{center palindromic couple} of \((n_1,n_2)\).
Given \(n\in\mathbb{N}(1,\vert \zz\vert)\), \((p_1,p_2)\in\PalCouple\), let
\[\begin{split}\CovPalEdgeLeft(n,p_1,p_2)=&\{(n_1,n_2)\in\CovPalEdgeLeft(n)\mid (p_1,p_2)=\ctPC(n_1,n_2,n)\}\mbox{,}\\
\CovPalCmd(n,p_1,p_2)=&\{(n_1,n_2)\in\CovPalEdgeLeft(n,p_1,p_2)\mid \zz[n_1,n_2]\not\in\Factor((p_1p_2)^{\infty})\}\mbox{, }\\\CovPalCmd(n)=&\bigcup_{(p_1,p_2)\in\PalCouple}\CovPalCmd(n,p_1,p_2)\mbox{, and }\\\ctPC(n)=&\{(p_1,p_2)\in\PalCouple\mid \CovPalEdgeLeft(n,p_1,p_2)\not=\emptyset\}\mbox{.}\end{split}\]

If \((n_1,n_2)\in\CovPalCmd(n)\), then we call \(\zz[n_1,n_2]\) a \emph{compound covering palindrome}. 

\begin{remark}
Note that \(\CovPalCmd(n)\) is the set all of edge covering palindromes from \(\CovPalEdgeLeft(n)\), that are not a power of their center palindromic couple.
\end{remark}

The next lemma presents an upper bound for the number of compound covering palindromes covering a position \(n\).
\begin{lemma}
\label{u7m2m7ft8}
If \(n\in\mathbb{N}(1,\vert \zz\vert)\) then \(\vert \CovPalCmd(n)\vert\leq \hh\).
\end{lemma}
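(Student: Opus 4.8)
The plan is to show that every compound covering palindrome of $n$ determines, in an injective way, a distinct element of $\Gamma(n)$, so that Lemma~\ref{iiubn334hd} gives the bound $\hh$. Fix $(n_1,n_2)\in\CovPalCmd(n)$ and let $(p_1,p_2)=\ctPC(n_1,n_2,n)=\toPalCouple(n,\Mirror(n_1,n_2,n))\in\PalCouple$. By definition $(n_1,n_2)\in\CovPalEdgeLeft(n)$, so $\zz[n_1,n_2]\in\Pal^+$, the center $\tfrac{n_1+n_2}{2}\ge n$, and $\zz[n_1,n_2]$ is an edge palindrome; moreover $\zz[n_1,n_2]\notin\Factor((p_1p_2)^{\infty})$. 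I would first argue that $p_1p_2p_1\in\Prefix(\zz[n,\vert\zz\vert])$: indeed the mirror image of $\zz[n_1,n]$ inside the palindrome $\zz[n_1,n_2]$ is a prefix of $\zz[n,\vert\zz\vert]$, and $\toPalCouple$ of that prefix is $(p_1,p_2)$, so by Lemma~\ref{tudjkdi8545} and the definition of $\toPalCouple$ we get $p_1p_2p_1\in\Prefix(\zz[n,\vert\zz\vert])$ (when $p_1=\epsilon$ this reads $p_2\in\FirmPalPrefix(n)$, hence still a palindromic prefix). So $(p_1,p_2)\in\PalCouple$ together with $p_1p_2p_1\in\Prefix(\zz[n,\vert\zz\vert])$ is half of the membership condition for $\Gamma(n)$.

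Next I would show $(p_1,p_2)\in\Gamma(n)$, i.e. that if $p_1p_2p_1\in\FirmPalPrefix(n)$ then $p_1=\epsilon$. Suppose $p_1\ne\epsilon$ and $p_1p_2p_1\in\FirmPalPrefix(n)$. Then $\toPalCouple(n,\Mirror(n_1,n_2,n))=(p_1,p_2)$ forces, by the first clause in the definition of $\toPalCouple$ applied to a firm palindromic prefix, $(p_1,p_2)=(\epsilon,\cdot)$, a contradiction; so either way $(p_1,p_2)\in\Gamma(n)$. The map $(n_1,n_2)\mapsto(p_1,p_2)$ thus lands in $\Gamma(n)$, and $\vert\Gamma(n)\vert\le\hh$ by Lemma~\ref{iiubn334hd}. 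It remains to prove this map is injective on $\CovPalCmd(n)$. This is the crucial step: given the center palindromic couple $(p_1,p_2)$ I must recover $(n_1,n_2)$. The couple $(p_1,p_2)$ fixes the word $\zz[n,\Mirror(n_1,n_2,n)]=(p_1p_2)^{\beta}p_1$ for some $\beta$; the mirror structure of $\zz[n_1,n_2]$ around its center then determines $\zz[n_1,n_2]$ once we know how far the palindrome extends. I would use the ``edge'' condition and the hypothesis $\zz[n_1,n_2]\notin\Factor((p_1p_2)^{\infty})$ to pin down $(n_1,n_2)$ uniquely: the left arm $\zz[n_1,n]$ is the reverse of $(p_1p_2)^{\beta}p_1$, so $n_1=n-\vert(p_1p_2)^{\beta}p_1\vert+1$ and $n_2=\Mirror$-partner of $n_1$; distinctness of two compound palindromes with the same center couple would force different $\beta$, but then the shorter one is a factor of the longer one and (using Property~\ref{hhd73d22d} / Fine--Wilf on the overlap, together with the non-periodicity $\order(p_1p_2p_1)<2$) one derives that the longer palindrome lies in $\Factor((p_1p_2)^{\infty})$, contradicting its membership in $\CovPalCmd(n)$.

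The main obstacle I anticipate is precisely this injectivity argument: controlling what happens when a compound covering palindrome properly contains a shorter one sharing the same center couple, and extracting the contradiction with the ``not a factor of $(p_1p_2)^{\infty}$'' clause. The edge condition ($\zz[n_1-1,n_2+1]\notin\Pal^+$ unless we hit the boundary of $\zz$) is what should break ties among palindromes with a common center, and Property~\ref{hhd73d22d} is the tool that converts ``long overlap with period $\vert p_1p_2\vert$'' into ``genuinely periodic'', which is exactly what $\CovPalCmd$ excludes; I would carry out that overlap estimate carefully, checking the length hypothesis $\vert t\vert\ge 4\xi$ needed to invoke Property~\ref{hhd73d22d}. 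Once injectivity is established, the bound $\vert\CovPalCmd(n)\vert\le\vert\Gamma(n)\vert\le\hh$ is immediate.
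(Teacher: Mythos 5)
Your top-level strategy matches the paper's: reduce the count to the set of distinct center palindromic couples and invoke Lemma~\ref{iiubn334hd} (the paper uses it via $\ctPC(n)\subseteq\Gamma(n)$, you via $\Gamma(n)$ directly; same content). The first half of your argument, showing $(p_1,p_2)\in\Gamma(n)$, is essentially right, modulo a small gloss: if $p_1p_2p_1\in\FirmPalPrefix(n)$ you should first rule out $\zz[n,\Mirror(n_1,n_2,n)]=(p_1p_2)^{\alpha}p_1$ with $\alpha>1$ (which follows from the very definition of $\FirmPalPrefix$), and only then apply the first clause of $\toPalCouple$.

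The gap is in the injectivity step, which you yourself flag as the crux. The assertion ``the left arm $\zz[n_1,n]$ is the reverse of $(p_1p_2)^{\beta}p_1$, so $n_1=n-\vert(p_1p_2)^{\beta}p_1\vert+1$'' is false: the reverse of $\zz[n_1,n]$ is the suffix $\zz[\Mirror(n_1,n_2,n),n_2]$, whereas $(p_1p_2)^{\beta}p_1=\zz[n,\Mirror(n_1,n_2,n)]$ is the central factor; these are different intervals and $n_1$ is not pinned down this way. Likewise ``the shorter one is a factor of the longer one'' is neither established nor what is needed, and Property~\ref{hhd73d22d} is not the right tool. The mechanism the paper's decomposition $uav(p_1p_2)^{\alpha}p_1v^Rau^R$ encodes, and which your sketch does not isolate, is a \emph{symmetric break}: the maximal centered core $v(p_1p_2)^{\alpha}p_1v^R\in\Factor((p_1p_2)^{\infty})$ of the palindrome $\zz[n_1,n_2]$ sits symmetrically about the palindrome's center, so the palindrome identity forces $\zz[\mu-1]=\zz[\nu+1]$ and $\zz[\mu-1+\xi]=\zz[\nu+1-\xi]$ for $[\mu,\nu]$ the core and $\xi=\vert p_1p_2\vert$; thus the $\xi$-periodicity breaks at $\mu-1$ if and only if it breaks at $\nu+1$. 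Consequently the core must equal (up to the boundary of $\zz$) the unique maximal $\xi$-periodic run of $\zz$ containing $\zz[n,n+\vert p_1p_2p_1\vert-1]$, a run determined by $n$ and $(p_1,p_2)$ alone. That determines the center, hence the breaking letter $a$, and the edge condition then determines $u$ and hence $(n_1,n_2)$. Without the symmetric-break observation your injectivity claim is not established.
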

\begin{proof}
Let \((n_1,n_2)\in\CovPalCmd(n)\) and  \((p_1,p_2)=\ctPC(n_1,n_2,n)\mbox{.}\) 

Since \(r[n_1,n_2]\not\in\Factor((p_1p_2)^{\infty})\), 
it is easy to verify that there are unique \(a\in\Sigma\), \(\alpha\in\mathbb{N}_1\), and \(u,v\in\Sigma^*\) such that 
\(r[n_1,n_2]=uav(p_1p_2)^{\alpha}p_1v^Rau^R\), \(\overline n=\Mirror(n_1,n_2,n)\), \((p_1p_2)^{\alpha}p_1=r[n,\overline n]\), 
\(v(p_1p_2)^{\alpha}p_1v^R\in\Factor((p_1p_2)^{\infty})\), and  \(av(p_1p_2)^{\alpha}p_1v^Ra\not\in\Factor((p_1p_2)^{\infty})\).
We conclude that \(\vert \CovPalCmd(n,p_1,p_2)\vert\leq 1\).

Lemma \ref{iiubn334hd} implies that \(\vert \ctPC(n)\vert\leq \hh\) and consequently \(\vert \CovPalCmd(n)\vert\leq \hh\mbox{.}\)
The lemma follows. This ends the proof.
\end{proof}

Given \(n_1\leq n_2\in\mathbb{N}(1,\vert\zz\vert)\) with \(\zz[n_1,n_2]\in\Pal^+\), let 
    \(\pToRun(n_1,n_2)=(\nu_1,\nu_2,\xi)\in\Run\mbox{}\) be such that \(n_1,n_2\in\mathbb{N}(\nu_1,\nu_2)\), \(\xi=\vert p_1p_2\vert\), and \((p_1,p_2)=\toPalCouple(n_1,n_2)\). Obviously \(\pToRun(n_1,n_2)\) exists and is unique.
        
The next propositions present an upper bound for the number of runs ``constructed'' from edge covering palindromes covering a position \(n\).
\begin{proposition}
\label{yue7cv39j}
    If \(n\in\mathbb{N}(1,\vert r\vert)\) then     
    \(\vert \pToRun(\CovPalEdgeLeft(n))\vert\leq {2}\hh\mbox{.}\)    
\end{proposition}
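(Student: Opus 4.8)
The plan is to partition $\CovPalEdgeLeft(n)$ into the compound covering palindromes $\CovPalCmd(n)$ and the remainder $N=\CovPalEdgeLeft(n)\setminus\CovPalCmd(n)$, to bound $\lvert\pToRun(\CovPalCmd(n))\rvert$ and $\lvert\pToRun(N)\rvert$ separately by $\hh$, and to add. The first bound is immediate: Lemma \ref{u7m2m7ft8} gives $\lvert\CovPalCmd(n)\rvert\leq\hh$, and since $\pToRun$ is a map, $\lvert\pToRun(\CovPalCmd(n))\rvert\leq\lvert\CovPalCmd(n)\rvert\leq\hh$.

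For the remainder I would first show $\ctPC(n)\subseteq\Gamma(n)$, so that Lemma \ref{iiubn334hd} yields $\lvert\ctPC(n)\rvert\leq\hh$: if $(p_1,p_2)\in\ctPC(n)$, pick $(n_1,n_2)\in\CovPalEdgeLeft(n)$ with $(p_1,p_2)=\ctPC(n_1,n_2,n)=\toPalCouple(n,\overline n)$, where $\overline n=\Mirror(n_1,n_2,n)=n_1+n_2-n\geq n$ because $(n_1,n_2)\in\CovPalAllLeft(n)$; then $\zz[n,\overline n]$ is a palindromic prefix of $\zz[n,\lvert\zz\rvert]$ equal to $(p_1p_2)^{\alpha}p_1$ with $\alpha\geq 1$ (or to $p_2$ with $p_1=\epsilon$ in the firm case), and inspecting the definitions of $\FirmPalPrefix$ and $\Gamma$ shows $(p_1,p_2)\in\Gamma(n)$ in either case. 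It then remains to show that $(n_1,n_2)\mapsto\pToRun(n_1,n_2)$, restricted to $N$, factors through $(n_1,n_2)\mapsto\ctPC(n_1,n_2,n)\in\ctPC(n)$. For $(n_1,n_2)\in N$ with $(p_1,p_2)=\ctPC(n_1,n_2,n)$ we have, by definition of $\CovPalCmd$, that $\zz[n_1,n_2]\in\Factor((p_1p_2)^{\infty})$, so $\zz[n_1,n_2]$ is a palindrome of period $\xi_0=\lvert p_1p_2\rvert$ carrying the non-periodic palindrome $p_1p_2p_1$ at its centre; I would use Lemma \ref{tudjkdi8545} to read off the palindromic-couple decomposition of $\zz[n_1,n_2]$ along the period $\xi_0$, Lemma \ref{id8ieubmzmfj} together with the Fine--Wilf Theorem \ref{dyjf8e9ekjfdi} to exclude a shorter period in the relevant windows, and Properties \ref{yud8u33d}, \ref{hhd73d22d}, \ref{uhe76fgt} to pin down the unique occurrence, so that $\pToRun(n_1,n_2)$ gets identified with the run attached to the couple $(p_1,p_2)$ and the fixed position $n$. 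Hence $\lvert\pToRun(N)\rvert\leq\lvert\ctPC(n)\rvert\leq\hh$, and adding the two bounds gives $\lvert\pToRun(\CovPalEdgeLeft(n))\rvert\leq 2\hh$.

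The step I expect to be the main obstacle is this last one. The difficulty is that $\pToRun$ reads off $\toPalCouple(n_1,n_2)$, and for $(n_1,n_2)\in N$ this need not equal the centre couple $\ctPC(n_1,n_2,n)$: it can be a longer, firm palindromic couple when $\zz[n_1,n_2]$ is an internal factor of $(p_1p_2)^{\infty}$ yet fails to extend periodically to the right of $n_1$ inside $\zz$. One must argue that the run produced in that situation is still the one determined by the centre couple and by $n$. What makes this plausible is, first, that $\zz[n_1,n_2]\in\Factor((p_1p_2)^{\infty})$ forces the period-$\xi_0$ structure of $\zz[n_1,n_2]$ to reach the occurrence of $p_1p_2p_1$ around $n$, so the run through $n$ of period $\xi_0$ is always available; and, second, that $\zz$ is an ordinary factor, so $\lvert\NPP\rvert$ of every suffix of $\zz$ is at most $\hh$, which bounds how many competing periodic structures can pass through a single position and thereby controls the remaining cases.
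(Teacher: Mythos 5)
Your proposal follows the paper's own argument step for step: the same split of $\CovPalEdgeLeft(n)$ into $\CovPalCmd(n)$ and its complement, the same appeal to Lemma~\ref{u7m2m7ft8} for the compound part, and the same factoring of $\pToRun$ through the center couple $\ctPC(n_1,n_2,n)$ together with $\vert\ctPC(n)\vert\leq\hh$ (via Lemma~\ref{iiubn334hd}) for the remainder. The step you single out as the main obstacle --- showing that for $(n_1,n_2)$ in the remainder the run $\pToRun(n_1,n_2)$ is determined by the center couple $(p_1,p_2)$ and the position $n$, even though $\toPalCouple(n_1,n_2)$ need not coincide with $(p_1,p_2)$ --- is exactly the step the paper dispatches with ``it is easy to verify that $\pToRun(n_1,n_2)=\pToRun(\overline n_1,\overline n_2)$'', so you are not behind the published proof there, and your explicit observation that $\ctPC(n)\subseteq\Gamma(n)$ supplies a detail the paper leaves implicit.
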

\begin{proof}
Lemma \ref{u7m2m7ft8} says that \(\vert\CovPalCmd(n)\vert\leq \hh\) and hence this implies that \begin{equation}\label{hys778evv3v3}\vert \pToRun(\CovPalCmd(n))\vert\leq\hh\mbox{.}\end{equation}
Let \(\CovPalEdgeLeft_0(n)=\CovPalEdgeLeft(n)\setminus \CovPalCmd(n)\).

Suppose \((p_1,p_2)\in \PalCouple\). Let \(K(n,p_1,p_2)=\CovPalEdgeLeft_0(n)\cap \CovPalEdgeLeft(n,p_1,p_2)\) and \(\overline K(n,p_1,p_2)=\pToRun(K(n,p_1,p_2))\mbox{.}\)

Suppose \((n_1,n_2), (\overline n_1,\overline n_2)\in K(n,p_1,p_2)\mbox{.}\) Realize that \((n_1,n_2), (\overline n_1,\overline n_2)\in\CovPalEdgeLeft_0(n)\). It follows that \(\zz[n_1,n_2],\zz[\overline n_1,\overline n_2]\in\Factor((p_1p_2)^{\infty})\) and \(p_1p_2p_1\in\Prefix(\zz[n,\vert \zz\vert])\). Thus it is easy to verify that  \(\pToRun(n_1,n_2)=\pToRun(\overline n_1,\overline n_2)\mbox{.}\) In consequence \(\vert K(n,p_1,p_2)\vert\leq 1\) and \(\vert\overline K(n,p_1,p_2)\vert\leq 1\).
Lemma \ref{iiubn334hd} implies that \(\vert \ctPC(n)\vert\leq \hh\), hence we have that 
\(\vert \pToRun(\CovPalEdgeLeft_0(n))\vert\leq \hh\mbox{.}\)
The proposition follows then from (\ref{hys778evv3v3}); realize that \(\CovPalEdgeLeft(n)=\CovPalCmd(n)\cup\CovPalEdgeLeft_0(n)\mbox{.}\)
This ends the proof.
\end{proof}

The main result of this subsection says that the number of runs constructed from edge covering palindromes covering the position \(n\) is bounded by \({4}\hh\).
\begin{corollary}
    \label{jud9ujorr}
    If \(n\in\mathbb{N}(1,\vert r\vert)\) then     
    \(\vert \pToRun(\CovPalEdge(n))\vert\leq {4}\hh\mbox{.}\)  
\end{corollary}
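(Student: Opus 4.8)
The plan is to reduce the bound for $\CovPalEdge(n)$ to the already-established bound for $\CovPalEdgeLeft(n)$ by a symmetry argument. Recall that $\CovPalEdge(n)=\CovPalEdgeLeft(n)\cup\CovPalEdgeRight(n)$, since every covering palindrome of $n$ has its center either at position $\geq n$ or $\leq n$ (and centers exactly at $n$ fall in both). Proposition \ref{yue7cv39j} already gives $\vert\pToRun(\CovPalEdgeLeft(n))\vert\leq 2\hh$, so it suffices to prove the analogous bound $\vert\pToRun(\CovPalEdgeRight(n))\vert\leq 2\hh$ and then use the union bound to obtain $\vert\pToRun(\CovPalEdge(n))\vert\leq 4\hh$.

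The key step is therefore the right-handed analogue of Proposition \ref{yue7cv39j}. First I would observe that the whole development of $\CovPalEdgeLeft$, center palindromic couples, $\CovPalCmd$, and $\pToRun$ is symmetric under reversal: if $(n_1,n_2)\in\CovPalEdgeRight(n)$, then in the reversed factor $\zz^R$ the mirrored interval corresponds to an element of $\CovPalEdgeLeft$ of the mirrored position. More carefully, for $(n_1,n_2)\in\CovPalEdgeRight(n)$ the center is at $\tfrac{n_1+n_2}{2}\leq n$, so the ``outer'' half of the palindrome sits to the right of $n$; applying $\Mirror(n_1,n_2,\cdot)$ to $n$ gives a position $\overline n\geq$ center, and the structural facts (minimal-period behaviour, the classification of a palindrome as a power of its center couple versus compound) are invariant under taking reverses. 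Hence the proof of Lemma \ref{u7m2m7ft8} applies verbatim to show $\vert\CovPalCmd^{\mathrm{right}}(n)\vert\leq\hh$, and the proof of Proposition \ref{yue7cv39j} then yields $\vert\pToRun(\CovPalEdgeRight(n))\vert\leq 2\hh$.

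The main obstacle I expect is not a mathematical one but a bookkeeping one: spelling out precisely the reversal correspondence so that $\pToRun$ of a right edge covering palindrome is identified with $\pToRun$ of a left edge covering palindrome (of a mirrored position, possibly in $\zz^R$), and checking that $\FirmPalPrefix$, $\toPalCouple$, and the ``$\zz[n_1,n_2]\notin\Factor((p_1p_2)^\infty)$'' condition all behave correctly under this reversal. Since $\zz$ is itself a palindrome ($\zz=\zz^R$), this reversal can in fact be realized internally: the map $j\mapsto\vert\zz\vert+1-j$ is an automorphism of $\zz$ sending $\CovPalEdgeRight(n)$ bijectively onto $\CovPalEdgeLeft(\vert\zz\vert+1-n)$ and commuting with $\pToRun$ up to the same index reversal, so $\vert\pToRun(\CovPalEdgeRight(n))\vert=\vert\pToRun(\CovPalEdgeLeft(\vert\zz\vert+1-n))\vert\leq 2\hh$ by Proposition \ref{yue7cv39j}. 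Once that is in place, the corollary follows immediately:
\begin{equation*}
\vert\pToRun(\CovPalEdge(n))\vert\leq\vert\pToRun(\CovPalEdgeLeft(n))\vert+\vert\pToRun(\CovPalEdgeRight(n))\vert\leq 2\hh+2\hh=4\hh\mbox{.}
\end{equation*}
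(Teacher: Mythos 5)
Your proof is correct and takes essentially the same route as the paper: since $\zz\in\Pal^+$, the involution $j\mapsto\vert\zz\vert+1-j$ sends $\CovPalEdgeRight(n)$ bijectively onto $\CovPalEdgeLeft$ of the mirrored position, so Proposition \ref{yue7cv39j} gives the $2\hh$ bound on the right side as well, and the union bound finishes. (Incidentally, your mirrored index $\vert\zz\vert+1-n$ is the one consistent with the paper's $\Mirror$ definition; the paper's proof writes $\overline n=\vert\zz\vert-n$, which looks like an off-by-one slip.)
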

\begin{proof}
    Let \(\overline n=\vert \zz\vert-n\). Since \(\zz\in\Pal^+\), it is clear that \(\CovPalEdgeRight(n)=\CovPalEdgeLeft(\overline n)\).
    From Proposition \ref{yue7cv39j} we have that \(\vert \pToRun(\CovPalEdgeLeft(\overline n))\vert\leq {2}\hh\mbox{,}\) hence it follows that \(\vert \pToRun(\CovPalEdgeRight(n))\vert\leq {2}\hh\mbox{.}\) Since \(\CovPalEdge(n)=\CovPalEdgeLeft(n)\cup\CovPalEdgeRight(n)\) the corollary follows.    
    This completes the proof.
\end{proof}

We define two special subsets of nested periodic structures. Given \(m\in\mathbb{N}_1\), 
let \[\begin{split}\NestPer_1(m)=&\{(D,\xi)\in\NestPer(m)\mid D\cap\mathbb{N}(\mu_3-c_1\xi+1,\mu_3)=\emptyset \\&\mbox{ and }\mu_3=\perProlong(\min(D),\xi)\}\mbox{, and }\\\NestPer_2(m)=&\{(D,\xi)\in\NestPer(m)\mid D\subseteq\mathbb{N}(\mu_3-c_1\xi+1,\mu_3)\\&\mbox{ and }\mu_3=\perProlong(\min(D),\xi)\}\mbox{.}\end{split}\]

Given \((D,\xi)\in\NestPer(m)\), let \(\separate(D,\xi)=(D_1,D_2)\) be such that 
\(D_2=D\cap \mathbb{N}(\mu_3-c_1\xi+1,\mu_3)\) and \(D_1=D\setminus D_2\), where \(\mu_3=\perProlong(\min(D),\xi)\). We call \((D_1,D_2)\) a \emph{separation} of the nested periodic structure \((D,\xi)\). Note that if \(D_1\not=\emptyset\) then \((D_1,\xi)\in\NestPer_1(m)\) and if \(D_2\not=\emptyset\) then \((D_2,\xi)\in\NestPer_2(m)\); see Property \ref{uj499b}. The purpose of the subsets \((D_1,D_2)\) is to separate the positions the are ``far'' and ``close'' to the periodic prolongation \(\mu_3\). These far and close positions will be manipulated differently.

We show that the palindromic extensions of an NPS cluster \(D_2\) of degree \(m\) can be covered by \(c_1\) NPS clusters of degree \(m+1\).
\begin{lemma}
    \label{hhbd628d}
    If \(m\in\Omega\), \((D_2,\xi)\in\NestPer_2(m)\), and \(\widehat D_2=\sigma(\PalExt(D_2))\) then 
    \(\omega(m+1,\widehat D_2)\leq c_1\).
\end{lemma}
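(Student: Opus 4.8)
The plan is to exploit that $D_2$ meets only $c_1$ consecutive windows of length $\xi$: I would cover $D_2$ by at most $c_1$ many $\xi$-cuts, and show that the palindromic extensions of each such cut fit into a single NPS cluster of degree $m+1$. First set $\mu_1=\min(D_2)$ and $\mu_3=\perProlong(\mu_1,\xi)$. Since $(D_2,\xi)\in\NestPer_2(m)$ forces $D_2\subseteq\mathbb{N}(\mu_3-c_1\xi+1,\mu_3)$, we get $\mu_1\geq\mu_3-c_1\xi+1$, hence $D_2\subseteq\mathbb{N}(\mu_1,\mu_1+c_1\xi-1)$ (in fact only $\diam(D_2)\leq c_1\xi$ is used). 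For $i\in\mathbb{N}(0,c_1-1)$ let $S_i=D_2\cap\mathbb{N}(\mu_1+i\xi,\mu_1+(i+1)\xi-1)$; then $\diam(S_i)\leq\xi$, so each nonempty $S_i$ is a $\xi$-cut of $D_2$, $D_2=\bigcup_iS_i$, and therefore $\widehat D_2=\bigcup_i\sigma(\PalExt(S_i))$.

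Next, fix $i$ with $S_i\neq\emptyset$ and write $X_i=\sigma(\PalExt(S_i))$. Because $(D_2,\xi)\in\NestPer(m)$ and $S_i\in\Cut(D_2,\xi)$, there is $M_i\subseteq\NestPer(m-1)$ with $\vert M_i\vert\leq\theta(m)$ and $S_i\subseteq\widetilde\varphi(M_i)$, whence $X_i\subseteq\bigcup_{(C,\xi_2)\in M_i}\sigma(\PalExt(C))$. Since $m\in\Omega$, for each $(C,\xi_2)\in M_i$ the set $\sigma(\PalExt(C))$ admits an NPS cover by at most $c_3\hh$ clusters of degree $m$. Taking the union of these covers over $M_i$ and intersecting each resulting cluster with $\Close(X_i)$ — which keeps it in $\NestPer(m)$ by Property \ref{uj499b} — yields an NPS cover of $X_i$ by at most $\vert M_i\vert\,c_3\hh\leq\theta(m)c_3\hh$ clusters of degree $m$. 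As $\theta(m+1)=2c_1c_3\hh\,\theta(m)\geq\theta(m)c_3\hh$, we get $\omega(m,X_i)\leq\theta(m+1)$.

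It remains to absorb each $X_i$ into one structure of degree $m+1$. I would take $N_i\in\NPSCover(m,X_i)$ with $\vert N_i\vert=\omega(m,X_i)\leq\theta(m+1)$ and put $G_i=\widetilde\varphi(N_i)$, so $X_i\subseteq G_i\subseteq\Close(X_i)$. One checks that $(G_i,\vert\zz\vert)\in\widetilde\NestPer$ (the period $\vert\zz\vert$ is trivial and $\Spread(G_i,\vert\zz\vert)\cap\Close(G_i)=G_i$), and that every $\vert\zz\vert$-cut $\overline G$ of $G_i$ is covered by at most $\vert N_i\vert\leq\theta(m+1)$ clusters of degree $m$ obtained by intersecting the clusters of $N_i$ with $\Close(\overline G)$ (again Property \ref{uj499b}); hence $(G_i,\vert\zz\vert)\in\NestPer(m+1)$. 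Since $G_i\subseteq\Close(X_i)\subseteq\Close(\widehat D_2)$ and $\widehat D_2=\bigcup_iX_i\subseteq\bigcup_iG_i$, the family $\{(G_i,\vert\zz\vert):S_i\neq\emptyset\}$ is an NPS cover of $\widehat D_2$ of size at most $c_1$, so $\omega(m+1,\widehat D_2)\leq c_1$.

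The one genuinely new ingredient is feeding a single $\xi$-cut into two facts at once — into the membership $(D_2,\xi)\in\NestPer(m)$ to descend to degree $m-1$, then into $m\in\Omega$ to climb back to degree $m$ via palindromic extensions — together with the elementary remark that, as $D_2$ meets only $c_1$ consecutive length-$\xi$ windows, $c_1$ cuts suffice. I expect the main friction to be purely bureaucratic: maintaining the $\NPSCover$ side-condition $\widetilde\varphi(M)\subseteq\Close(D)$ each time covers are combined, and checking that a finite family of degree-$m$ clusters of total size at most $\theta(m+1)$ can be repackaged as one degree-$(m+1)$ cluster with the trivial period $\vert\zz\vert$ — both handled uniformly by trimming clusters via Property \ref{uj499b}.
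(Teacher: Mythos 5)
Your proof is correct and reaches the same $c_1$-window decomposition as the paper, but it takes a genuinely different route to handle each piece. The paper promotes each piece $D_{2,i}=D_2\cap\mathbb{N}(\min(D_2)+(i-1)\xi,\min(D_2)+i\xi-1)$ to an NPS with the trivial period $\vert\zz\vert$ via Properties \ref{uj499b} and \ref{yb33hfg0d}, observes $\perProlong(\min(D_{2,i}),\vert\zz\vert)=\vert\zz\vert$, and then invokes Theorem \ref{f8u556ju} as a black box to get $\omega(m+1,\widehat D_{2,i})\leq 1$; your proof instead unrolls the definitions — feeding each $\xi$-cut $S_i$ into the $\NestPer(m)$ membership to descend to $\theta(m)$ clusters of degree $m-1$, then into $m\in\Omega$ to climb back to $\leq\theta(m)c_3\hh\leq\theta(m+1)$ clusters of degree $m$, and finally repackaging them as a single cluster of degree $m+1$ with the trivial period. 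In effect you re-derive exactly the special case of Theorem \ref{f8u556ju} in which the period is $\vert\zz\vert$ (so that the $\Spread$, folding via Lemma \ref{uid93jjd}, and bottom estimate via Lemma \ref{d8733j5r9} all collapse), whereas the paper lets the general theorem do double duty. Your version is a bit longer but self-contained; the paper's is shorter but opaque about why the bound improves from $c_3\hh$ to $1$ per piece. Two small bookkeeping remarks: when you restrict clusters to $\Close(X_i)$ or $\Close(\overline G)$ you should discard those whose intersection becomes empty before applying Property \ref{uj499b}, and the final cover should be indexed by $\{i: X_i\neq\emptyset\}$ rather than $\{i: S_i\neq\emptyset\}$, since $S_i\neq\emptyset$ does not rule out $G_i=\emptyset$ — the same quantification the paper makes via ``if $\widehat D_{2,i}\neq\emptyset$ then.''
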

\begin{proof}
From the definition of \(\NestPer_2(m)\), we have that \(\diam(D_2)\leq c_1\xi\). Let \(D_{2,i}=D_2\cap\mathbb{N}(\min(D_2)+(i-1)\xi,\min(D_2)+i\xi-1)\), where \(i\in\mathbb{N}(1,c_1)\). We have that \(D_2=\bigcup_{i=1}^{c_1}D_{2,i}\). If \(D_{2,i}\not=\emptyset\) then Property \ref{uj499b} implies that \((D_{2,i},\xi)\in\NestPer(m)\) and Property \ref{yb33hfg0d} implies \((D_{2,i},\vert \zz\vert)\in\NestPer(m)\). 
Let \(\widehat D_{2,i}=\sigma(\PalExt(D_{2,i}))\). 
We have that \(\perProlong(\min(D_{2,i}),\vert \zz\vert)=\vert \zz\vert\) and thus Theorem \ref{f8u556ju} implies that if \(\widehat D_{2,i}\not=\emptyset\) then \((\widehat D_{2,i},\vert \zz\vert)\in\NestPer(m+1)\) for all \(i\in\mathbb{N}(1,c_1)\).
The lemma follows. This ends the proof.
\end{proof}

Given \(m\in\mathbb{N}_1\), \((D_1,\xi_1)\in\NestPer_1(m)\), and \((\nu_1,\nu_2,\xi_2)\in\Run\), let \[\begin{split}\PalExt(D_1,\xi_1,\nu_1,\nu_2,\xi_2)=\{(n,p_1,p_2,\alpha)\in\PalExt(D)\mid \mu_3=\perProlong(D_1,\xi_1)\\\mbox{ and }\mu_3<\sigma(n,p_1,p_2,\alpha) \mbox{ and }(n,\sigma(n,p_1,p_2,\alpha))\in\RunPal(\nu_1,\nu_2,\xi_2)\}\mbox{.}\end{split}\]

Let \(\Psi\) be the set of all \({6}\)-tuples \((m,D_1,\xi_1,\nu_1,\nu_2,\xi_2)\) such that 
 \(m\in\mathbb{N}_1\), \((D_1,\xi_1)\in\NestPer_1(m)\), \((\nu_1,\nu_2,\xi_2)\in\Run\), and \(\PalExt(D_1,\xi_1,\nu_1,\nu_2,\xi_2)\not=\emptyset\). 

Given \((m,D_1,\xi_1,\nu_1,\nu_2,\xi_2)\in\Psi\), let \[\minRunPal(D_1,\xi_1,\nu_1,\nu_2,\xi_2)=(d_1,d_2)\in\CovPalAll(\mu_3+1)\] be such that \(\mu_3=\perProlong(D_1,\xi_1)\), 
 \[\begin{split}d_1=&\min\{n \mid (n,p_1,p_2,\alpha)\in\PalExt(D_1,\xi_1,\nu_1,\nu_2,\xi_2)\}\mbox{, and }\\ d_2=&\min\{\sigma(d_1,p_1,p_2,\alpha)\mid (d_1,p_1,p_2,\alpha)\in\PalExt(D_1,\xi_1,\nu_1,\nu_2,\xi_2))\}\mbox{.}\end{split}\]
Clearly \((d_1,d_2)\in\RunPal(\nu_1,\nu_2,\xi_2)\). We call \((d_1,d_2)\) the \emph{minimal canonical palindrome} of \((D_1,\xi_1,\mu_1,\mu_2,\xi_2)\).

\subsection{Outside NPS palindromic extension}
For this subsection suppose 
\begin{itemize}
    \item 
\((m,D_1,\xi_1,\nu_1,\nu_2,\xi_2)\in\Psi\), 
\item \(\mu_3=\perProlong(\min(D_1),\xi_1)\), 
\item\((d_1,d_2)=\minRunPal(D_1,\xi_1,\nu_1,\nu_2,\xi_2)\),  
\item \(G_0=\Mirror(d_1,d_2,D_1)\), and \item \(G=\Spread(G_0,\xi_2)\cap\mathbb{N}(\mu_3+1,\nu_2)\).
\end{itemize}     

\begin{lemma}
    \label{pp0eh2bd3}
   We have that  \(d_2-(\mu_3+1)<\xi_2\).
    \end{lemma}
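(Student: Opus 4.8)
The plan is to unpack the definitions and show that if $d_2 - (\mu_3+1) \geq \xi_2$, then the position $d_1$ together with the minimal palindromic extension $d_2$ would already ``fit inside'' the run period structure in a way that contradicts minimality, specifically the choice of $d_1$ as the \emph{smallest} left endpoint among palindromic extension tuples in $\PalExt(D_1,\xi_1,\nu_1,\nu_2,\xi_2)$.

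First I would recall the setup: by hypothesis $(d_1,d_2) = \minRunPal(D_1,\xi_1,\nu_1,\nu_2,\xi_2)$, so there is a tuple $(d_1,p_1,p_2,\alpha) \in \PalExt(D_1,\xi_1,\nu_1,\nu_2,\xi_2)$ with $d_2 = \sigma(d_1,p_1,p_2,\alpha) = d_1 - 1 + \vert(p_1p_2)^\alpha p_1\vert$, and by the definition of $\PalExt(D_1,\xi_1,\nu_1,\nu_2,\xi_2)$ we have $\mu_3 < d_2$ and $(d_1,d_2) \in \RunPal(\nu_1,\nu_2,\xi_2)$. Also $d_1 \in D_1$, and since $(D_1,\xi_1)\in\NestPer_1(m)$ we know $D_1 \subseteq \mathbb{N}(\min(D_1),\mu_3)$ with $D_1\cap\mathbb{N}(\mu_3-c_1\xi_1+1,\mu_3)=\emptyset$; in particular $d_1 \leq \mu_3 - c_1\xi_1 + 1 \leq \mu_3$, hence $d_1 \leq \mu_3 < d_2$, so the position $\mu_3 + 1$ lies strictly between $d_1$ and $d_2$ (or equals $d_2$ only if $d_2 = \mu_3+1$, the degenerate case). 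The key combinatorial fact is that $\xi_2 = \vert p_1 p_2\vert$ is a period of $\zz[\nu_1,\nu_2]$ and the canonical palindrome $\zz[d_1,d_2]$ has the form $(p_1p_2)^\alpha p_1$, so $\zz[d_1,d_1+\xi_2-1],\zz[d_1+\xi_2, d_1+2\xi_2-1],\dots$ are all copies of $p_1p_2$.

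The main step: suppose toward a contradiction that $d_2 - (\mu_3+1) \geq \xi_2$, i.e.\ $d_2 \geq \mu_3 + 1 + \xi_2$. Then $d_1 - 1 + \vert(p_1p_2)^\alpha p_1\vert \geq \mu_3 + 1 + \xi_2$, which gives $\alpha \geq 2$ (since $\vert(p_1p_2)^\alpha p_1\vert = \alpha\xi_2 + \vert p_1\vert$, and with $d_1 \leq \mu_3$ we need $\alpha\xi_2 + \vert p_1\vert \geq \xi_2 + 2 + (\mu_3 - d_1) \geq \xi_2 + 2$, forcing $\alpha \geq 2$ unless $\vert p_1\vert$ is large — but $(p_1,p_2)\in\PalCouple$ forces $\order(p_1p_2p_1)<2$, so $\vert p_1\vert < \vert p_1p_2\vert = \xi_2$, hence indeed $\alpha \geq 2$). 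Now by Lemma~\ref{uud8kt87e}, the tuple $(d_1,p_1,p_2,\alpha-1)$ is also in $\PalExt(d_1)$, and $\sigma(d_1,p_1,p_2,\alpha-1) = d_2 - \xi_2 \geq \mu_3 + 1 > \mu_3$, and since $(d_1,d_2)\in\RunPal(\nu_1,\nu_2,\xi_2)$ and $\xi_2$ is the run period, $(d_1, d_2-\xi_2)\in\RunPal(\nu_1,\nu_2,\xi_2)$ as well by Lemma~\ref{hhyd89j33b}. Thus $(d_1,p_1,p_2,\alpha-1)\in\PalExt(D_1,\xi_1,\nu_1,\nu_2,\xi_2)$, and so $d_2 - \xi_2$ is a valid value of $\sigma(d_1,p_1,p_2,\cdot)$ strictly smaller than $d_2$ — contradicting the minimality in the definition of $d_2 = \min\{\sigma(d_1,p_1,p_2,\alpha)\mid (d_1,p_1,p_2,\alpha)\in\PalExt(D_1,\xi_1,\nu_1,\nu_2,\xi_2)\}$. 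Therefore $d_2 - (\mu_3+1) < \xi_2$.

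The main obstacle I anticipate is handling the boundary bookkeeping carefully: making sure that $(d_1, d_2-\xi_2)$ still lies in $\mathbb{N}^2_1(\nu_1,\nu_2)$ (so that it is genuinely in $\RunPal(\nu_1,\nu_2,\xi_2)$ and not pushed out the left side of the run), and confirming $\mu_3 < d_2 - \xi_2$ or at least $\mu_3 + 1 \leq d_2-\xi_2$ so that the membership condition $\mu_3 < \sigma(n,p_1,p_2,\alpha)$ in $\PalExt(D_1,\xi_1,\nu_1,\nu_2,\xi_2)$ is preserved — which is exactly what the assumption $d_2 \geq \mu_3 + 1 + \xi_2$ buys us. One should also double-check that $d_1$ itself is unchanged (we are only decrementing $\alpha$, not moving the left endpoint), so the ``$d_1 = \min$'' part of the definition is untouched and it is purely the ``$d_2 = \min$'' part that is violated. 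A small amount of care is also needed in the degenerate case $\alpha \geq 2$ with $p_1 = \epsilon$ (then the center position $\gamma_1$ is a half-integer), but the spread/mirror arithmetic via Lemma~\ref{ff7e6erd} and Lemma~\ref{hhyd89j33b} already accommodates half-integers, so no separate argument is required.
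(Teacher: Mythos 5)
Your overall strategy matches the paper's: assume $d_2-(\mu_3+1)\geq\xi_2$ and contradict the minimality of $d_2$ by exhibiting a shorter canonical palindrome starting at $d_1$. The closure/center bookkeeping you do (using Lemma~\ref{hhyd89j33b} to check $(d_1,d_2-\xi_2)\in\RunPal(\nu_1,\nu_2,\xi_2)$, and the inequality $d_2-\xi_2\geq\mu_3+1>\mu_3$ for the membership condition) is sound and is the same verification the paper leaves implicit.

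However, there is a real gap in how you \emph{produce} the shorter palindrome. You decrement the exponent $\alpha$ of the \emph{tuple's own} couple $(p_1,p_2)$ via Lemma~\ref{uud8kt87e}, and this needs two things you have not established. First, you silently use $\vert p_1p_2\vert=\xi_2$ (e.g.\ in ``$\sigma(d_1,p_1,p_2,\alpha-1)=d_2-\xi_2$''), but the tuple's couple comes from $\PalExt(d_1)$, not from $\PalCouple(\nu_1,\xi_2)$; nothing in the definitions forces its period to equal the run period $\xi_2$ (for instance a tuple in $\PalExt_2(d_1)$ has $(p_1,p_2)=(\epsilon,\zz[d_1,d_2])$, with $\vert p_1p_2\vert=d_2-d_1+1>\xi_2$). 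Second, your derivation of $\alpha\geq 2$ is incorrect: from $\alpha\xi_2+\vert p_1\vert\geq\xi_2+2$ and $\vert p_1\vert<\xi_2$ you only get $\alpha\xi_2>2$, which is consistent with $\alpha=1$ whenever $\xi_2\geq 3$; and Lemma~\ref{uud8kt87e} requires $\overline\alpha\geq 1$, so $\alpha=1$ blocks the decrement. The paper's proof sidesteps both issues: it notes $\xi_2\in\Period(\zz[d_1,d_2])$ (since $(d_1,d_2)$ lies inside the run), applies Lemma~\ref{tudjkdi8545} to get a \emph{fresh} decomposition $\zz[d_1,d_2]=(p_1p_2)^{\alpha}p_1$ with $\vert p_1p_2\vert=\xi_2$ and $p_2\neq\epsilon$, observes that $d_2-d_1+1>\xi_2$ forces $\alpha\geq 1$ in this decomposition so that $\zz[d_1,d_2-\xi_2]=(p_1p_2)^{\alpha-1}p_1$ is a palindrome, and then (via the remark after Lemma~\ref{uud8kt87e}) a valid extension tuple with $\sigma=d_2-\xi_2$ exists — yielding the contradiction without ever touching the original tuple's couple or exponent. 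To repair your argument you should replace the $\alpha$-decrement step by this Lemma~\ref{tudjkdi8545} refactorization; the rest of your verification can stand.
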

    \begin{proof}        
    From the definition of a run, it follows that \(\xi_2\in\Period(\zz[d_1,d_2])\). Then from Lemma \ref{tudjkdi8545} we have that 
         \(\zz[d_1,d_2]=(p_1p_2)^{\alpha}p_1\), where \(p_1\in\Pal^*\), \(p_2\in\Pal^+\), \(\vert p_1p_2\vert=\xi_2\), and \(\alpha\in\mathbb{N}_0\). If \(d_2-(\mu_3+1)\geq \xi_2\) then \(\zz[d_1,d_2-\xi_2]=(p_1p_2)^{\alpha-1}p_1\in\Pal^+\). This contradicts to the definition of the minimal canonical palindrome \((d_1,d_2)\).
        The lemma follows. This ends the proof.
    \end{proof}

\begin{lemma}
    \label{duu8fkghg}
    We have that    
     \(\frac{d_1+d_2}{2}\geq \mu_3-\xi_1\) and \(\max(D_1)-d_1+1\leq \xi_2\). 
\end{lemma}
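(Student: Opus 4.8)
The plan is to deduce the second inequality quickly from the first (together with Lemma \ref{pp0eh2bd3}), and to prove the first by contradiction, exploiting that $\zz[d_1,d_2]$ is a palindrome and that $\zz[d_1,\mu_3]$ lies inside the $\xi_1$-periodic prolongation $\zz[\min(D_1),\mu_3]$. First I would collect the ingredients. Since $d_1\in D_1$ we have $\min(D_1)\le d_1\le\max(D_1)$; since $(D_1,\xi_1)\in\NestPer_1(m)$ forces $D_1\cap\mathbb{N}(\mu_3-c_1\xi_1+1,\mu_3)=\emptyset$ while $\max(D_1)\le\mu_3$, we get $d_1\le\max(D_1)\le\mu_3-c_1\xi_1$, i.e. $\mu_3-d_1\ge c_1\xi_1=5\xi_1$, and $\zz[d_1,\mu_3]$ (a suffix of $\zz[\min(D_1),\mu_3]$) has period $\xi_1$. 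From the definition of $\PalExt(D_1,\xi_1,\nu_1,\nu_2,\xi_2)$ we have $d_2>\mu_3$, hence $\mu_3<\vert\zz\vert$ and so $\xi_1\notin\Period(\zz[\min(D_1),\mu_3+1])$; and the realizing tuple $(d_1,p_1,p_2,\alpha)\in\PalExt(d_1)$ gives $\zz[d_1,d_2]=(p_1p_2)^{\alpha}p_1$ with $(p_1,p_2)\in\PalCouple$ and $\alpha\in\mathbb{N}_1$. Finally $d_2\le\mu_3+\xi_2$ by Lemma \ref{pp0eh2bd3}.

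Assume for contradiction that $c:=\frac{d_1+d_2}{2}<\mu_3-\xi_1$. Since $\zz[d_1,d_2]$ is a palindrome with center $c$, the reflection of the prefix $\zz[d_1,\mu_3]$ is the factor $\zz[2c-\mu_3,d_2]$, which therefore also has period $\xi_1$; because $c<\mu_3$ and $d_2>\mu_3$, these two factors overlap exactly in $\zz[2c-\mu_3,\mu_3]$, of length $2(\mu_3-c)+1>\xi_1$, so Lemma \ref{z7d8djje} yields that $\zz[d_1,d_2]$ has period $\xi_1$. If $\alpha=1$, then $\zz[d_1,d_2]=p_1p_2p_1$ with $\order(p_1p_2p_1)<2$, which is absurd since $\order(\zz[d_1,d_2])\ge\frac{d_2-d_1+1}{\xi_1}>\frac{\mu_3-d_1}{\xi_1}\ge c_1=5$.

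Hence $\alpha\ge 2$; put $\xi':=\vert p_1p_2\vert$. Property \ref{yud8u33d} gives $\mper(\zz[d_1,d_2])=\xi'$, so $\xi'\le\xi_1$, and $\zz[d_1,\mu_3]$ (a prefix of $(p_1p_2)^{\alpha}p_1$) has periods $\xi'$ and $\xi_1$; since $\vert\zz[d_1,\mu_3]\vert\ge 5\xi_1+1\ge\xi_1+\xi'-\gcd(\xi_1,\xi')$, Theorem \ref{dyjf8e9ekjfdi} shows that $\zz[d_1,\mu_3]$ has period $g:=\gcd(\xi_1,\xi')$. I would then push the period one step past $\mu_3$: period $\xi'$ of $\zz[d_1,d_2]$ gives $\zz[\mu_3+1]=\zz[\mu_3+1-\xi']$, and since $g\mid\xi'$ and $\zz[d_1,\mu_3]$ has period $g$, the positions $\mu_3+1-\xi'$ and $\mu_3+1-g$ lie in $\mathbb{N}(d_1,\mu_3)$ and are congruent modulo $g$, whence $\zz[\mu_3+1]=\zz[\mu_3+1-g]$; thus $\zz[d_1,\mu_3+1]$ has period $g$ and hence period $\xi_1$ (a multiple of $g$). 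Gluing $\zz[\min(D_1),\mu_3]$ and $\zz[d_1,\mu_3+1]$ along their common factor $\zz[d_1,\mu_3]$ via Lemma \ref{z7d8djje} forces $\xi_1\in\Period(\zz[\min(D_1),\mu_3+1])$, contradicting $\mu_3=\perProlong(\min(D_1),\xi_1)$. This establishes $\frac{d_1+d_2}{2}\ge\mu_3-\xi_1$.

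For the second inequality, combine this with $d_2\le\mu_3+\xi_2$: $d_1\ge 2(\mu_3-\xi_1)-d_2\ge\mu_3-2\xi_1-\xi_2$, so $\max(D_1)-d_1+1\le(\mu_3-c_1\xi_1)-(\mu_3-2\xi_1-\xi_2)+1=\xi_2-(c_1-2)\xi_1+1\le\xi_2$. The main obstacle is precisely the case $\alpha\ge 2$: there $\zz[d_1,d_2]$ may be a genuine high power, so merely showing it has period $\xi_1$ is not contradictory; one must descend to $g=\gcd(\xi_1,\xi')$ by Fine--Wilf and then transfer the letter at position $\mu_3+1$ across this shorter period, producing an illegal one-step extension of the $\xi_1$-periodic prefix $\zz[\min(D_1),\mu_3]$. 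A routine but necessary check throughout is that all positions invoked (notably $\mu_3+1-\xi'$, $\mu_3+1-g$, $\mu_3+1-\xi_1$) stay in $\mathbb{N}(d_1,\mu_3)$, which holds because $\mu_3-d_1\ge c_1\xi_1$ and $\xi'\le\xi_1$.
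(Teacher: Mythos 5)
Your proof is correct, but the long middle paragraph is a detour the paper avoids entirely, and that you could too. The core step --- establishing $\xi_1\in\Period(\zz[d_1,d_2])$ via the overlap of the $\xi_1$-periodic prefix $\zz[d_1,\mu_3]$ with its palindromic reflection inside $\zz[d_1,d_2]$ --- is exactly the paper's mechanism (packaged there as Lemma~\ref{uu78rbg68}). At that point you are already done: the definition of $\PalExt(D_1,\xi_1,\nu_1,\nu_2,\xi_2)$ gives $d_2=\sigma(d_1,p_1,p_2,\alpha)>\mu_3$, so $\zz[d_1,\mu_3+1]$ is a \emph{prefix} of $\zz[d_1,d_2]$ and automatically inherits the period $\xi_1$; gluing this with $\zz[\min(D_1),\mu_3]$ along their common piece $\zz[d_1,\mu_3]$ (Lemma~\ref{z7d8djje}, using $\mu_3-d_1\geq c_1\xi_1\geq\xi_1$) gives $\xi_1\in\Period(\zz[\min(D_1),\mu_3+1])$, contradicting $\mu_3=\perProlong(\min(D_1),\xi_1)$. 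This is all the paper does. Your case split on $\alpha$, the appeal to Property~\ref{yud8u33d} to pin down $\mper(\zz[d_1,d_2])=\xi'$, the Fine--Wilf descent to $g=\gcd(\xi_1,\xi')$, and the letter transfer at $\mu_3+1$ are all correct but entirely superfluous: whether $\zz[d_1,d_2]$ is a high power is irrelevant, because a period of a word is a period of every prefix, and the prefix $\zz[d_1,\mu_3+1]$ already forces the contradiction. Your derivation of the second inequality is a valid arithmetic rearrangement of the paper's argument (which instead assumes $\xi_2<\max(D_1)-d_1+1$ and contradicts Lemma~\ref{pp0eh2bd3}); both routes use the same three ingredients: $\max(D_1)\leq\mu_3-c_1\xi_1$, the first inequality, and Lemma~\ref{pp0eh2bd3}.
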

\begin{proof}
Suppose that \(\frac{d_1+d_2}{2}<\mu_3-\xi_1\). 
It follows then from Lemma \ref{uu78rbg68}  that \(\xi_1\in\Period(\zz[d_1,d_2])\), which is a contradiction to the definition of \(\mu_3\). Realize that \(\xi_1\not\in\Period(\zz[\min(D_1),\mu_3+1])\) and consequently \(\xi_1\not\in\Period(\zz[d_1,\mu_3+1])\), since \(\mu_3-d_1\geq c_1\xi_1\). We conclude that \begin{equation}
    \label{h7d6bvwgsd}
\frac{d_1+d_2}{2}\geq\mu_3-\xi_1\mbox{.}\end{equation}

Suppose that \(\xi_2<\max(D_1)-d_1+1\). It follows then from the definition of \(\NestPer_1(m)\) and (\ref{h7d6bvwgsd}) that \[\frac{d_1+d_2}{2}\geq \mu_3-\xi_1\geq \max(D_1)+c_1\xi_1-\xi_1\geq d_1+\xi_2-1+(c_1-1)\xi_1\mbox{.}\] This implies that \(\frac{1}{2}(d_2-d_1+1)\geq \xi_2-1+(c_1-1)\xi_1\). From (\ref{h7d6bvwgsd}) it follows that \(d_2-\mu_3+\xi_1\geq \frac{1}{2}(d_2-d_1+1)\) and consequently \(d_2-\mu_3+\xi_1\geq \xi_2-1+(c_1-1)\xi_1\). This is a contradiction to Lemma \ref{pp0eh2bd3}. We conclude that \(\xi_2\geq \max(D_1)-d_1+1\).
The lemma follows. This ends the proof.
\end{proof}

Let \(\widehat D_1=\PalExt(D_1,\xi_1,\nu_1,\nu_2,\xi_2)\).
We show that \(G\) contains all palindromic extension of \(D_1\) by the canonical palindromes of \((\nu_1,\nu_2,\xi_2)\).
\begin{proposition}
    \label{uu8ebn7g9} 
    We have that \(\sigma(\widehat D_1)\subseteq G\).    
\end{proposition}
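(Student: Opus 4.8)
The plan is to show that every palindromic extension of a position $n \in D_1$ by a canonical palindrome of the run $(\nu_1,\nu_2,\xi_2)$ lands in $G = \Spread(G_0,\xi_2)\cap\mathbb{N}(\mu_3+1,\nu_2)$, where $G_0 = \Mirror(d_1,d_2,D_1)$. So fix $(n,p_1,p_2,\alpha)\in\widehat D_1 = \PalExt(D_1,\xi_1,\nu_1,\nu_2,\xi_2)$ and write $\widehat n = \sigma(n,p_1,p_2,\alpha)$; by definition $\mu_3 < \widehat n$ and $(n,\widehat n)\in\RunPal(\nu_1,\nu_2,\xi_2)$, so $\widehat n \le \nu_2$. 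Thus it suffices to prove $\widehat n \in \Spread(G_0,\xi_2)$, i.e. that $\widehat n$ is congruent mod $\xi_2$ to some element of $\Mirror(d_1,d_2,D_1)$. The natural candidate is $\Mirror(d_1,d_2,n)$: since $(d_1,d_2)\in\RunPal$ with $\zz[d_1,d_2]\in\Pal^+$ having period $\xi_2$, and $n \ge d_1$ (here I use the inequality $\max(D_1)-d_1+1\le\xi_2$ from Lemma \ref{duu8fkghg} together with $n\in D_1$, so $n$ lies inside $\mathbb{N}(d_1,d_2)$, in fact within one period of $d_1$), the mirror image $\overline n := \Mirror(d_1,d_2,n)$ is well-defined and lies in $G_0$.

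The first key step is to verify that $\overline n\in\mathbb{N}(d_1,d_2)$, hence $G_0\subseteq\mathbb{N}(d_1,d_2)$: this follows because every $n\in D_1$ satisfies $d_1\le n$ by the choice of $d_1$ as a minimum (from the definition of $\minRunPal$), and $n\le\max(D_1)\le d_1+\xi_2-1\le d_2$ using Lemma \ref{duu8fkghg} (which also gives $d_2-d_1+1\ge\xi_2$ via $\xi_2\in\Period(\zz[d_1,d_2])$ and the run being long enough). The second, and central, step is to show that $\widehat n \equiv \overline n \pmod{\xi_2}$. This is where the structure of $\RunPal$ enters: $(n,\widehat n)\in\RunPal(\nu_1,\nu_2,\xi_2)$ means $\tfrac{n+\widehat n}{2}\in\Spread(\pi(\nu_1,p_1',p_2'),\xi_2)$ for $(p_1',p_2')=\PalCouple(\nu_1,\xi_2)$, and similarly $\tfrac{d_1+d_2}{2}$ lies in that same spread. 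Since the center of $\zz[n,\widehat n]$ and the center of $\zz[d_1,d_2]$ both lie on the arithmetic progression of period $\tfrac{\xi_2}{2}$ determined by $\pi(\nu_1,p_1',p_2')$ (Lemma \ref{ff7e6erd}), their difference $\tfrac{\widehat n - d_2}{2} - \tfrac{d_1 - n}{2}$ — wait, more precisely $\tfrac{n+\widehat n}{2}-\tfrac{d_1+d_2}{2}$ — is a multiple of $\tfrac{\xi_2}{2}$; combined with the definition $\overline n = \Mirror(d_1,d_2,n)$ which gives $n + \overline n = d_1 + d_2$, one computes $\widehat n - \overline n = (n+\widehat n) - (n+\overline n) = (n+\widehat n) - (d_1+d_2)$, an even multiple of $\tfrac{\xi_2}{2}$ once one checks parity, hence a multiple of $\xi_2$. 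The parity bookkeeping (integer versus half-integer centers, and the case $p_1'=\epsilon$) is the fiddly part and should be handled by a short case split mirroring the remark after Lemma \ref{ff7e6erd}.

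The third step is simply to collect: $\widehat n \equiv \overline n\pmod{\xi_2}$ with $\overline n\in G_0$ gives $\widehat n\in\Spread(G_0,\xi_2)$, and $\mu_3 < \widehat n \le \nu_2$ gives $\widehat n\in\mathbb{N}(\mu_3+1,\nu_2)$, so $\widehat n\in G$ as required. The main obstacle I anticipate is not the congruence computation per se but making precise that $\widehat n$ is actually realized by a \emph{canonical} palindrome of the run anchored at the \emph{same} residue class as $(d_1,d_2)$ — i.e. that the palindromic extension $(n,p_1,p_2,\alpha)$, once transported into the run, shares the center-progression of the minimal canonical palindrome rather than landing on the "other" center $\gamma_2$ from $\pi$. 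This is exactly controlled by the fact that the difference of two elements of $\RunPal$ with a common left endpoint is a multiple of $\xi_2$ (Lemma \ref{hhyd89j33b}) and by Lemma \ref{pp0eh2bd3}, which pins $d_2$ into the first period past $\mu_3$; I would invoke Lemma \ref{pp0eh2bd3} and Lemma \ref{duu8fkghg} to rule out the off-by-$\tfrac{\xi_2}{2}$ possibility. Once that alignment is secured, the rest is the arithmetic of spreads.
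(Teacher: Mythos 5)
Your proof is correct and follows essentially the same route as the paper: mirror \(n\) through the minimal canonical palindrome \((d_1,d_2)\) to obtain \(\overline n\in G_0\) and show \(\widehat n-\overline n\equiv 0\pmod{\xi_2}\). The paper derives the congruence by first verifying \((n,\overline n)\in\RunPal(\nu_1,\nu_2,\xi_2)\) (via the chain \(n\leq\max(D_1)\leq\mu_3-c_1\xi_1<\mu_3-\xi_1\leq\tfrac{d_1+d_2}{2}\leq\overline n\), which in particular gives \(n\leq\overline n\)) and then invoking Lemma \ref{hhyd89j33b}; your direct center calculation is an equivalent shortcut, and the parity worry you flag is unfounded, since \(\tfrac{n+\widehat n}{2}-\tfrac{d_1+d_2}{2}\in\tfrac{\xi_2}{2}\mathbb{Z}\) automatically gives \((n+\widehat n)-(d_1+d_2)\in\xi_2\mathbb{Z}\) --- no case split on residue class and no appeal to Lemma \ref{pp0eh2bd3} is needed.
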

\begin{proof}
Suppose \((n_1,n_2)\in\RunPal(\nu_1,\nu_2,\xi_2)\) such that \(n_1\in D_1\) and \(n_2>\mu_3\). 
From the definition of \(\minRunPal\) it is clear that \(n_1\in\mathbb{N}(d_1,d_2)\).
Let \(n_3=\Mirror(d_1,d_2,n_1)\).
From Lemma \ref{duu8fkghg} we have that \[n_1\leq \max(D_1)\leq \mu_3-c_1\xi_1<\mu_3-\xi_1\leq \frac{d_1+d_2}{2}\leq 
n_3\mbox{.}\] It follows from  Lemma \ref{hhyd89j33b} that \(n_2-n_3\equiv 0\pmod{\xi_2}\mbox{.}\) Because clearly \(n_3\in G_0\) we conclude that \(n_2\in G\). 
The proposition follows.
This ends the proof.
\end{proof}

The main theorem of this section shows that the palindromic extensions of \(D_1\in\varphi(\NestPer_1(m))\) by the canonical palindromes of the run \((\nu_1,\nu_2,\xi_2)\) are covered by an NPS cluster of degree \(m+1\).
\begin{theorem}
    \label{b88d93ji}
   We have that \(\omega(m+1,\widehat D_1)\leq 1\). 
\end{theorem}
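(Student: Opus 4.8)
The plan is to exhibit a single nested periodic structure of degree $m+1$ with period $\xi_2$ that is sandwiched between $\sigma(\widehat D_1)$ and $\Close(\sigma(\widehat D_1))$; this immediately gives $\omega(m+1,\sigma(\widehat D_1))\le 1$, which is the assertion. Since Proposition \ref{uu8ebn7g9} already yields $\sigma(\widehat D_1)\subseteq G=\Spread(G_0,\xi_2)\cap\mathbb{N}(\mu_3+1,\nu_2)$, the natural candidate is $D:=\Spread(G_0,\xi_2)\cap\Close(\sigma(\widehat D_1))$. First I would record the cheap facts: $\sigma(\widehat D_1)\subseteq D\subseteq\Close(\sigma(\widehat D_1))$, hence $\Close(D)=\Close(\sigma(\widehat D_1))$; the identity $\Spread(D,\xi_2)\cap\Close(D)=D$ follows directly from the definition of $D$; $D\subseteq\mathbb{N}(\nu_1,\nu_2)$ because each element of $\sigma(\widehat D_1)$ is a second coordinate of a pair in $\RunPal(\nu_1,\nu_2,\xi_2)$ and all such pairs consist of positions in $\mathbb{N}(\nu_1,\nu_2)$; and then $\xi_2\in\Period(\zz[\min D,\max D])$ by restricting the period $\xi_2$ of $\zz[\nu_1,\nu_2]$. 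Hence $(D,\xi_2)\in\widetilde\NestPer$.

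Second I would check that $(G_0,\xi_1)\in\NestPer(m)$ while $\diam(G_0)\le\xi_2$ and $G_0\subseteq\mathbb{N}(\nu_1,\nu_2)$. Here $\zz[d_1,d_2]\in\Pal^+$ because $(d_1,d_2)\in\RunPal(\nu_1,\nu_2,\xi_2)$; moreover $d_1\in D_1$, since $d_1$ is a first coordinate of a tuple in $\PalExt(D_1,\xi_1,\nu_1,\nu_2,\xi_2)\subseteq\bigcup_{n\in D_1}\PalExt(n)$, and $\max D_1\le\mu_3<\mu_3+1\le d_2$ (the left inequality from $(D_1,\xi_1)\in\NestPer_1(m)$ together with the definition of $\perProlong$, the right one from $(d_1,d_2)\in\CovPalAll(\mu_3+1)$). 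So $D_1\cap\mathbb{N}(d_1,d_2)=D_1\cap\mathbb{N}(d_1,\max D_1)$ is a nonempty restriction of $D_1$, hence an NPS cluster of degree $m$ by Property \ref{uj499b}; reflecting it inside the palindrome $\zz[d_1,d_2]$ via Property \ref{dy73bh2249} produces precisely $G_0=\Mirror(d_1,d_2,D_1)$ and shows $(G_0,\xi_1)\in\NestPer(m)$. Since $\Mirror$ is an isometry, $\diam(G_0)=\max D_1-d_1+1\le\xi_2$ by Lemma \ref{duu8fkghg}, and $G_0\subseteq\mathbb{N}(d_1,d_2)\subseteq\mathbb{N}(\nu_1,\nu_2)$.

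The heart of the argument is to verify that every $\xi_2$-cut $\overline D$ of $D$ satisfies $\omega(m,\overline D)\le\theta(m+1)$; combined with the previous two paragraphs this gives $(D,\xi_2)\in\NestPer(m+1)$, whence $\{(D,\xi_2)\}\in\NPSCover(m+1,\sigma(\widehat D_1))$ and the proof is complete. Fix a nonempty such $\overline D$ (the empty cut being trivial); then $\diam(\overline D)\le\xi_2$ and $\overline D\subseteq\Spread(G_0,\xi_2)$. Because the translates $\add(G_0,k\xi_2)$, $k\in\mathbb{Z}$, are pairwise disjoint blocks of diameter $\le\xi_2$ spaced $\xi_2$ apart, a window of length $\xi_2$ meets at most two of them, so $\overline D\subseteq\add(G_0,k_1\xi_2)\cup\add(G_0,k_2\xi_2)$ for at most two integers $k_1,k_2$. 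For each $i$ with $\overline D_i:=\overline D\cap\add(G_0,k_i\xi_2)\not=\emptyset$, put $E_i:=\add(G_0,k_i\xi_2)\cap\Close(\overline D_i)$; then $\overline D_i\subseteq E_i\subseteq\Close(\overline D)$, and $E_i$ is the translate by $k_i\xi_2$ of a restriction $F_i$ of $G_0$. Both $F_i\subseteq G_0$ and $E_i$ lie in $\mathbb{N}(\nu_1,\nu_2)$ (for $E_i$ use $\Close(\overline D)\subseteq\Close(D)=\Close(\sigma(\widehat D_1))\subseteq\mathbb{N}(\nu_1,\nu_2)$), so, transporting along the period $\xi_2$ of $\zz[\nu_1,\nu_2]$ (the chain of $\xi_2$-steps from a point of $F_i$ to its translate stays inside $\mathbb{N}(\nu_1,\nu_2)$), we obtain $\zz[\min F_i,\max F_i]=\zz[\min E_i,\max E_i]$; hence Properties \ref{uj499b} and \ref{euu9cb2f1} give $(E_i,\xi_1)\in\NestPer(m)$. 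Thus $\{(E_i,\xi_1)\}$ over the at most two nonempty indices is an NPS cover of $\overline D$ of size $\le 2\le\theta(m+1)$, as required.

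I expect the last paragraph to be the only delicate part: one must argue carefully that a window of length $\xi_2$ really meets at most two translates of $G_0$ (this is where $\diam(G_0)\le\xi_2$ is used) and that Property \ref{euu9cb2f1} still applies when a translate $\add(G_0,k_i\xi_2)$ partially protrudes from $\mathbb{N}(\nu_1,\nu_2)$ — the remedy being to cut down to $\Close(\overline D_i)$ first and invoke the period $\xi_2$ only on the portion that lies safely inside the interval on which $\xi_2$ is known to be a period. Everything else is a routine application of the already-established closure properties of $\widetilde\NestPer$ and $\Spread$ and of the mirror, restriction, and translation operations on NPS clusters.
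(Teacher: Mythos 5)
Your proposal is correct and follows essentially the same route as the paper's proof: establish $(G_0,\xi_1)\in\NestPer(m)$ by restricting $D_1$ to $\mathbb{N}(d_1,\max D_1)$ (Property~\ref{uj499b}) and mirroring through the palindrome $\zz[d_1,d_2]$ (Property~\ref{dy73bh2249}); use Lemma~\ref{duu8fkghg} to get $\diam(G_0)\le\xi_2$; then conclude that $\Spread(G_0,\xi_2)$ (suitably truncated) is an NPS cluster of degree $m+1$ covering $\sigma(\widehat D_1)$, invoking Proposition~\ref{uu8ebn7g9}. The only stylistic difference is that where the paper cites Lemma~\ref{d8733j5r9} to bound $\omega(m,\cdot,\xi_2)\le 2\le\theta(m+1)$, you re-derive its content inline via the ``at most two translates meet a window of length $\xi_2$'' argument, and you are slightly more careful to truncate to $\Close(\sigma(\widehat D_1))$ so that the NPS-cover condition $\widetilde\varphi(M)\subseteq\Close(\cdot)$ is manifestly satisfied.
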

\begin{proof}
Let \(\overline D_1=D_1\cap \mathbb{N}(d_1,\max(D))\).
 Realize  that \(G_0=\Mirror(d_1,d_2,D_1)=\Mirror(d_1,d_2,\overline D_1)\). 
 Property \ref{uj499b} implies that \((\overline D_1,\xi_1)\in\NestPer(m)\) and Property \ref{dy73bh2249} implies that \((G_0,\xi_1)\in \NestPer(m)\). 
 Obviously \(\diam(\overline D_1)=\diam(G_0)\). Hence Lemma \ref{duu8fkghg} implies that \(\diam(G_0)\leq \xi_2\). It follows from Lemma \ref{d8733j5r9} that \(\omega(m,G,\xi_2)\leq 2\leq \theta(m)\). 
In consequence we have that \((G,\xi_2)\in\NestPer(m+1)\mbox{.}\)
Proposition \ref{uu8ebn7g9} implies that \(\widehat D_1\subseteq G\). 
We conclude that \(\omega(m+1,\widehat D_1)\leq 1\).
This ends the proof. 
\end{proof}
\begin{remark}
\label{dhj8f7ekv2}
    Applying Property \ref{dy73bh2249} in the previous proof is the main reason for usage of padded words. Without padded words, we would need to derive an upper bound on \(\omega(m,\add(D,1))\), since concatenating of another palindrome to a given prefix of length \(n\) requires a shift to the position \(n+1\) and then looking for palindromic prefixes of \(\zz[n+1,\vert\zz\vert\).
\end{remark}

\subsection{Palindromic extensions of an NPS cluster}

The next lemma shows that every palindromic factor of \(\zz\) is a canonical palindrome of some run of \(\zz\). We omit the proof, as it is obvious.
\begin{lemma}
\label{dd793bfg}
    If \(n_1\leq n_2\in\mathbb{N}(1,\vert\zz\vert)\), \(\zz[n_1,n_2]\in\Pal^+\), and \[(\nu_1,\nu_2,\xi)=\pToRun(n_1,n_2)\]
    then \((n_1,n_2)\in\RunPal(\nu_1,\nu_2,\xi,n)\).
\end{lemma}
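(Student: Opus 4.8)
The plan is to verify directly the two conditions defining membership of $(n_1,n_2)$ in $\RunPal(\nu_1,\nu_2,\xi)$. Write $(p_1,p_2)=\toPalCouple(n_1,n_2)$, so that $\xi=\vert p_1p_2\vert$; the definition of $\pToRun$ already gives $\nu_1\le n_1\le n_2\le\nu_2$, which is the first condition. Put $(p_1',p_2')=\PalCouple(\nu_1,\xi)$, so that $\pi(\nu_1,p_1',p_2')=\{\gamma_1,\gamma_2\}$ with $\gamma_1=\nu_1+\tfrac{\vert p_1'\vert-1}{2}$ and $\gamma_2=\nu_1+\vert p_1'\vert+\tfrac{\vert p_2'\vert-1}{2}$, whence $2\gamma_1\equiv 2\gamma_2\pmod\xi$ (indeed $\gamma_2\equiv\gamma_1+\tfrac{\xi}{2}\pmod\xi$ by Lemma \ref{ff7e6erd}). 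Unwinding the definition of $\Spread$, the remaining task is therefore to show that the centre $c:=\tfrac{n_1+n_2}{2}$ of the palindrome $\zz[n_1,n_2]$ satisfies the single congruence $n_1+n_2\equiv 2\gamma_1\pmod\xi$.

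Next I would set up the periodic picture. From $(\nu_1,\nu_2,\xi)\in\Run$ and $\nu_2-\nu_1+1\ge\xi$ we get $\zz[\nu_1,\nu_1+\xi-1]=p_1'p_2'$ and $\zz[\nu_1,\nu_2]\in\Prefix((p_1'p_2')^\infty)$, so $\xi\in\Period(\zz[\nu_1,\nu_2])$; applying Property \ref{yud8u33d} to the prefix $(p_1'p_2')^2$ of $(p_1'p_2')^\infty$ gives $\xi=\mper((p_1'p_2')^2)$, and hence $\xi$ is the minimal period of $(p_1'p_2')^\infty$. In either branch of the definition of $\toPalCouple$ one has $\zz[n_1,n_2]=(p_1p_2)^\alpha p_1$ for some $\alpha\in\mathbb{N}_1$ (the firm branch being $\alpha=1$, $p_1=\epsilon$), so $\vert\zz[n_1,n_2]\vert\ge\xi$ and, using $n_1\le n_1+\xi-1\le n_2\le\nu_2$, the first period $\zz[n_1,n_1+\xi-1]=p_1p_2$ is a cyclic rotation of $p_1'p_2'$. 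Consequently $\zz[n_1,n_2]$ is a factor of $(p_1'p_2')^\infty$, and $c$ is the centre of this palindromic factor.

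The core step is to identify the centres of symmetry of palindromic factors of $(p_1'p_2')^\infty$. On one hand, since $p_1$ and $p_2$ are palindromes, the period block $p_1p_2$ is invariant under the cyclic reflection whose centre reduces to $c$ modulo $\xi$ (this reflection maps the occurrence of $p_1$ onto itself and the occurrence of $p_2$ onto itself), so this reflection is a symmetry of $(p_1p_2)^\infty=(p_1'p_2')^\infty$. On the other hand, if $c'$ is any centre of symmetry of $(p_1'p_2')^\infty$ at distance $d>0$ from $\gamma_1$, then composing the reflections about $c'$ and about $\gamma_1$ is a translation by $2d$, so $2d\in\Period((p_1'p_2')^\infty)$; as also $\xi\in\Period((p_1'p_2')^\infty)$ and $\xi$ is the minimal period, Theorem \ref{dyjf8e9ekjfdi} forces $\xi\mid 2d$, i.e.\ $2c'\equiv 2\gamma_1\pmod\xi$. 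Taking $c'=c$ yields $n_1+n_2=2c\equiv 2\gamma_1\pmod\xi$, which is exactly what was required, so $(n_1,n_2)\in\RunPal(\nu_1,\nu_2,\xi)$.

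The step I expect to be the main obstacle is the bookkeeping with the possibly half-integer centres of symmetry: turning ``$\zz[n_1,n_2]$ is a palindrome with period $\xi$'' and ``$c'$ is a centre of symmetry'' into the two cyclic statements above, and correctly extracting residue information from $\xi\mid 2d$. Lemma \ref{ff7e6erd} is precisely the tool that collapses the two a priori cases $c\equiv\gamma_1$ and $c\equiv\gamma_2$ into the one congruence $2c\equiv 2\gamma_1\pmod\xi$ encoded in $\pi$, so that no case distinction on the parity of $\xi$ is needed at the end.
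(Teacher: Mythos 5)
Your argument is correct, and since the paper declines to give a proof (Lemma~\ref{dd793bfg} is stated with ``We omit the proof, as it is obvious''), there is no written proof to compare against; your reflection-group reduction is a legitimate and clean way to make the ``obvious'' rigorous. The logical chain is sound: the first membership condition $\nu_1\le n_1\le n_2\le\nu_2$ is immediate from $\pToRun$; the identity $\Spread(\pi(\nu_1,p_1',p_2'),\xi)=\gamma_1+\tfrac{\xi}{2}\mathbb{Z}$ follows from Lemma~\ref{ff7e6erd}; the length bound $\vert \zz[n_1,n_2]\vert\ge\xi$ holds in both branches of $\toPalCouple$; and the composition-of-reflections/Fine--Wilf step correctly extracts $\xi\mid 2(c-\gamma_1)$ from minimality of $\xi=\mper((p_1'p_2')^\infty)$, which you obtain from Property~\ref{yud8u33d}.

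Two small places deserve a remark, neither of which is a real gap. First, you assert without proof that $\gamma_1$ is a centre of symmetry of $(p_1'p_2')^\infty$; this needs the same kind of verification you sketch for $c$, namely that reflecting the bi-infinite periodic word about the centre of the palindrome $p_1'$ (or of $p_2'$) preserves it because $(p_1',p_2')\in\PalCouple$ is aligned with the period --- true, but worth a sentence, since you are composing it with the reflection about $c$. Second, the phrase ``the period block $p_1p_2$ is invariant under the cyclic reflection whose centre reduces to $c$ modulo $\xi$'' tacitly uses that $c$ falls at the centre of the middle $p_1$ (when $\alpha$ is even) or of the middle $p_2$ (when $\alpha$ is odd) inside $(p_1p_2)^{\alpha}p_1$; a one-line computation $c=n_1+\tfrac{\alpha\xi+\vert p_1\vert-1}{2}$ settles this. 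Incidentally, the same conclusion ($c$ is a symmetry centre of $(p_1'p_2')^\infty$) can be reached without locating $c$ relative to $p_1$, $p_2$ at all: any palindromic factor of length at least $\xi$ and period $\xi$ of a $\xi$-periodic word forces its centre to be a reflection symmetry of the whole periodic word, by reducing indices modulo $\xi$ into the factor. That shortcut would spare you the ``cyclic reflection'' bookkeeping you flag as the main obstacle.
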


Given \(D\subseteq \mathbb{N}(1,\vert \zz\vert)\), \(n_1\leq n_2\in\mathbb{N}(1,\vert\zz\vert)\), and \(\zz[n_1,n_2]\in\Pal^+\), let \[\begin{split}
    \PalExt(D,n_1,n_2)=\{(n,p_1,p_2,\alpha)\in\PalExt(D)\mid n_1
    \leq n\leq n_2\mbox{ and }\\\Mirror(n_1,n_2,n)=\sigma(n,p_1,p_2,\alpha)\}\mbox{.}
\end{split}\]

Note that \(\sigma(\PalExt(D,n_1,n_2))=\Mirror(n_1,n_2,D)\). Then the following lemma is obvious. We omit the proof.
\begin{lemma}
\label{bbc7d933f}
If \(n\in\mathbb{N}(1,\vert\zz\vert)\), \(D\subseteq\mathbb{N}(1,\vert \zz\vert)\), and \((n_1,n_2)\in\CovPalAll(n)\) then there is 
     \((n_3,n_4)\in\CovPalEdge(n)\) such that \(\PalExt(D,n_1,n_2)\subseteq \PalExt(D,n_3,n_4)\).
\end{lemma}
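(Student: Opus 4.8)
The plan is to "grow" the palindrome $\zz[n_1,n_2]$ symmetrically around its center until it becomes an edge covering palindrome, and to observe that this enlargement only adds palindromic extension tuples, never removes them. Concretely, starting from $(n_1,n_2)\in\CovPalAll(n)$, repeatedly replace $(n_1,n_2)$ by $(n_1-1,n_2+1)$ as long as $n_1>1$, $n_2<\vert\zz\vert$, and $\zz[n_1-1,n_2+1]\in\Pal^+$. This process terminates (the interval can only grow until it hits the boundary of $\zz$ or a non-palindromic extension), and it produces a pair $(n_3,n_4)$ with $n_3\le n_1\le n\le n_2\le n_4$ such that $\zz[n_3,n_4]\in\Pal^+$ and either $n_3=1$, or $n_4=\vert\zz\vert$, or $\zz[n_3-1,n_4+1]\notin\Pal^+$; that is, $(n_3,n_4)\in\CovPalEdge(n)$. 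Since $n\in\mathbb N(n_1,n_2)\subseteq\mathbb N(n_3,n_4)$, the containment $\CovPalEdge(n)\ni(n_3,n_4)$ is legitimate.

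Next I would verify the inclusion $\PalExt(D,n_1,n_2)\subseteq\PalExt(D,n_3,n_4)$. Suppose $(n,p_1,p_2,\alpha)\in\PalExt(D,n_1,n_2)$. By definition this means $n\in D$, $n_1\le n\le n_2$, $(n,p_1,p_2,\alpha)\in\PalExt(n)$, and $\Mirror(n_1,n_2,n)=\sigma(n,p_1,p_2,\alpha)$. Because $\zz[n_1,n_2]$ and $\zz[n_3,n_4]$ are both palindromes sharing the same center (the enlargement step preserves the center), we have $\Mirror(n_3,n_4,n)-n=n-\Mirror(n_1,n_2,n)+\,(\text{same shift on both sides})$; more precisely, since both intervals are centered at $(n_1+n_2)/2=(n_3+n_4)/2$, the mirror of $n$ is the same point: $\Mirror(n_3,n_4,n)=\Mirror(n_1,n_2,n)$. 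Hence $\Mirror(n_3,n_4,n)=\sigma(n,p_1,p_2,\alpha)$ as well, and since $n_3\le n_1\le n\le n_2\le n_4$, all the membership conditions for $\PalExt(D,n_3,n_4)$ hold. Therefore $(n,p_1,p_2,\alpha)\in\PalExt(D,n_3,n_4)$, which gives the desired inclusion.

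The only slightly delicate point is the claim that the enlargement preserves the center, i.e.\ $(n_1+n_2)/2$ is invariant under $(n_1,n_2)\mapsto(n_1-1,n_2+1)$; this is immediate since $((n_1-1)+(n_2+1))/2=(n_1+n_2)/2$, and it is exactly this invariance that makes $\Mirror(n_1,n_2,\cdot)=\Mirror(n_3,n_4,\cdot)$ on the common domain. I expect no real obstacle here: the statement is essentially bookkeeping about the definitions of $\CovPalEdge$, $\Mirror$, and $\PalExt(D,\cdot,\cdot)$, which is why the paper is content to omit the proof. The one thing to be careful about is the edge case where $\zz[n_1,n_2]$ is already an edge covering palindrome (then $(n_3,n_4)=(n_1,n_2)$ and there is nothing to do) and the case $n_3=1$ or $n_4=\vert\zz\vert$, where the "$\zz[n_3-1,n_4+1]\notin\Pal^+$" clause in the definition of $\CovPalEdge(n)$ is vacuous by the guarding "if $n_1>1$ and $n_2<\vert\zz\vert$''.
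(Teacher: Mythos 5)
Your proof is correct and is exactly the natural argument the paper is taking for granted when it declares the lemma obvious and omits the proof: grow $(n_1,n_2)$ to the maximal (edge) covering palindrome $(n_3,n_4)$ around the same center, note $n_1+n_2=n_3+n_4$ so $\Mirror(n_1,n_2,\cdot)=\Mirror(n_3,n_4,\cdot)$ on $\mathbb{N}(n_1,n_2)$, and observe that widening the interval only relaxes the constraint $n_1\le n'\le n_2$ to $n_3\le n'\le n_4$. No gaps.
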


We show that if \(m\in\Omega\) then the palindromic extensions of an NPS cluster of degree \(m\) can be covered by \((1+c_1+{4}\hh)\) NPS clusters of degree \(m+1\).
\begin{proposition}
\label{uibv78vg}
If \(m\in\Omega\), \((D,\xi)\in\NestPer(m)\), and \(\widehat D=\sigma(\PalExt(D))\) then 
\(\omega(m+1,\widehat D)\leq 1+c_1+{4}\hh\).
\end{proposition}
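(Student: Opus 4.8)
The plan is to split $\PalExt(D)$ using the separation $(D_1,D_2)=\separate(D,\xi)$ and, within $\PalExt(D_1)$, according to whether a palindromic extension lands at or below the periodic prolongation $\mu_3=\perProlong(\mu_1,\xi)$ (where $\mu_1=\min(D)$) or strictly beyond it; each of the three resulting pieces will then be covered by invoking an earlier result. First I would record the routine facts that set up the decomposition: since $(D,\xi)\in\widetilde\NestPer$ we have $\xi\in\Period(\zz[\mu_1,\max(D)])$, so $\max(D)\le\mu_3$ and $D\subseteq\mathbb{N}(\mu_1,\mu_3)$; and since $p_2\ne\epsilon$ and $\alpha\ge 1$ for every tuple $(n,p_1,p_2,\alpha)\in\PalExt(n)$, each palindromic extension satisfies $\sigma(n,p_1,p_2,\alpha)\ge n\ge\mu_1$. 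Using $\PalExt(D)=\PalExt(D_1)\cup\PalExt(D_2)$, it follows that $\widehat D=\sigma(\PalExt(D))$ is contained in the union of
\[
A=\widehat D\cap\mathbb{N}(\mu_1,\mu_3),\qquad \sigma(\PalExt(D_2)),\qquad B=\sigma(\PalExt(D_1))\cap\mathbb{N}(\mu_3+1,\vert\zz\vert),
\]
and the goal is to bound $\omega(m+1,\cdot)$ on these three sets by $1$, $c_1$, and $4\hh$ respectively; the final bound then follows from the (easy) subadditivity of $\omega(m+1,\cdot)$ over finite unions, since a union of NPS covers of the parts still lies inside the closure of the whole (closures being monotone).

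For $A$, I would apply Theorem~\ref{f8u556ju} directly to $(D,\xi)$ (permissible since $m\in\Omega$), obtaining a single $G$ with $A\subseteq G$ and $(G,\xi)\in\NestPer(m+1)$, hence $\omega(m+1,A)\le 1$. For $\sigma(\PalExt(D_2))$, if $D_2=\emptyset$ there is nothing to prove; otherwise $(D_2,\xi)\in\NestPer_2(m)$ (as noted just after the definition of $\separate$, via Property~\ref{uj499b}), so Lemma~\ref{hhbd628d} gives $\omega(m+1,\sigma(\PalExt(D_2)))\le c_1$. This leaves $B$, which carries the real content.

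To handle $B$, assume $D_1\ne\emptyset$ (otherwise $B=\emptyset$); then $\min(D_1)=\mu_1$, $\perProlong(\mu_1,\xi)=\mu_3$ still, and $(D_1,\xi)\in\NestPer_1(m)$. Given any $(n,p_1,p_2,\alpha)\in\PalExt(D_1)$ with $\widehat n:=\sigma(n,p_1,p_2,\alpha)>\mu_3$, the inequalities $n\le\mu_3<\widehat n$ put $\mu_3+1$ inside $\mathbb{N}(n,\widehat n)$, so $(n,\widehat n)\in\CovPalAll(\mu_3+1)$ and $(n,p_1,p_2,\alpha)\in\PalExt(D_1,n,\widehat n)$. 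By Lemma~\ref{bbc7d933f} there is an edge covering palindrome $(n_3,n_4)\in\CovPalEdge(\mu_3+1)$ with $\PalExt(D_1,n,\widehat n)\subseteq\PalExt(D_1,n_3,n_4)$; let $(\nu_1,\nu_2,\xi_2)=\pToRun(n_3,n_4)$, so $(n_3,n_4)\in\RunPal(\nu_1,\nu_2,\xi_2)$ by Lemma~\ref{dd793bfg}. Because $\zz[n_3,n_4]$ is a palindrome, $\zz[n,\widehat n]=\zz[n,\Mirror(n_3,n_4,n)]$ is a palindrome with the same center $(n_3+n_4)/2$, and $\nu_1\le n_3\le n\le\widehat n\le n_4\le\nu_2$; since membership in $\RunPal(\nu_1,\nu_2,\xi_2)$ depends only on the endpoints lying between $\nu_1$ and $\nu_2$ and on the center, we get $(n,\widehat n)\in\RunPal(\nu_1,\nu_2,\xi_2)$, and with $\widehat n>\mu_3$ this means $(n,p_1,p_2,\alpha)\in\PalExt(D_1,\xi,\nu_1,\nu_2,\xi_2)$. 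Consequently $B$ is covered by $\bigcup\sigma(\PalExt(D_1,\xi,\nu_1,\nu_2,\xi_2))$ over $(\nu_1,\nu_2,\xi_2)\in\pToRun(\CovPalEdge(\mu_3+1))$. Corollary~\ref{jud9ujorr} bounds the number of runs in this index set by $4\hh$, and for each run Theorem~\ref{b88d93ji} gives an NPS cover of degree $m+1$ of size at most $1$ (trivially so when the set is empty). Hence $\omega(m+1,B)\le 4\hh$, and combining the three bounds yields $\omega(m+1,\widehat D)\le 1+c_1+4\hh$.

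I expect the main obstacle to be the middle of the last paragraph: getting from an arbitrary palindromic extension of $D_1$ that overshoots $\mu_3$ to a canonical palindrome of one of the boundedly many runs through position $\mu_3+1$ — that is, chaining Lemma~\ref{bbc7d933f} (covering palindrome $\to$ edge covering palindrome), $\pToRun$ together with Lemma~\ref{dd793bfg} (edge covering palindrome $\to$ its run), and the center argument that makes $(n,\widehat n)$ a canonical palindrome of that run so that Theorem~\ref{b88d93ji} applies. The remaining ingredients — the partition of $\widehat D$, the empty-$D_i$ cases, and the subadditivity of $\omega(m+1,\cdot)$ — are routine.
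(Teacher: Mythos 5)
Your proof is correct and follows essentially the same route as the paper's: separate $(D,\xi)$ into $(D_1,D_2)$, handle the part of $\widehat D$ within $\mathbb{N}(\mu_1,\mu_3)$ via Theorem~\ref{f8u556ju}, handle $\sigma(\PalExt(D_2))$ via Lemma~\ref{hhbd628d}, and handle the overshooting part of $\sigma(\PalExt(D_1))$ by routing through $\CovPalEdge(\mu_3+1)$, its runs (Lemmas~\ref{bbc7d933f} and~\ref{dd793bfg}, Corollary~\ref{jud9ujorr}), and Theorem~\ref{b88d93ji}. The only small deviation is that you apply Theorem~\ref{f8u556ju} to $D$ rather than to $D_1$ as the paper does, which yields the same $\omega\le 1$ bound on $\widehat D\cap\mathbb{N}(\mu_1,\mu_3)$ and is if anything a slight tidying; your explicit center-matching argument and your remark on subadditivity of $\omega$ merely spell out steps the paper leaves implicit.
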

\begin{proof}
Let \(\mu_1=\min(D)\) and \(\mu_3=\perProlong(D,\xi)\).
Let \((D_1,D_2)=\separate(D,\xi)\), let \(\widehat D_1=\sigma(\PalExt(D_1))\), and let \(\widehat D_2=\sigma(\PalExt(D_2))\).
Let \(\widehat D_{1,1}=\widehat D_1\cap \mathbb{N}(\mu_1,\mu_3)\) and \(\widehat D_{1,2}=\widehat D_1\cap \mathbb{N}(\mu_3+1,\vert \zz\vert)\). 
Theorem \ref{f8u556ju} implies that \(\omega(m+1,\widehat D_{1,1})\leq 1\). Lemma \ref{hhbd628d} implies that \(\omega(m+1,\widehat D_{2})\leq c_1\).

Suppose that \(D_{1,2}\not=\emptyset\). Given \((n_1,n_2)\in\CovPalAll(\mu_3+1)\) and \((\nu_1,\nu_2,\xi_2)\in\Run\), 
let \[K(n_1,n_2)=\sigma(\PalExt(D_{1,2},n_1,n_2))\cap \mathbb{N}(\mu_3+1,\vert\zz\vert)\] and let 
\[K(\nu_1,\nu_2,\xi_2)=\sigma(\PalExt(D_{1,2},\xi_1,\nu_1,\nu_2,\xi_2)\mbox{.}\] 
Realize that \(K(\nu_1,\nu_2,\xi_2)\subseteq  \mathbb{N}(\mu_3+1,\vert\zz\vert)\).
From Lemma \ref{dd793bfg} and Lemma \ref{bbc7d933f}  it follows that 
    \[\begin{split}\widehat D_{1,2}= \bigcup_{(n_1,n_2)\in\CovPalAll(\mu_3+1)}K(n_1,n_2)= 
            \bigcup_{(n_1,n_2)\in\CovPalEdge(\mu_3+1)}K(n_1,n_2)=\\
        \bigcup_{\substack{(\nu_1,\nu_2,\xi_2)\in\pToRun(\CovPalEdge(\mu_3+1))}}K(\nu_1,\nu_2,\xi_2)
    \mbox{.} 
    \end{split}\]            
    From Corollary \ref{jud9ujorr} we have that 
    \(\vert \pToRun(\CovPalEdge(\mu_3+1))\vert\leq {4}\hh\mbox{.}\) 
    Theorem \ref{b88d93ji} implies that if \((\nu_1,\nu_2,\xi_2)\in \pToRun(\CovPalEdge(\mu_3+1))\) then \(\omega(m+1,K(\nu_1,\nu_2,\xi_2))\leq 1\). It follows that \(\omega(m+1,\widehat D_{1,2})\leq 4\hh\).
    Since obviously \(\widehat D=\widehat D_{1,2}\cup\widehat D_{1,2}\cup\widehat D_2\mbox{}\), the proposition follows.
    This ends the proof.
\end{proof}

We step to the main result of Section \ref{sec8903jhd}.
\begin{corollary}
\label{yysbn44df}
\(\Omega=\mathbb{N}_1\)
\end{corollary}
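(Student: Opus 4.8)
The plan is to prove $\Omega=\mathbb{N}_1$ by induction on $m$, using only the two results that were assembled for exactly this purpose: Lemma~\ref{iodi9efjg} handles the base case and Proposition~\ref{uibv78vg} drives the inductive step. The sole non‑combinatorial point is to verify that the constants $c_1$ and $c_3$ were fixed so that the covering bound propagates.

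\emph{Base case $m=1$.} By definition, $1\in\Omega$ means that $\omega(1,\widehat D)\leq c_3\hh$ for every $(D,\xi)\in\NestPer(0)$, where $\widehat D=\sigma(\PalExt(D))$. Since every $(D,\xi)\in\NestPer(0)$ has $\vert D\vert=1$, I would write $D=\{n\}$ with $n\in\mathbb{N}(1,\vert\zz\vert)$; then $\PalExt(D)=\PalExt(n)$ and $\widehat D=\sigma(\PalExt(n))$, so Lemma~\ref{iodi9efjg} gives $\omega(1,\widehat D)\leq\hh\leq c_3\hh$. Hence $1\in\Omega$.

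\emph{Inductive step.} Assume $m\in\Omega$; I claim $m+1\in\Omega$. Fix an arbitrary $(D,\xi)\in\NestPer(m)$ and set $\widehat D=\sigma(\PalExt(D))$. Because $m\in\Omega$, Proposition~\ref{uibv78vg} applies directly and yields $\omega(m+1,\widehat D)\leq 1+c_1+4\hh$. Now I use $c_1=5$, $c_3=10$, and $\hh=\vert\NPP(\zz)\vert\geq\hh_0\geq 1$: then $1+c_1+4\hh=6+4\hh\leq 6\hh+4\hh=10\hh=c_3\hh$. As $(D,\xi)$ was arbitrary, $\omega(m+1,\widehat D)\leq c_3\hh$ for every $(D,\xi)\in\NestPer(m)$, which is precisely the statement $m+1\in\Omega$. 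By induction $\Omega=\mathbb{N}_1$.

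I do not expect any real obstacle here: the substantive work is already contained in Proposition~\ref{uibv78vg} (which itself rests on Theorems~\ref{f8u556ju} and~\ref{b88d93ji}, Lemma~\ref{hhbd628d}, and Corollary~\ref{jud9ujorr}) and in Lemma~\ref{iodi9efjg}. The one thing to be careful about is that the recursion closes rather than degrades: going from level $m$ to level $m+1$ replaces the target bound $c_3\hh$ by the bound $1+c_1+4\hh$ furnished by Proposition~\ref{uibv78vg}, and since $1+c_1+4=10=c_3$ the inequality $1+c_1+4\hh\leq c_3\hh$ holds for every $\hh\geq 1$ — this is exactly why $c_3$ was chosen equal to $10$, calibrated against the extreme case $\hh=1$. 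The remaining content of the argument is the trivial bookkeeping that membership in $\NestPer(0)$ forces the cluster to be a singleton.
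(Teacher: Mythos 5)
Your proof is correct and follows the same approach as the paper: induction on $m$, with the base case furnished by Lemma~\ref{iodi9efjg} and the inductive step by Proposition~\ref{uibv78vg} together with the observation that $1+c_1+4\hh\leq c_3\hh$ for $\hh\geq 1$ when $c_1=5$, $c_3=10$. The paper phrases the arithmetic slightly differently ($c_3\hh\geq(1+c_1+4)\hh\geq 1+c_1+4\hh$), but the content is identical.
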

\begin{proof}
    The proof is by induction on \(m\in\mathbb{N}_1\). The base case for \(m=1\) follows from Lemma \ref{iodi9efjg}. Suppose that \(m-1\in\Omega\) and \(m\geq 2\). 
    Proposition \ref{uibv78vg} implies that \(m\in\Omega\), since \(c_3\hh\geq (1+c_1+{4})\hh\geq 1+c_1+{4}\hh\). The corollary follows. 
    This completes the proof.
\end{proof}

\section{Base positions}
\label{cbhd88e7jf}
Recall that \(\{\zz_1, \zz_2, \dots,\zz_{\hh}\}=\NPP(\zz)\), \(\zz=\zz_{\hh}\), and \(\vert \zz_1\vert<\vert \zz_2\vert<\dots<\vert \zz_{\hh}\vert\). Let \(\widehat \zz_1,\widehat \zz_2,\dots,\widehat \zz_{\hh-1}\in\Pal^+\) be such that \(\zz_{i+1}=\zz_i\widehat \zz_i\zz_i\), where \(i\in\{1,2,\dots,\hh-1\}\). 
\begin{remark}
Property \ref{uyue4d9bh} asserts that \(\widehat \zz_i\) exist and are uniquely determined.
\end{remark}

We define formally the base positions of \(\zz\) as mentioned in the introduction.
\begin{definition}
Let \(\BB_1=\{(1,1)\}\). Given \(j\in\mathbb{N}(2,\hh)\), let 
\[\begin{split}
    \widehat\BB_j&= \{(g,\overline e)\mid (g,e)\in\BB_{j-1}\mbox{ and }\overline e=e+\vert \zz_{j-1}\widehat \zz_{j-1}\vert\}\mbox{ and }\\\BB_j&=\widehat\BB_j\cup\BB_{j-1}\cup\{(j,1)\}\mbox{.}\end{split}\]
Let \(\BB=\BB_{\hh}\) and let \(\widetilde\BB=\{e\mid (g,e)\in\BB\}\). 

If \((g,e)\in \BB\) then we say that \(e\) is a \emph{base position} of \(\zz_g\).
\end{definition}
\begin{remark}
    Note that if \((g,e)\in\BB\) then \(\zz[e, e+\vert \zz_g\vert-1]=\zz_g\) and that \(\BB_j=\{(g,e)\in\BB\mid e+\vert \zz_g\vert-1\leq \vert \zz_j\vert\}\).
Also note that if \(e\in\widetilde\BB\) then \(\zz[e]=\bb\).
\end{remark}
\begin{example}
    Suppose \(a,c\in\Sigma_0\). Let \(\zz_1=\bb\), \(\zz_2=\bb a\bb\), and \(\zz_3=\bb a\bb a\bb c \bb a \bb a \bb\).
    Then \(\widehat\zz_1=a\), \(\widehat\zz_2=a\bb c\bb a\), \(\BB_1=\{(1,1)\}\), \(\widehat\BB_2=\{1,3\}\), \(\BB_2=\{(1,1), (1,3), (2,1)\}\), \(\vert \zz_2\widehat \zz_2\vert=8\), 
    \(\widehat \BB_3=\{(1,9), (1,11), (2,9)\}\), and \[\BB_3=\{(1,1), (1,3), (2,1), (1,9), (1,11), (2,9), (3,1)\}\mbox{.}\]
\end{example}
\begin{definition}
Suppose \(n_1<n_2\in\mathbb{N}(1,\vert \zz\vert)\). Let 
\begin{itemize}
\item \(\BB(n_1,n_2)=\{(g,e)\in\BB\mid n_1\leq e\leq e+\vert \zz_g\vert-1\leq n_2\}\mbox{,}\)
    \item If \(\BB(n_1,n_2)\not=\emptyset\) then let \(\height(n_1,n_2)=\max\{g\mid (g,e)\in\BB(n_1,n_2)\}\) and 
\(\width(n_1,n_2)=\max\{e_1+\vert \zz_{g_1}\vert-e_2\mid (g_1,e_1),(g_2,e_2) \in\BB(n_1,n_2)\}\mbox{.}\)
\item If \(\BB(n_1,n_2)=\emptyset\) then let \(\width(n_1,n_2)=\height(n_1,n_2)=0\). 
\end{itemize}
We call \(\height(n_1,n_2)\) and \(\width(n_1,n_2)\) the \emph{height} and the \emph{width} of \((n_1,n_2)\), respectively.
\end{definition}

We present four simple properties of base positions. We omit the proof. 
\begin{lemma}
\label{dy894309kff}
Let \(g\in\mathbb{N}(1,\hh-1)\), \(d\in\mathbb{N}_1\), \(e_1<e_2<\dots<e_d\in\mathbb{N}(1,\vert \zz\vert)\) be such that \((g,e_i)\in\BB\) for all \(i\in\mathbb{N}(1,d)\) and if \((g,e)\in\BB\) then \(e=e_i\) for some \(i\in\mathbb{N}(1,d)\). 
We have that: 
\begin{enumerate}[ref=Q\arabic*,label=Q\arabic*:]
\item \label{iieuf9ktr} If \(1\equiv i\pmod{2}\) then \((g+1,e_i)\in\BB\) and \(e_i+\vert \zz_{g+1}\vert -1=e_{i+1}+\vert \zz_{g}\vert-1\),
\item \label{dhu9878ejd}  if \(i<d\) and \(n_1\leq n_2\in\mathbb{N}(e_i+1, e_{i+1}-1)\) then \(\width(n_1,n_2)<\vert \zz_g\vert\),  
\item \label{hd887edj2s}  if \(i<d\) and \(n_1\leq n_2\in\mathbb{N}(e_i+\vert \zz_{g}\vert, e_{i+1}+\vert \zz_{g}\vert-2)\) then \(\width(n_1,n_2)<\vert \zz_g\vert\), and 
\item \label{i99kedvm} \(\bigcup_{i=1}^d\BB(e_i,e_i+\vert \zz_g\vert-1)=\{(j,e)\in\BB\mid j\leq g\}\).
\end{enumerate}
\end{lemma}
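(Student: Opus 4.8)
The four statements all follow by unwinding the recursive definition of $\BB_j$ together with the identity $\zz_{g+1}=\zz_g\widehat\zz_g\zz_g$; I would prove them in the listed order, since each leans on the structure exposed by the previous. The key bookkeeping fact is: the base positions of $\zz_g$ inside $\zz$ come in pairs that mirror each other under the outermost occurrence of $\zz_g$ in $\zz_{g+1}$, and $\widehat\BB_{j}$ is exactly $\add(\BB_{j-1},|\zz_{j-1}\widehat\zz_{j-1}|)$ — a translate of $\BB_{j-1}$ by the ``stride'' $s_{j-1}:=|\zz_{j-1}\widehat\zz_{j-1}|=|\zz_j|-|\zz_{j-1}|$. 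So the multiset of starting positions $e$ with $(g,e)\in\BB$ has a clean description level by level.

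For \ref{iieuf9ktr}: I would argue by induction on $\hh-g$ (equivalently, induction going down from $g=\hh-1$). If $(g,e)\in\BB$ then $\zz[e,e+|\zz_g|-1]=\zz_g$, and $e$ is a base position of $\zz_g$ ``created'' inside some $\zz_{g+1}$-occurrence; the two base positions of $\zz_g$ coming from a single base position $e'$ of $\zz_{g+1}$ are $e'$ and $e'+s_g$, and $(g+1,e')\in\BB$ while the pair $\{e',e'+s_g\}$ are consecutive in the sorted list $e_1<e_2<\dots<e_d$ (there is no base position of $\zz_g$ strictly between them, because the gap equals $s_g<|\zz_{g+1}|$ but is still large relative to $|\zz_g|$ by \ref{uyue4d9bh}, which gives $2|\zz_g|<|\zz_{g+1}|$, hence $|\zz_g|<s_g=|\zz_{g+1}|-|\zz_g|$). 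The parity statement $1\equiv i\pmod 2$ then just records that the ``left'' member of each such pair sits at an odd index. The endpoint identity $e_i+|\zz_{g+1}|-1=e_{i+1}+|\zz_g|-1$ is immediate from $e_{i+1}=e_i+s_g$ and $|\zz_{g+1}|=|\zz_g|+s_g$.

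For \ref{dhu9878ejd} and \ref{hd887edj2s}: given \ref{iieuf9ktr}, the interval $(e_i,e_{i+1})$ (resp.\ $(e_i+|\zz_g|,\,e_{i+1}+|\zz_g|-1)$) contains no base position of $\zz_g$ or higher, so any $(j,e)\in\BB(n_1,n_2)$ with $n_1\le n_2$ in that interval has $j<g$, hence $|\zz_j|<|\zz_g|$ and the whole occurrence $[e,e+|\zz_j|-1]$ sits inside a gap of length $<|\zz_g|$; summing over at most two adjacent such occurrences one still gets width $<|\zz_g|$ — here one uses that consecutive base positions of $\zz_{g-1}$ inside the gap are again spaced by $s_{g-1}$ and the gap is short enough. (For \ref{hd887edj2s} one is looking at the mirror image of the picture in \ref{dhu9878ejd} under the palindrome $\zz_{g+1}$, so it reduces to \ref{dhu9878ejd} via $\Mirror$.) Finally \ref{i99kedvm} is a direct induction on $g$: every base position of a palindrome $\zz_j$ with $j\le g$ lies inside some occurrence $[e_i,e_i+|\zz_g|-1]$ of $\zz_g$ (because each $\zz_{j+1}$-occurrence — hence each lower occurrence it spawns — is nested in a $\zz_g$-occurrence), and conversely $\BB(e_i,e_i+|\zz_g|-1)\subseteq\{(j,e)\in\BB\mid j\le g\}$ trivially since $|\zz_g|\le$ the window length forces $j\le g$.

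The only mildly delicate point is \ref{iieuf9ktr} — specifically verifying that the two base positions $e',e'+s_g$ of $\zz_g$ arising from one base position of $\zz_{g+1}$ are genuinely \emph{adjacent} in the global sorted list $e_1<\dots<e_d$, i.e.\ that no base position of $\zz_g$ coming from a \emph{different} $\zz_{g+1}$-occurrence falls between them. This is where the non-periodicity of the $\zz_i$ (via \ref{uyue4d9bh}, giving $|\zz_g|<s_g$) and the fact that distinct $\zz_{g+1}$-occurrences in $\zz$ are themselves well-separated (again by \ref{uyue4d9bh} applied one level up) are both needed; everything else is straightforward unwinding of the definitions, which is why the paper omits the proof.
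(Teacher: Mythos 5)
Your plan correctly identifies the key structural facts (the recursive translate structure of $\BB_j$ with stride $s_{j-1}=|\zz_j|-|\zz_{j-1}|$, the superincreasing inequalities $|\zz_g|<s_g<s_{g+1}<\cdots$ coming from Property~\ref{uyue4d9bh}, and the nesting of $\zz_j$-occurrences inside $\zz_g$-occurrences), and your treatments of \ref{iieuf9ktr} and \ref{i99kedvm} are essentially sound. The argument for \ref{dhu9878ejd}, however, has a real gap. You say the occurrence $[e,e+|\zz_j|-1]$ ``sits inside a gap of length $<|\zz_g|$'' and then sum ``over at most two adjacent such occurrences,'' but the interval $\mathbb{N}(e_i+1,e_{i+1}-1)$ has length $e_{i+1}-e_i-1\ge s_g-1\ge|\zz_g|$, so it is not short, and the spacing picture you invoke is off: $\mathbb{N}(e_i+1,e_{i+1}-1)$ contains exactly one base position of $\zz_{g-1}$, namely $e_i+s_{g-1}$. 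What actually closes the argument is the nesting observation you invoke for \ref{i99kedvm} but not here: every $(j,e)\in\BB$ with $j<g$ has its occurrence $[e,e+|\zz_j|-1]$ contained in some $\zz_g$-window $[e_{i'},e_{i'}+|\zz_g|-1]$, and since $[n_1,n_2]\subseteq\mathbb{N}(e_i+1,e_{i+1}-1)$ while consecutive $e_i$'s are at least $s_g>|\zz_g|$ apart, the only possibility is $i'=i$; thus every occurrence counted in $\width(n_1,n_2)$ lies in $\mathbb{N}(e_i+1,e_i+|\zz_g|-1)$, an interval of size $|\zz_g|-1$, which gives the bound directly.

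Your $\Mirror$-reduction of \ref{hd887edj2s} to \ref{dhu9878ejd} ``under the palindrome $\zz_{g+1}$'' also only applies when $i$ is odd, since only then do $e_i$ and $e_{i+1}$ sit inside a single $\zz_{g+1}$-occurrence; for $i$ even the interval $\mathbb{N}(e_i+|\zz_g|,e_{i+1}+|\zz_g|-2)$ straddles the gap between two distinct $\zz_{g+1}$-occurrences. Either repeat the nesting argument directly (the $\zz_g$-window at $e_i$ ends before $e_i+|\zz_g|$, and the one at $e_{i+2}$ starts after $e_{i+1}+|\zz_g|-2$, so only the window at $e_{i+1}$ is available and the same size-$(|\zz_g|-1)$ bound follows), or instead mirror under the full palindrome $\zz=\zz_{\hh}$: this carries base-position occurrence intervals to base-position occurrence intervals and takes the interval of \ref{hd887edj2s} for index $i$ to the interval of \ref{dhu9878ejd} for index $d-i$, covering both parities at once.
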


\begin{remark}
Less formally said, every second base position of \(\zz_{g}\) is also base position of \(\zz_{g+1}\) and if the height of an interval \((n_1,n_2)\) is strictly smaller than \(g\) and the interval \((n_1,n_2)\) overlaps with at most with one interval \((e_i,e_i+\vert\zz_g\vert-1)\) then the width of \((n_1,n_2)\) is strictly smaller than \(\vert\zz_g\vert\).
It is easy to see that the number of base positions of the palindrome \(\zz_g\) is equal to \(2^{\hh-g}\). These properties express the hierarchical structure of base positions mentioned in the introduction.
\end{remark}


We show that for any interval \((n_1,n_2)\) with the height \(\beta\), there are at most four intervals that contain the same base positions as \((n_1,n_2)\) and that have the width smaller or equal to \(\vert \zz_{\beta}\vert\). 
\begin{proposition}
    \label{id9e9ur6t}
    If \(n_1\leq n_2\in\mathbb{N}(1,\vert \zz\vert)\), 
\(\beta=\height(n_1,n_2)\), 
    then there is \({\YY}\subseteq \mathbb{N}^2(n_1,n_2)\), 
    such that \({\YY}\) has the following properties
\begin{enumerate}[ref=P\arabic*,label=P\arabic*:]
\item \label{yud83hjrf} \(\vert {\YY}\vert\leq {4}\),
\item \label{bv2ds4dfw} \(\width(n_3,n_4)\leq \vert \zz_{\beta}\vert\) for all \((n_3,n_4)\in{\YY}\), 
\item \label{udih7676rt} there is \((n_3,n_4)\in {\YY}\) such that \(\zz[n_3,n_4]=\zz_{\beta}\), and 
\item \label{dy9ej9rtik4} \(\BB(n_1,n_2)=\bigcup_{(n_3,n_4)\in {\YY}} \BB(n_3,n_4)\).
\end{enumerate}
\end{proposition}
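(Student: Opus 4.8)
The plan is to slice the window $\mathbb{N}(n_1,n_2)$ into at most four pieces, organised around the occurrences of $\zz_\beta$ that sit inside it at base positions. We may assume $\beta\ge 1$, i.e.\ $\BB(n_1,n_2)\ne\emptyset$ (the case $\beta=0$ is degenerate, $\zz_0$ being undefined, and is not needed). List all base positions of $\zz_\beta$ in $\zz$ as $e_1<e_2<\dots<e_d$, and let $e_a,e_{a+1},\dots,e_b$ be exactly those that fit inside $\mathbb{N}(n_1,n_2)$, i.e.\ $n_1\le e_i$ and $e_i+|\zz_\beta|-1\le n_2$; these indices are consecutive (if $e_a$ and $e_b$ fit, so does every $e_i$ between them), and $\height(n_1,n_2)=\beta$ forces $a\le b$. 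I would take $\YY$ to consist of the pairs $(e_a,e_a+|\zz_\beta|-1)$ and $(e_b,e_b+|\zz_\beta|-1)$, together with the flanking pairs $(n_1,e_a-1)$ and $(e_b+|\zz_\beta|,n_2)$ whenever these are nonempty. Then $|\YY|\le 4$ is P1, and $\zz[e_a,e_a+|\zz_\beta|-1]=\zz_\beta$ is P3.

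The one genuinely new ingredient is a \emph{gap bound}: consecutive base positions of $\zz_\beta$ differ by more than $|\zz_\beta|$. From the recursive definition of $\BB_j$ one shows by induction that the base positions of $\zz_\beta$ in $\zz=\zz_\hh$ are exactly the integers $1+\sum_{i\in S}|\zz_i\widehat\zz_i|$ with $S\subseteq\mathbb{N}(\beta,\hh-1)$; since $|\zz_{i+1}|=|\zz_i|+|\zz_i\widehat\zz_i|$, the sequence $\bigl(|\zz_i\widehat\zz_i|\bigr)_{i\ge\beta}$ obeys $|\zz_j\widehat\zz_j|=|\zz_\beta|+|\widehat\zz_j|+\sum_{i=\beta}^{j-1}|\zz_i\widehat\zz_i|$, hence is super-increasing, so distinct such subset sums differ by at least $|\zz_\beta|+1$. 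Combined with Property \ref{iieuf9ktr} this also yields $b\le a+1$: if $e_a,e_{a+1},e_{a+2}$ all fit, then the one of $e_a,e_{a+1}$ with odd index, say $e_p$, is a base position of $\zz_{\beta+1}$ with right end $e_p+|\zz_{\beta+1}|-1=e_{p+1}+|\zz_\beta|-1\le e_{a+2}+|\zz_\beta|-1\le n_2$ and $e_p\ge e_a\ge n_1$, so $(\beta+1,e_p)\in\BB(n_1,n_2)$, contradicting $\height(n_1,n_2)=\beta$ (when $\beta=\hh$ we have $d=1$ and there is nothing to prove).

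Everything else follows from the gap bound and Property \ref{i99kedvm}. For the inclusion $\subseteq$ in P4: every $(j,e)\in\BB(n_1,n_2)$ has $j\le\beta$, so by Property \ref{i99kedvm} it lies in some $\BB(e_i,e_i+|\zz_\beta|-1)$; if $i\in\{a,b\}$ it lies in a middle pair; if $i<a$ (so necessarily $a\ge 2$) then $e_i$ cannot fit and cannot stick out to the right since $e_i<e_a$, hence $e_i<n_1$, and the gap bound gives $e_i+|\zz_\beta|-1\le e_{a-1}+|\zz_\beta|-1\le e_a-1$, so $(j,e)\in\BB(n_1,e_a-1)$; symmetrically $i>b$ puts $(j,e)$ in $\BB(e_b+|\zz_\beta|,n_2)$, using $e_i\ge e_{b+1}\ge e_b+|\zz_\beta|+1$; since $b\le a+1$ no other $i$ occurs. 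The reverse inclusion is immediate because each pair of $\YY$ lies in $\mathbb{N}(n_1,n_2)$. For P2: within an occurrence $\mathbb{N}(e_i,e_i+|\zz_\beta|-1)$ of $\zz_\beta$ any $(g_1,f_1),(g_2,f_2)\in\BB$ satisfy $f_1+|\zz_{g_1}|-f_2\le|\zz_\beta|$, so its width, hence that of any subinterval, is $\le|\zz_\beta|$, which handles the two middle pairs; and $\mathbb{N}(n_1,e_a-1)\subseteq\mathbb{N}(e_{a-1}+1,e_a-1)$ and $\mathbb{N}(e_b+|\zz_\beta|,n_2)\subseteq\mathbb{N}(e_b+|\zz_\beta|,e_{b+1}+|\zz_\beta|-2)$ (the inclusions using $e_{a-1}<n_1$, $e_{b+1}+|\zz_\beta|-1>n_2$, and the gap bound), so Properties \ref{dhu9878ejd} and \ref{hd887edj2s} give width $<|\zz_\beta|$ there; the boundary subcases $a=1$ and $b=d$ are trivial, since then the corresponding flanking $\BB$-set is empty. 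Finally $\YY\subseteq\mathbb{N}^2(n_1,n_2)$ is immediate from the fitting conditions.

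The step I expect to be the main obstacle is the gap bound. Merely from $\order(\zz_\beta)<2$ one gets a spacing $>|\zz_\beta|/2$ (two overlapping occurrences of $\zz_\beta$ would force a short period), but that is too weak to keep $|\YY|\le 4$; the sharper bound $>|\zz_\beta|$ genuinely uses the nesting $\zz_{i+1}=\zz_i\widehat\zz_i\zz_i$, which is why the explicit super-increasing description of base positions is needed. The remaining care is in invoking Property \ref{iieuf9ktr} to bound the number of fitting occurrences of $\zz_\beta$, and in the bookkeeping of the several boundary subcases ($\beta=\hh$, $a=1$, $b=d$, empty flanks).
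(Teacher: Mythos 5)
Your construction of $\YY$ matches the paper's: the paper sets $H=\{(\beta,e)\in\BB(n_1,n_2)\}$, shows $\vert H\vert\le 2$ via Property \ref{iieuf9ktr} exactly as you do, forms $Y_1$ from the occurrences $(e,e+\vert\zz_\beta\vert-1)$ with $(\beta,e)\in H$, and appends the two flanking intervals $(n_1,e_a-1)$ and $(e_b+\vert\zz_\beta\vert,n_2)$ when nonempty. It then applies Properties \ref{dhu9878ejd} and \ref{hd887edj2s} to the flanks for \ref{bv2ds4dfw} and invokes Property \ref{i99kedvm} for \ref{dy9ej9rtik4}, observing separately that $\BB(e_1+\vert\zz_\beta\vert,e_2-1)=\emptyset$ when $\vert H\vert=2$. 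So your decomposition and the way you deploy Q1--Q4 coincide with the paper's.

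The one place you add content is the explicit \emph{gap bound} --- that consecutive base positions of $\zz_\beta$ are separated by more than $\vert\zz_\beta\vert$ --- which you derive from the subset-sum characterization $1+\sum_{i\in S}\vert\zz_i\widehat\zz_i\vert$, $S\subseteq\mathbb{N}(\beta,\hh-1)$, and the super-increasing inequality $\vert\zz_j\widehat\zz_j\vert\ge\vert\zz_\beta\vert+1+\sum_{i=\beta}^{j-1}\vert\zz_i\widehat\zz_i\vert$. This is correct, and it is in fact needed: when you classify a $(j,e)\in\BB(n_1,n_2)$ by the index $i$ for which $(j,e)\in\BB(e_i,e_i+\vert\zz_\beta\vert-1)$ (via Property \ref{i99kedvm}), the case $i<a$ requires $e_{a-1}+\vert\zz_\beta\vert-1\le e_a-1$ to land $(j,e)$ inside the left flank $\BB(n_1,e_a-1)$, and similarly on the right. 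The paper writes ``Property \ref{i99kedvm} implies that $\BB(n_1,n_2)=\bigcup\BB(n_3,n_4)$'' without surfacing this disjointness fact; it is folded into the unproved Lemma \ref{dy894309kff} and its surrounding remark (which speaks of overlapping ``at most one interval''). So your proof is not a different route but a more carefully bookkept version of the same one; the extra lemma you isolate is what makes the \ref{dy9ej9rtik4} step airtight rather than folklore.
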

\begin{proof}
Let \(H=\{(\beta,e)\in\BB(n_1,n_2)\}\). Clearly \({H}\not=\emptyset\), since \(\beta=\height(n_1,n_2)\). From Property \ref{iieuf9ktr} of Lemma \ref{dy894309kff} it follows that 
\(\vert H\vert \leq 2\). 

Let \({\YY}_1=\{(e,e+\vert\zz_{\beta}\vert-1)\mid (\beta,e)\in {H}\}\). Clearly if \((i_1,i_2)\in{\YY}_1\) then \(\width(i_1,i_2)=\vert \zz_{\beta}\vert\).  

Let \(n_3=\min\{e-1\mid (\beta,e)\in {H}\}\) and \(n_4=\max\{e+\vert \zz_{\beta}\vert\mid (\beta,e)\in{H}\}\). 
Let \({\YY}_2, \YY_3\subset \mathbb{N}^2(n_1,n_2)\) be such that 
\begin{itemize}
\item If \(n_1\leq n_3\) then \(\{(n_1,n_3)\}= {\YY}_2\) otherwise \(\YY_2=\emptyset\). 
\item If \(n_4\leq n_2\) then \(\{(n_4,n_2)\}= {\YY}_3\)  otherwise \(\YY_3=\emptyset\). 
\end{itemize}
Obviously if \((i_1,i_2)\in {\YY}_2\cup\YY_3\) then  \(\height(i_1,i_2)<\beta\).  
Property \ref{dhu9878ejd} implies that if \((i_1,i_2)\in {\YY}_2\) then \(\width(i_1,i_2)<\vert \zz_{\beta}\vert\) and Property \ref{hd887edj2s} implies that if \((i_1,i_2)\in {\YY}_3\) then \(\width(i_1,i_2)<\vert \zz_{\beta}\vert\).
Let \({\YY}={\YY}_1\cup{\YY}_2\cup\YY_3\). We have that \(\vert {\YY}_1\vert=\vert {H}\vert\leq 2\) and \(\vert {\YY}_2\vert,\vert {\YY}_3\vert\leq 1\).  It follows that \(\vert \YY\vert\leq 4\).

Property \ref{i99kedvm} implies that \(\BB(n_1,n_2)=\bigcup_{(n_3,n_4)\in {\YY}} \BB(n_3,n_4)\); realize that if \(\vert H\vert=2\), \((\beta,e_1),(\beta,e_2)\in{H}\), \(e_1<e_2\), then Property \ref{i99kedvm} asserts that \(\BB(e_1+\vert \zz_{\beta}\vert,e_2-1)=\emptyset\).
The lemma follows. This ends the proof.
\end{proof}

%
Let \[\begin{split}\UC=\{S\subseteq\mathbb{N}(1,\vert \zz\vert)\mid \mbox{ if } \mu_1\leq \mu_2\in\mathbb{N}(1,\vert \zz\vert)\mbox{ and }\xi\in\Period(\zz[\mu_1,\mu_2])\\\mbox{ then there is }\EE\subseteq\mathbb{N}(\mu_1,\mu_2)\mbox{ such that }\vert \EE\vert \leq c_2\\\mbox{ and }S\cap\bigcup_{\delta\in \EE}\mathbb{N}(\delta,\delta+\xi-1)=S\cap\mathbb{N}(\mu_1,\mu_2)\}\mbox{.}\end{split}\]

\begin{proposition}
\label{duid0jfgh}
We have that \(\widetilde\BB\in\UC\).
\end{proposition}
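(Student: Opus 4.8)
The plan is to show $\widetilde\BB\in\UC$ directly from the definition of $\UC$: given any interval $[\mu_1,\mu_2]$ with period $\xi\in\Period(\zz[\mu_1,\mu_2])$, we must produce a set $\EE\subseteq\mathbb{N}(\mu_1,\mu_2)$ with $\vert\EE\vert\le c_2=8$ such that $\widetilde\BB\cap\mathbb{N}(\mu_1,\mu_2)$ is contained in $\bigcup_{\delta\in\EE}\mathbb{N}(\delta,\delta+\xi-1)$. The key structural fact to exploit is that the base positions of $\zz_1$ inside $[\mu_1,\mu_2]$ come from the binary-tree hierarchy of the palindromes $\zz_1,\dots,\zz_\hh$, and that each $\zz_i$ is \emph{non-periodic} ($\order(\zz_i)<2$), so $\zz_i$ cannot ``fit'' too many times inside a periodic block of length $\xi$.

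First I would set $\beta=\height(\mu_1,\mu_2)$ and apply Proposition~\ref{id9e9ur6t} to replace $(\mu_1,\mu_2)$ by a family $\YY$ of at most $4$ subintervals, each of width at most $\vert\zz_\beta\vert$, whose base-position sets union to $\BB(\mu_1,\mu_2)$, and one of which is an occurrence of $\zz_\beta$ itself. Since $\widetilde\BB\cap\mathbb{N}(\mu_1,\mu_2)=\{e\mid (g,e)\in\bigcup_{(n_3,n_4)\in\YY}\BB(n_3,n_4)\}$, it suffices to cover, for each of these $\le 4$ intervals $(n_3,n_4)$, the corresponding base positions by a bounded number of blocks of length $\xi$; then $\EE$ is the union, giving $\vert\EE\vert\le 4\cdot(\text{const})$. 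The main case is the interval carrying an occurrence of $\zz_\beta$: there $\zz[n_3,n_4]=\zz_\beta$ is a non-periodic palindrome, so by Property~\ref{hhd73d22d} (or a direct Fine--Wilf argument) the period $\xi$ restricted to this occurrence forces $\zz_\beta$ to have order $<2$, which in turn bounds how far the occurrence can stretch relative to $\xi$: essentially $\vert\zz_\beta\vert<2\xi$ unless $\xi$ itself is at least comparable with $\vert\zz_\beta\vert$. Either way the interval $(n_3,n_4)$, of width $\le\vert\zz_\beta\vert<2\xi$, is covered by two blocks $\mathbb{N}(\delta,\delta+\xi-1)$; for the two ``side'' intervals of $\YY$, which have width $<\vert\zz_\beta\vert$ by Properties~\ref{dhu9878ejd}--\ref{hd887edj2s}, the same estimate applies.

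The hard part will be bookkeeping the constant: one must be careful that ``width $\le\vert\zz_\beta\vert$'' together with the non-periodicity of $\zz_\beta$ really yields width $<2\xi$ (so each piece needs only two $\xi$-blocks), and then that $\YY$ contributes $\le 4$ pieces, so $\vert\EE\vert\le 8=c_2$. This is where Property~\ref{hhd73d22d} is used: if the occurrence of $\zz_\beta$ had width $\ge 2\xi$ while $\xi$ were a proper period, then $\zz_\beta\in\Prefix((p_1p_2)^\infty)$ for the palindromic couple of period $\xi$, and $\vert\zz_\beta\vert\ge 2\xi$ would make $\order(\zz_\beta)\ge 2$, contradicting $\zz_\beta\in\NPP(\zz)$. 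The one subtlety is the degenerate case $\xi\ge\vert\zz_\beta\vert$ (always a period by the Remark after $\Period$), where a single block already covers the whole interval, so the bound is trivially satisfied. Assembling these cases, $\EE=\bigcup_{(n_3,n_4)\in\YY}\{\text{the one or two }\xi\text{-block starts covering }(n_3,n_4)\}$ has $\vert\EE\vert\le c_2$, and by Property~\ref{dy9ej9rtik4} it catches every base position of $\zz_1$ in $\mathbb{N}(\mu_1,\mu_2)$, which is exactly the condition for $\widetilde\BB\in\UC$.
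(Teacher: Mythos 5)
Your proposal takes essentially the same route as the paper: decompose via Proposition~\ref{id9e9ur6t} into at most four pieces, then use that $\zz_\beta$ is non-periodic with $\xi\in\Period(\zz_\beta)$ to conclude $\vert\zz_\beta\vert<2\xi$, so each piece needs at most two $\xi$-blocks and $\vert\EE\vert\le 8=c_2$. One minor note: the appeal to Property~\ref{hhd73d22d} is unnecessary and slightly misplaced---the bound $\vert\zz_\beta\vert<2\xi$ follows immediately from $\order(\zz_\beta)<2$ together with $\mper(\zz_\beta)\le\xi$, which is exactly what the paper uses.
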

\begin{proof}
Suppose \(n_1\leq n_2\in\mathbb{N}(1,\vert \zz\vert)\) and \(\xi\in\Period(\zz[n_1,n_2])\).
Let \(\beta=\height(n_1,n_2)\) and let \(\YY\subseteq\mathbb{N}^2(n_1,n_2)\) be as in Proposition \ref{id9e9ur6t}.
Let \((n_3,n_4)\in \YY\) be as in Property \ref{udih7676rt}; it means that \(\zz[n_3,n_4]=\zz_{\beta}\). Since \(\order(\zz_{\beta})<2\) it follows that \(n_4-n_3+1<2\xi\).  Then Property \ref{bv2ds4dfw} implies that \(n_6-n_5+1<2\xi\) for every \((n_5,n_6)\in \YY\).
Let \(\EE=\{\delta_1,\delta_1+\xi\mid (\delta_1,\delta_2)\in \YY\}\). From Property \ref{dy9ej9rtik4} it is obvious that \(\widetilde\BB\cap\bigcup_{\delta\in \EE}\mathbb{N}(\delta,\delta+\xi-1)=\widetilde\BB\cap\mathbb{N}(n_1,n_2)\).
Property \ref{yud83hjrf} implies \(\vert \EE\vert \leq c_2\).  
We conclude that \(\BB\in\UC\). This completes the proof.
\end{proof}

\section{Covering base positions}
Given \(m\in\mathbb{N}_0\), 
let \(\psi(m)=\max\{\vert D\cap S\vert \mid D\in\varphi(\NestPer(m))\mbox{ and }S\in\UC\}\).
We present an upper bound on the function \(\psi\).
\begin{proposition}
\label{rr7tff5}
If \(m\in\mathbb{N}_0\) then 
\(\psi(m)\leq \lambda(\hh,m)\). 
\end{proposition}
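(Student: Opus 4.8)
The plan is to prove the bound by induction on $m$. For the base case $m=0$, recall that $\NestPer(0)=\{(D,\xi)\in\widetilde\NestPer\mid |D|=1\}$, so every $D\in\varphi(\NestPer(0))$ is either $\emptyset$ or a singleton; hence $|D\cap S|\le 1=\lambda(\hh,0)$ for every $S\in\UC$, which gives $\psi(0)\le\lambda(\hh,0)$.

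For the inductive step I would fix $m\ge 1$, assume $\psi(m-1)\le\lambda(\hh,m-1)$, and take an arbitrary $(D,\xi)\in\NestPer(m)$ together with $S\in\UC$ (the case $D=\emptyset$ being trivial). Write $\mu_1=\min(D)$ and $\mu_2=\max(D)$. Since $(D,\xi)\in\widetilde\NestPer$ we have $\xi\in\Period(\zz[\mu_1,\mu_2])$, so the membership $S\in\UC$ supplies a set $\EE\subseteq\mathbb{N}(\mu_1,\mu_2)$ with $|\EE|\le c_2$ and $S\cap\bigcup_{\delta\in\EE}\mathbb{N}(\delta,\delta+\xi-1)=S\cap\mathbb{N}(\mu_1,\mu_2)$. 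As $D\subseteq\Close(D)=\mathbb{N}(\mu_1,\mu_2)$, this yields $D\cap S=\bigcup_{\delta\in\EE}\bigl(D\cap\mathbb{N}(\delta,\delta+\xi-1)\cap S\bigr)$.

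The key step is then to observe that for each $\delta\in\EE$ the set $\overline D_\delta=D\cap\mathbb{N}(\delta,\delta+\xi-1)$ has diameter at most $\xi$ and is therefore a $\xi$-cut of $D$; by the defining property of $\NestPer(m)$ there is $M_\delta\subseteq\NestPer(m-1)$ with $|M_\delta|\le\theta(m)$ and $\overline D_\delta\subseteq\widetilde\varphi(M_\delta)$. Every cluster occurring in $M_\delta$ lies in $\varphi(\NestPer(m-1))$, so the inductive hypothesis bounds the intersection of each such cluster with $S$ by $\psi(m-1)\le\lambda(\hh,m-1)$; summing (the $\emptyset$ term of $\varphi(M_\delta)$ contributing nothing) gives $|\overline D_\delta\cap S|\le\theta(m)\lambda(\hh,m-1)$, and summing over $\EE$ gives $|D\cap S|\le c_2\theta(m)\lambda(\hh,m-1)$.

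Finally I would close the induction with the purely arithmetic inequality $c_2\theta(m)\lambda(\hh,m-1)\le\lambda(\hh,m)$: the left-hand side equals $c_2^{m}(2c_1c_3\hh)^{m+(m-1)^2}=c_2^{m}(2c_1c_3\hh)^{m^2-m+1}$, which is at most $c_2^{m}(2c_1c_3\hh)^{m^2}=\lambda(\hh,m)$ since $m^2-m+1\le m^2$ for every $m\ge 1$ and $2c_1c_3\hh\ge 1$. I do not anticipate a genuine obstacle here; the argument is a clean induction, and the only points needing care are recognising $D\cap\mathbb{N}(\delta,\delta+\xi-1)$ as a legitimate $\xi$-cut so that the recursive definition of $\NestPer(m)$ applies, and keeping track of the $\emptyset$-convention in $\varphi(M)$ so that it is not accidentally over-counted.
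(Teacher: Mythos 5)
Your proof is correct and follows essentially the same route as the paper's: both decompose $D\cap S$ via the $\UC$ property into at most $c_2$ sets of diameter $\le\xi$, recognise each as a $\xi$-cut covered by at most $\theta(m)$ degree-$(m-1)$ clusters, and apply the inductive bound to each cluster's intersection with $S$. The only (inessential) difference is at the final arithmetic step, where you verify the inductive inequality $c_2\theta(m)\lambda(\hh,m-1)\le\lambda(\hh,m)$ directly, while the paper unrolls the recurrence $\psi(m)\le c_2\theta(m)\psi(m-1)$ to the product $c_2^m\prod_{i=1}^m\theta(i)$ and then bounds the resulting exponent.
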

\begin{proof}
It is clear that  that \(\psi(0)=1\). Hence 
the lemma holds for \(m=0\). Suppose \(m\geq 1\) and suppose that the lemma holds for \(m-1\).
Let \((D,\xi)\in\NestPer(m)\) and \(S\in \UC\). Let \(\mu_1=\min(D)\) and \(\mu_2=\max(D)\).
From the definition of \(\UC\) we have that there is \(\EE\in\mathbb{N}(\mu_1,\mu_2)\) such that \(\vert \EE\vert\leq c_2\) and \(S\cap\bigcup_{\delta\in \EE}\mathbb{N}(\delta,\delta+\xi-1)=S\cap\mathbb{N}(\mu_1,\mu_2)\).

Given \(\delta\in \EE\), let \(D_{\delta}= D\cap \mathbb{N}(\delta,\delta+\xi-1)\).
Property \ref{uj499b} and the definition of \(\NestPer(m)\) imply that \(\omega(m-1,D_{\delta})\leq \theta(m)\). 
Let \(D_{\delta,j}\in\varphi(\NestPer(m-1))\) be such that \(D_{\delta}=\bigcup_{j=1}^{\theta(m)}D_{\delta,j}\).

By induction \(\vert D_{\delta,j}\cap S\vert\leq \psi(m-1)\). It follows that  \(\vert D_{\delta}\cap S\vert\leq \theta(m)\psi(m-1)\). Then since \(\delta\in\mathbb{N}(1,c_2)\) we have that \(\psi(m)=\vert D\cap S\vert\leq c_2\theta(m)\psi(m-1)\).
Because \(\psi(0)=1\) it follows that  \[\begin{split}\psi(m)\leq c_2\theta(m)\psi(m-1)\leq c_2\theta(m)c_2\theta(m-1)\psi(m-2)\leq \dots\leq 
c_2^m\prod_{i=1}^{m}\theta(i)\leq \\
     c_2^m\prod_{i=1}^{m}(2c_1c_3\hh)^{i}\leq c_2^m(2c_1c_3\hh)^{\sum_{i=1}^{m} i}\leq c_2^m(2c_1c_3\hh)^{m^2}=\lambda(\hh,m)\mbox{.}
\end{split}\]

This completes the proof.
\end{proof}
We define that \(\Cover(0)=\{1\}\).
\begin{lemma}
\label{dhuf9ekkti}
If \(m\in\mathbb{N}_1\) then \(\Cover(m)\subseteq\sigma(\PalExt(\Cover(m-1)))\).
\end{lemma}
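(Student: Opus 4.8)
The plan is to fix an arbitrary $n\in\Cover(m)$ and to exhibit a position $n'\in\Cover(m-1)$ together with a palindromic extension tuple $(n',p_1,p_2,\alpha)\in\PalExt(n')$ with $\sigma(n',p_1,p_2,\alpha)=n$; this gives $n\in\sigma(\PalExt(n'))\subseteq\sigma(\PalExt(\Cover(m-1)))$, and since $n$ is arbitrary the lemma follows. The underlying idea is elementary: peel off the last block of a shortest $\bb$-separated palindromic factorisation of $\zz[2,n-1]$ and take $n'$ to be the position of the separator $\bb$ immediately preceding that block.

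Concretely, I would first note that, since $\zz[n]=\bb$, $\bb\in\Prefix(\zz)$, and every second letter of $\zz$ is $\bb$, the word $\zz[2,n-1]$ is a nonempty word of $\pad(\Sigma_0^+)$, so $\PPL(\zz[2,n-1])=m$ yields palindromes $q_1,\dots,q_m\in\Sigma^+$ with $\zz[2,n-1]=q_1\bb q_2\bb\cdots\bb q_m$; moreover each $q_i$ has odd length and occupies a block of even positions of $\zz$, so every $\bb$ occurring in this factorisation sits at an odd position of $\zz$. If $m\ge 2$, let $n'$ be the position of the last such $\bb$. Then $\zz[n']=\bb$, $\zz[2,n'-1]=q_1\bb\cdots\bb q_{m-1}$, $\zz[n'+1,n-1]=q_m$, and $3\le n'\le n-2$; hence $\PPL(\zz[2,n'-1])\le m-1$, while $\PPL(\zz[2,n'-1])\le m-2$ would give $\PPL(\zz[2,n-1])\le m-1<m$ by adjoining $\bb q_m$ to a witnessing factorisation, a contradiction. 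So $\PPL(\zz[2,n'-1])=m-1$ and $n'\in\Cover(m-1)$. If $m=1$ I set $n'=1\in\Cover(0)$ and $q_1=\zz[2,n-1]$.

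Next, in either case put $q=q_m$ (so $q=q_1$ when $m=1$). Then $\zz[n',n]=\bb\,q\,\bb$ is a nonempty palindrome and a prefix of $\zz[n',|\zz|]$. By the remark following the definition of $\PalExt(n)$, there is $(n',p_1,p_2,\alpha)\in\PalExt(n')$ with $(p_1p_2)^{\alpha}p_1=\zz[n',n]$, so that $\sigma(n',p_1,p_2,\alpha)=n'-1+|(p_1p_2)^{\alpha}p_1|=n'-1+(n-n'+1)=n$, which completes the argument.

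I expect the only delicate point to be the positional bookkeeping: checking that the separator preceding the last palindromic block really falls on a letter $\bb$ of $\zz$ (forced by the alternating padded structure, which makes each palindrome in the factorisation odd-length) and that $\zz[2,n-1]$ and $\zz[2,n'-1]$ genuinely belong to $\pad(\Sigma_0^+)$, so that the padded palindromic lengths involved are defined. Once that is in place, the proof is simply ``split off the last palindrome and re-wrap it with the separator $\bb$.''
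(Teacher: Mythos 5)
Your proof is correct and uses the same peel-off-the-last-palindrome idea as the paper, which picks $t\in\Sigma^*$ and $p\in\Pal^+$ with $t\bb p\bb\in\Prefix(\zz)$, $|t\bb|\in\Cover(m-1)$, $|t\bb p\bb|\in\Cover(m)$, writes $\bb p\bb=(p_1p_2)^\alpha p_1$, and reads off the palindromic extension tuple at $n=|t\bb|$. You additionally spell out the bookkeeping the paper leaves implicit --- that the last separator $\bb$ of an optimal $\PPL$-factorisation of $\zz[2,n-1]$ necessarily falls on an odd position of $\zz$, that this position indeed lies in $\Cover(m-1)$ by an exchange argument, and the $m=1$ base case via $\Cover(0)=\{1\}$ --- which is a welcome level of detail.
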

\begin{proof}
Let \(t\in\Sigma^*\) and \(p\in\Pal^+\) be such that \(t\bb p\bb\in\Prefix(\zz)\), \(\vert t\bb\vert\in\Cover(m-1)\), and \(\vert t\bb p\bb\vert\in\Cover(m)\). There is \((p_1,p_2)\in\PalCouple\) and \(\alpha\in\mathbb{N}_1\) such that \((p_1p_2)^{\alpha}p_1=\bb p\bb\). Let \(n=\vert t\bb\vert\). It follows that \((n,p_1,p_2,\alpha)\in\PalExt(n)\), \(n\in\Cover(m-1)\), and \(\sigma(n,p_1,p_2,\alpha)\in\Cover(m)\).
The lemma follows. This ends the proof.
\end{proof}
Let \(\widehat \Cover(m)=\omega(m,\Cover(m))\). We show that all the positions \(n\in\mathbb{N}(1,\vert \zz\vert)\) with the padded palindromic length of \(\zz[2,n-1]\) equal to \(m\) are covered by \((c_3\hh)^m\) NPS clusters of degree \(m\).
\begin{lemma}
\label{ry88dbh3v4}
    If \(m\in\mathbb{N}_0\) then \(\widehat \Cover(m)\leq (c_3\hh)^m\).
\end{lemma}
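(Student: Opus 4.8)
The plan is to prove this by induction on $m \in \mathbb{N}_0$. The base case $m=0$ is immediate: $\Cover(0)=\{1\}$ by definition, and $(\{1\},1)\in\NestPer(0)$, so $\{(\{1\},1)\}$ is an NPS cover of $\Cover(0)$ of size $1=(c_3\hh)^0$, giving $\widehat\Cover(0)=\omega(0,\Cover(0))\le 1$. For the inductive step, assume $m\ge 1$ and $\widehat\Cover(m-1)=\omega(m-1,\Cover(m-1))\le (c_3\hh)^{m-1}$.

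The key engine is Proposition \ref{uibv78vg}: since $\Omega=\mathbb{N}_1$ by Corollary \ref{yysbn44df}, every $m-1\in\Omega$, and hence for every $(D,\xi)\in\NestPer(m-1)$ the palindromic extensions $\sigma(\PalExt(D))$ satisfy $\omega(m,\sigma(\PalExt(D)))\le 1+c_1+4\hh\le c_3\hh$ (using $c_1=5$, $c_3=10$). First I would take a minimal NPS cover $M=\minNPSCov(m-1,\Cover(m-1))$, so $|M|\le (c_3\hh)^{m-1}$ and $\Cover(m-1)\subseteq\widetilde\varphi(M)=\bigcup_{(D,\xi)\in M}D$. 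By Lemma \ref{dhuf9ekkti}, $\Cover(m)\subseteq\sigma(\PalExt(\Cover(m-1)))$, and since $\PalExt$ is monotone in its argument (it is a union over positions), $\sigma(\PalExt(\Cover(m-1)))\subseteq\sigma(\PalExt(\widetilde\varphi(M)))=\bigcup_{(D,\xi)\in M}\sigma(\PalExt(D))$. So $\Cover(m)$ is covered by the union, over the at most $(c_3\hh)^{m-1}$ members $(D,\xi)$ of $M$, of the sets $\sigma(\PalExt(D))$.

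Now for each such $(D,\xi)\in M\subseteq\NestPer(m-1)$, Proposition \ref{uibv78vg} gives an NPS cover of $\sigma(\PalExt(D))$ by at most $c_3\hh$ clusters of degree $m$; concretely there is $M_{(D,\xi)}\in\NPSCover(m,\sigma(\PalExt(D)))$ with $|M_{(D,\xi)}|\le c_3\hh$. Taking $M'=\bigcup_{(D,\xi)\in M}M_{(D,\xi)}$ yields $M'\subseteq\NestPer(m)$ with $\Cover(m)\subseteq\widetilde\varphi(M')$ and $|M'|\le (c_3\hh)^{m-1}\cdot c_3\hh=(c_3\hh)^m$. One should also check the containment condition $\widetilde\varphi(M')\subseteq\Close(\Cover(m))$ needed for $M'$ to be a genuine element of $\NPSCover(m,\Cover(m))$; this follows because each $M_{(D,\xi)}$ already satisfies $\widetilde\varphi(M_{(D,\xi)})\subseteq\Close(\sigma(\PalExt(D)))$ and one intersects with $\Close(\Cover(m))$ using Property \ref{uj499b} to stay inside $\NestPer(m)$ (this is the same routine "trimming" used repeatedly, e.g. in the proof of Lemma \ref{d8733j5r9}). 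Hence $\widehat\Cover(m)=\omega(m,\Cover(m))\le |M'|\le (c_3\hh)^m$, completing the induction.

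The main obstacle is bookkeeping rather than a genuine difficulty: one must be careful that $\sigma(\PalExt(\cdot))$ genuinely distributes over the union $\widetilde\varphi(M)$ — it does, since $\PalExt(D)=\bigcup_{n\in D}\PalExt(n)$ by definition and the $\sigma$ of a position depends only on that position — and one must verify the $\Close$-containment so the constructed $M'$ is a legitimate NPS cover and not merely a family whose union contains $\Cover(m)$. Both are handled by the standard restriction-to-interval arguments already established via Property \ref{uj499b}. The arithmetic $1+c_1+4\hh\le c_3\hh$ holds for $\hh\ge 1$ since $c_3\hh=10\hh$ and $1+c_1+4\hh=6+4\hh\le 10\hh$ whenever $\hh\ge 1$.
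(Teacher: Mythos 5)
Your proof follows essentially the same route as the paper: induct on $m$, take a minimal NPS cover $M=\minNPSCov(m-1,\Cover(m-1))$, push $\Cover(m)$ through $\sigma(\PalExt(\cdot))$ over the clusters in $M$ via Lemma~\ref{dhuf9ekkti}, and bound each $\omega(m,\sigma(\PalExt(D)))$ by $c_3\hh$. The extra care you take to verify the $\Close$-containment condition in the definition of $\NPSCover(m,\Cover(m))$ is a point the paper glosses over; your remedy (trimming via Property~\ref{uj499b}) is the right one.

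One small misstep in the citation: you invoke Proposition~\ref{uibv78vg} with the hypothesis ``$m-1\in\Omega$,'' but when $m=1$ you have $m-1=0\notin\mathbb{N}_1=\Omega$, so Proposition~\ref{uibv78vg} does not literally apply to the first inductive step. The clean way to get $\omega(m,\sigma(\PalExt(D)))\le c_3\hh$ for all $m\in\mathbb{N}_1$ and $(D,\xi)\in\NestPer(m-1)$ is to cite Corollary~\ref{yysbn44df} together with the \emph{definition} of $\Omega$ (which is phrased exactly in this form, with the degree-$(m-1)$ cluster on the inside and degree $m$ on the outside); this is what the paper does, and it covers $m=1$ via $1\in\Omega$ (Lemma~\ref{iodi9efjg}). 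The conclusion you reach is correct; only the intermediate reference needs adjusting.
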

\begin{proof}
    We have that \(\widehat \Cover(0)= 1\), since \((\{1\},1)\in\NestPer(0)\). Thus the lemma holds for \(m=0\). By induction, 
    suppose \(m\geq 1\) and suppose that \begin{equation}        
    \label{tty3nbvk933}\widehat \Cover(m-1)\leq (c_3\hh)^{m-1}\mbox{.}\end{equation}
    Let \(M=\minNPSCov(m-1,\Cover(m-1)\)).
    From (\ref{tty3nbvk933}) we have that \(\vert M\vert\leq (c_3\hh)^{m-1}\). 
    From Lemma \ref{dhuf9ekkti}, it follows then that \[\Cover(m)\subseteq\sigma(\PalExt(\Cover(m-1)))\subseteq \bigcup_{(D,\xi)\in M}\sigma(\PalExt(D))\mbox{.}\]
    Corollary \ref{yysbn44df} implies that \(\omega(m,\sigma(\PalExt(D)))\leq c_3\hh\) for every \((D,\xi)\in M\). It follows that \(\omega(m,\Cover(m))\leq (c_3\hh)^m\). 
    The lemma follows. This completes the proof.
\end{proof}
\begin{corollary}
\label{djj887jejf}
Conjecture \ref{tue8eiru883} is true.
\end{corollary}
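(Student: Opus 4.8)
The plan is to turn the standing hypotheses into a contradiction. Recall the setting: $\ww_0$ is infinite and aperiodic, $\maxPL(\ww_0)<\infty$, and $\maxPPL(\ww_0)=\kk<\infty$, with $\ww=\pad(\ww_0)$. If one derives a contradiction from this, then no infinite aperiodic word can have finite $\maxPL$, so any infinite $\xx$ with $\PL(u)\le\kk$ for all factors $u$ satisfies $\maxPL(\xx)<\infty$ and hence (being disqualified from aperiodicity) is ultimately periodic, which is exactly Conjecture~\ref{tue8eiru883}. Lemma~\ref{dd554hye88rc} links the two notions, so it suffices to show $\maxPPL(\ww_0)=\infty$, i.e. to refute the assumption $\maxPPL(\ww_0)=\kk$.

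First I would apply Lemma~\ref{yyie9d33b} with the $\hh_0$ fixed by~(\ref{adhuf99j}) to obtain an ordinary palindromic factor $\zz$ of $\ww$ with $\bb\in\Prefix(\zz)$, $\order(\zz)<2$, and $\hh:=|\NPP(\zz)|\ge\hh_0$; from here the entire apparatus of Sections~\ref{sec8903jhd}--\ref{cbhd88e7jf} is available. Next I would establish the estimate~(\ref{ddh7923vbf}). For each $m\in\mathbb{N}(1,\kk)$, Lemma~\ref{ry88dbh3v4} supplies $M_m\subseteq\NestPer(m)$ with $|M_m|\le(c_3\hh)^m$ and $\Cover(m)\subseteq\widetilde\varphi(M_m)$; since $\widetilde\BB\in\UC$ by Proposition~\ref{duid0jfgh} and every $D\in\varphi(M_m)$ is an NPS cluster of degree $m$, Proposition~\ref{rr7tff5} gives $|\widetilde\BB\cap D|\le\psi(m)\le\lambda(\hh,m)$. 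Hence $|\widetilde\BB\cap\Cover(m)|\le(c_3\hh)^m\lambda(\hh,m)$, and because $(c_3\hh)^m\lambda(\hh,m)$ is increasing in $m$, summation yields $|\widetilde\BB\cap\bigcup_{m=1}^{\kk}\Cover(m)|\le\kk(c_3\hh)^{\kk}\lambda(\hh,\kk)$.

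The crux is to see that $\widetilde\BB$ is (up to a single position) covered by $\bigcup_{m=1}^{\kk}\Cover(m)$. Let $e\in\widetilde\BB$; then $\zz[e]=\bb$, and since $\bb\in\Prefix(\zz)$ the letters of $\zz$ alternate between $\bb$ and $\Sigma_0$ with $\bb$ exactly at the odd positions, so $e$ is odd. If $e=1$ then $e\in\Cover(0)=\{1\}$. If $e\ge 3$, then $\zz[2,e-1]$ is a nonempty factor of $\ww=\pad(\ww_0)$ beginning and ending with letters of $\Sigma_0$, hence $\zz[2,e-1]=\pad(v)$ for some factor $v$ of $\ww_0$; therefore $1\le\PPL(\zz[2,e-1])\le\maxPPL(\ww_0)=\kk$ and $e\in\Cover(\PPL(\zz[2,e-1]))$. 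Thus $\widetilde\BB\subseteq\{1\}\cup\bigcup_{m=1}^{\kk}\Cover(m)$. Since $\zz$ has $2^{\hh-1}$ base positions of $\zz_1$ (Section~\ref{cbhd88e7jf}), so $|\widetilde\BB|=2^{\hh-1}$, combining with the bound just derived gives
\[ 2^{\hh-1}=|\widetilde\BB|\le 1+\Bigl|\widetilde\BB\cap\bigcup_{m=1}^{\kk}\Cover(m)\Bigr|\le 1+\kk(c_3\hh)^{\kk}\lambda(\hh,\kk). \]
On the other hand $\hh\ge\hh_0$, so~(\ref{adhuf99j}) at $m=\kk$ gives $2^{\hh-1}>\kk(c_3\hh)^{\kk}\lambda(\hh,\kk)$; enlarging $\hh_0$ if necessary so that $2^{\hh-1}>1+\kk(c_3\hh)^{\kk}\lambda(\hh,\kk)$ for $\hh\ge\hh_0$ — harmless, since $\lim_{\hh\to\infty}\kk(c_3\hh)^{\kk}\lambda(\hh,\kk)/2^{\hh}=0$ — we get $2^{\hh-1}>1+\kk(c_3\hh)^{\kk}\lambda(\hh,\kk)\ge 2^{\hh-1}$, a contradiction.

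Therefore the standing assumption cannot hold: $\maxPPL(\ww_0)=\infty$, hence $\maxPL(\ww_0)=\infty$ by Lemma~\ref{dd554hye88rc}, and, as explained, Conjecture~\ref{tue8eiru883} follows. The genuine difficulty is entirely upstream of this assembly: constructing the nested periodic structures and proving $\Omega=\mathbb{N}_1$ (Corollary~\ref{yysbn44df}) and that $\widetilde\BB\in\UC$ (Proposition~\ref{duid0jfgh}). The final step itself is only the bookkeeping that contrasts the exponential size $2^{\hh-1}$ of $\widetilde\BB$ with the polynomial-in-$\hh$ bound on $|\widetilde\BB\cap\Cover(m)|$, together with the minor care needed for the position $1$ and the empty prefix.
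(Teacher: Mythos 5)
Your proposal is correct and follows the paper's own route almost verbatim: combine Proposition~\ref{duid0jfgh}, Lemma~\ref{ry88dbh3v4}, and Proposition~\ref{rr7tff5} to bound $\vert\widetilde\BB\cap\bigcup_{m=1}^{\kk}\Cover(m)\vert$ polynomially in $\hh$, then contrast with $\vert\widetilde\BB\vert=2^{\hh-1}$ via~(\ref{adhuf99j}) to contradict $\maxPPL(\ww_0)=\kk$. You are in fact a bit more careful than the paper, which silently glosses over the position $1\in\widetilde\BB$ (only caught by $\Cover(0)$, which is excluded from the union $m=1,\dots,\kk$); your explicit extra ``$+1$'' and the remark that $\hh_0$ may harmlessly be enlarged fills that small presentational gap.
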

\begin{proof}
    From Proposition \ref{duid0jfgh} we have that \(\widetilde\BB\in\UC\). 
    From Lemma \ref{ry88dbh3v4} we have that there is \(M\subseteq \NestPer(\kk)\) such that \(\vert M\vert\leq \kk(c_3\hh)^{\kk}\) and \(\bigcup_{m=1}^{\kk}\Cover(m)\subseteq \widetilde\varphi(M)\).
    It follows then from Proposition \ref{rr7tff5} that  \[\vert\widetilde\BB\cap\bigcup_{m=1}^{\kk}\Cover(m)\vert\leq \vert\widetilde\BB\cap \widetilde\varphi(M)\vert\leq \kk(c_3\hh)^{\kk}\psi(\kk)\leq \kk(c_3\hh)^{\kk}\lambda(\hh,\kk)\mbox{.}\] Then (\ref{adhuf99j}) implies that \(\widetilde\BB\cap\bigcup_{m=1}^{\kk}\Cover(m)\subset \widetilde\BB\). 
    This is a contradiction to the assumption \(\maxPPL(\ww_0)\leq \kk\) and due to Lemma \ref{dd554hye88rc} it is a contradiction to \(\maxPL(\ww_0)\leq \kk\). The corollary follows. This completes the proof.
\end{proof}

\bibliographystyle{siam}
\IfFileExists{biblio.bib}{\bibliography{biblio}}{\bibliography{../!bibliography/biblio}}

\begin{thebibliography}{10}

\bibitem{AMBROZ201974}
{\sc P.~Ambro{\v{z}}, O.~Kadlec, Z.~Mas{\'a}kov{\'a}, and E.~Pelantov{\'a}},
  {\em Palindromic length of words and morphisms in class {P}}, Theoretical
  Computer Science, 780 (2019), pp.~74 -- 83.

\bibitem{borozdin_et_al:LIPIcs:2017:7338}
{\sc K.~Borozdin, D.~Kosolobov, M.~Rubinchik, and A.~M. Shur}, {\em Palindromic
  length in linear time}, in 28th Annual Symposium on Combinatorial Pattern
  Matching (CPM 2017), J.~K{\"a}rkk{\"a}inen, J.~Radoszewski, and W.~Rytter,
  eds., vol.~78 of Leibniz International Proceedings in Informatics (LIPIcs),
  Dagstuhl, Germany, 2017, Schloss Dagstuhl--Leibniz-Zentrum fuer Informatik,
  pp.~23:1--23:12.

\bibitem{BucMichGreedy2018}
{\sc M.~Bucci and G.~Richomme}, {\em Greedy palindromic lengths}, International
  Journal of Foundations of Computer Science, 29 (2018), pp.~331--356.

\bibitem{10.1007/978-3-031-05578-2_6}
{\sc D.~Bulgakova, A.~Frid, and J.~Scanvic}, {\em Prefix palindromic length
  of the sierpinski word}, in Developments in Language Theory, V.~Diekert and
  M.~Volkov, eds., Cham, 2022, Springer International Publishing, pp.~78--89.

\bibitem{FICI201441}
{\sc G.~Fici, T.~Gagie, J.~Kärkkäinen, and D.~Kempa}, {\em A subquadratic
  algorithm for minimum palindromic factorization}, Journal of Discrete
  Algorithms, 28 (2014), pp.~41 -- 48.
\newblock StringMasters 2012 \& 2013 Special Issue (Volume 1).

\bibitem{WILF1965}
{\sc N.~J. Fine and H.~S. Wilf}, {\em Uniqueness theorems for periodic
  functions}, Proceedings of the American Mathematical Society, 16 (1965),
  pp.~109--114.

\bibitem{FrPuZa}
{\sc A.~Frid, S.~Puzynina, and L.~Zamboni}, {\em On palindromic factorization
  of words}, Adv. Appl. Math., 50 (2013), pp.~737--748.

\bibitem{FRID2018202}
{\sc A.~E. Frid}, {\em Sturmian numeration systems and decompositions to
  palindromes}, European Journal of Combinatorics, 71 (2018), pp.~202 -- 212.

\bibitem{10.1007/978-3-319-07566-2_16}
{\sc T.~I, S.~Sugimoto, S.~Inenaga, H.~Bannai, and M.~Takeda}, {\em Computing
  palindromic factorizations and palindromic covers on-line}, in Combinatorial
  Pattern Matching, A.~S. Kulikov, S.~O. Kuznetsov, and P.~Pevzner, eds., Cham,
  2014, Springer International Publishing, pp.~150--161.

\bibitem{10.1007/978-3-662-46078-8_24}
{\sc D.~Kosolobov, M.~Rubinchik, and A.~M. Shur}, {\em Pal$^k$ is linear
  recognizable online}, in SOFSEM 2015: Theory and Practice of Computer
  Science, G.~F. Italiano, T.~Margaria-Steffen, J.~Pokorn{\'y}, J.-J.
  Quisquater, and R.~Wattenhofer, eds., Berlin, Heidelberg, 2015, Springer
  Berlin Heidelberg, pp.~289--301.

\bibitem{Lothaire_2002}
{\sc M.~Lothaire}, {\em Algebraic Combinatorics on Words}, Encyclopedia of
  Mathematics and its Applications, Cambridge University Press, 2002.

\bibitem{RuSh15}
{\sc M.~Rubinchik and A.~M. Shur}, {\em EERTREE: An Efficient Data Structure
  for Processing Palindromes in Strings}, Springer International Publishing,
  Cham, 2016, pp.~321--333.

\bibitem{10.1007/978-3-030-62536-8_14}
{\sc J.~Rukavicka}, {\em Palindromic length of words with many periodic
  palindromes}, in Descriptional Complexity of Formal Systems,
  G.~Jir{\'a}skov{\'a} and G.~Pighizzini, eds., Cham, 2020, Springer
  International Publishing, pp.~167--179.

\bibitem{RUKAVICKA2022106}
{\sc J.~Rukavicka}, {\em Palindromic length and reduction of powers},
  Theoretical Computer Science, 930 (2022), pp.~106--115.

\bibitem{10.1007/978-3-319-66396-8_19}
{\sc A.~Saarela}, {\em Palindromic length in free monoids and free groups}, in
  Combinatorics on Words, S.~Brlek, F.~Dolce, C.~Reutenauer, and
  {\'E}.~Vandomme, eds., Cham, 2017, Springer International Publishing,
  pp.~203--213.

\end{thebibliography}

\end{document}